\documentclass[11pt,a4paper]{article}

\usepackage{tikz,tkz-euclide,tikz-cd}
\usepackage{amsmath,amssymb,amsfonts,amsthm,amsrefs}

\usepackage{graphicx}
\usepackage[colorlinks=true,
            linkcolor=blue,
            citecolor=blue,
            urlcolor=blue,
            pdfborder={0 0 0}]{hyperref}
\usepackage{geometry}
\usepackage{booktabs,multirow}
\usepackage[ruled,vlined]{algorithm2e}

\usepackage{array, longtable}
\usepackage{enumerate}
\usepackage{todonotes}
\usepackage{xypic}
\usepackage{mathtools}
\usepackage{enumitem}

\newtheorem{introthm}{Theorem}

\newtheorem{theorem}{Theorem}[section]
\newtheorem{proposition}[theorem]{Proposition}
\newtheorem{lemma}[theorem]{Lemma}

\newtheorem{conjecture}[theorem]{Conjecture}

\theoremstyle{definition}

\newtheorem{example}[theorem]{Example}

\theoremstyle{remark}
\newtheorem{remark}[theorem]{Remark}

\numberwithin{equation}{section}

\begin{document}

\title{On Cox Rings of Calabi--Yau hypersurfaces}
\author{Michela Artebani, Antonio Laface, and Luca Ugaglia}
\date{\today}

\maketitle


\begin{abstract}
We study the Cox rings of smooth anticanonical Calabi–Yau hypersurfaces in smooth toric Fano varieties. Using the combinatorics of primitive pairs of the ambient Fano polytope~\cite{bat} and the description of Cox rings of embedded varieties via localizations~\cite{hlu}, we identify several configurations for which the hypersurface is a Mori dream space and obtain explicit presentations of its Cox ring. We also exhibit combinatorial configurations forcing the birational automorphism group to be infinite, yielding in dimensions three and four a dichotomy between finite generation of the Cox ring and infinite birational automorphism group. Finally, for a class of non–Mori dream examples, we prove the Morrison–Kawamata cone conjecture for the movable cone.
\end{abstract}

\maketitle

\begingroup
\renewcommand{\thefootnote}{}
\footnotetext{
\textit{2020 Mathematics Subject Classification.}
Primary 14C20, 14M25; Secondary 14J32, 14E30, 14E07, 14J28, 14Q15.

\textit{Key words and phrases.}
Cox rings, Mori dream spaces, Calabi--Yau hypersurfaces, toric Fano varieties, primitive collections, birational automorphisms, movable cone, Morrison--Kawamata cone conjecture, K3 surfaces.
}
\endgroup
\setcounter{footnote}{0}

\section*{Introduction}
The Cox ring of a projective variety encodes in a single multigraded algebra the spaces of sections of all divisor classes, and its finite generation has strong consequences for birational geometry. By the foundational work of Hu and Keel, finite generation of the Cox ring is equivalent, under mild hypotheses, to the variety being a Mori dream space; in that case the effective cone admits a finite chamber decomposition, every divisor runs a Mori program, and the birational models of the variety are controlled by variation of GIT quotients \cite{hk}. From this perspective, Cox rings provide a natural algebraic framework in which to study the birational geometry of higher-dimensional varieties.

A broad and remarkably well behaved class is given by varieties of Fano type. By the results of Birkar--Cascini--Hacon--McKernan, varieties of Fano type are Mori dream spaces, and hence have finitely generated Cox rings \cite{bchm}. In particular, smooth toric Fano varieties, whose Cox rings are polynomial rings, furnish the basic testing ground for explicit computations. This suggests a natural next problem: to understand how much of this picture survives when one passes from the Fano world to the Calabi--Yau world. From the point of view of Cox rings, Calabi--Yau varieties are among the first classes for which finite generation becomes genuinely subtle and cannot be expected in general.

This transition is also natural from the perspective of the minimal model program. As emphasized in McKernan's survey on Mori dream spaces \cite{mk}, finite generation should be seen as a global form of birational control: it packages the outputs of the MMP for all divisors into finitely many chambers. For Calabi--Yau varieties, however, birational geometry is expected to be governed not by the rational polyhedrality of the nef or movable cones themselves, but rather by the action of automorphism and birational automorphism groups on those cones. This is the philosophy of the Morrison--Kawamata cone conjecture \cites{mor,kaw, lop}. Thus, from the Cox ring viewpoint, Calabi--Yau varieties are special precisely because they lie at the boundary between finite generation and infinite birational symmetry.

In this paper we study this problem for smooth anticanonical hypersurfaces in smooth toric Fano varieties. Let $\Delta$ be a smooth Fano polytope, let $\mathbb P_{\Delta}$ be the associated smooth toric Fano variety, and let
\[
X \in |-K_{\mathbb P_{\Delta}}|
\]
be a general anticanonical hypersurface (very general if $\dim(X)=2$). Then $X$ is a smooth Calabi--Yau variety. Our goal is to relate the finite generation of the Cox ring of $X$ to the combinatorics of the  polytope $\Delta$, and more precisely to the primitive pairs of $\Delta$ \cite{bat}.

The starting point is the general method developed in \cite{hlu}, which describes the Cox ring of an embedded variety as an intersection of localizations of the Cox ring of the ambient Mori dream space. In the present setting, the codimension two components of the irrelevant locus of $\mathbb P_{\Delta}$ are in bijection with primitive pairs of $\Delta$. A first key observation is that the multiplicity with which a general anticanonical equation vanishes along such a codimension two stratum is exactly the degree of the corresponding primitive pair. This allows one to read the structure of the Cox ring of $X$ directly from the primitive relations of $\Delta$.

Our first main result identifies several configurations of primitive pairs for which $X$ is a Mori dream space and gives an explicit presentation of the Cox ring.

\begin{introthm}
\label{thm:1}
Let $\Delta$ be a smooth Fano polytope of dimension $n\geq 3$, 
with vertices
$v_1,\dots,v_r$, and let $\mathbb{P}_\Delta$ be the corresponding toric variety. 
Fix a general anticanonical hypersurface
(very general if $n=3$)
\[
X=\{f(x_1,\dots,x_r)=0\}\subseteq \mathbb{P}_\Delta,
\]
where $x_i$ is the generator of the Cox ring corresponding to the vertex $v_i$.  
If the primitive pairs of 
$\Delta$ are exactly those appearing
in the following table, then 
$X$ is a Mori dream space and its
Cox ring is as follows.

{\footnotesize
\begin{center}
\begin{tabular}{c|c|c|c}
\toprule
& Relations & Equation & $\mathcal R(X)$ \\ 
\midrule
(i) & $v_1+v_2=v_3$
& $f_1x_1+f_2x_2$ &
$\displaystyle
\frac{\mathbb{K}[x_1,\dots,x_r,s]}
     {\langle\,f_2 - s\,x_1,\;f_1 + s\,x_2\,\rangle}
$ 
\\[15pt]
(ii) & $v_1+v_2=0$
& $f_1x_1^2+f_2x_1x_2+f_3x_2^2$&
$\displaystyle
\frac{\mathbb{K}[x_1,\dots,x_r,s_1,s_2]}
     {\langle\,f_3 - x_1\,s_1,\;f_2 + x_1\,s_2 + x_2\,s_1,\;f_1 - x_2\,s_2\,\rangle}
$ 
\\[15pt]
(iii) & $\begin{array}{l}
 v_1+v_2 = 0,\\   
 v_1+v_4 =v_3\\
 v_2+v_3 = v_4
 \end{array}$
& $f_1\,x_1^{2}x_3 + f_2\,x_1x_2 + f_3\,x_2^{2}x_4$ &
$\displaystyle
\frac{\mathbb{K}[x_1,\dots,x_r,s_1,s_2]}
      {\langle f_3-x_1s_1,\ 
      f_2+x_1x_3s_2+x_2x_4s_1,\ 
      f_1-x_2s_2\rangle}$
\\[20pt]
(iv) & 
$\begin{array}{l}
v_1+v_2=v_5,\\ 
v_3+v_4 = v_6
\end{array}$
&
$f_1\,x_1x_3 + f_2\,x_1x_4 + f_3\,x_2x_3 + f_4\,x_2x_4$
&
$\displaystyle
\frac{\mathbb{K}[x_1,\dots,x_r,s_1,s_2]}
{\left\langle
\begin{array}{c}
f_1f_4 - f_2f_3 + s_1s_2,\ 
    x_2s_2 + x_3f_1 + x_4f_2,\\[2pt]    x_1s_2 - x_3f_3 - x_4f_4,\ 
    x_1f_2 + x_2f_4 - x_3s_1,\\[2pt]
    x_1f_1 + x_2f_3 + x_4s_1
\end{array}
\right\rangle}$
\\[15pt]
\bottomrule
\end{tabular}
\end{center}
}
\end{introthm}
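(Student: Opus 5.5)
The approach is the localization method of \cite{hlu}. Since $\mathbb P_\Delta$ is a smooth toric Fano variety of dimension $n\geq 3$ and $X$ is general (very general if $n=3$), a Lefschetz-type argument gives $\operatorname{Cl}(X)\cong\operatorname{Cl}(\mathbb P_\Delta)$ through restriction. It also shows that the restrictions of the $x_i$ are prime in the Cox ring of $X$. Hence $\mathcal R(X)$ is the saturation
\[
\mathcal R(X)=\bigcap_{i}\big(\mathcal R(\mathbb P_\Delta)/\langle f\rangle\big)_{x_i}\cap \cdots,
\]
that is, the ring of sections of $X$ minus the irrelevant locus, whose relevant part is the codimension two strata. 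By the observation in the introduction, these strata $\{x_a=x_b=0\}$ correspond to the primitive pairs $\{v_a,v_b\}$ of $\Delta$. The multiplicity of $f$ along such a stratum equals the degree of the pair. The strategy is therefore: (1) use the listed primitive relations to write $f$ in the shape given in the ``Equation'' column; (2) produce the extra generators $s$ or $s_j$ as sections on $X$; (3) show that the proposed ring $R$ is normal and has the correct dimension, so that $R\cong\mathcal R(X)$ via the criterion of \cite{hlu}.

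For step (1), every monomial of degree $-K$ lies in the ideal $\langle x_a,x_b\rangle^{d}$, where $d$ is the degree of the primitive relation. This yields the expansions in the table: degree $1$ in (i), degree $2$ in (ii). In (iii) and (iv) several relations interact, which gives the mixed forms. For step (2), on $X$ we have, for instance in (i), $f_1x_1=-f_2x_2$, and $x_1,x_2$ form a regular sequence. So $s:=f_2/x_1=-f_1/x_2$ is a rational section that is regular off $\{x_1=x_2=0\}$. That set is irrelevant, so $s$ is a genuine element of $\mathcal R(X)$. In (ii) the quadratic form $f_1x_1^2+f_2x_1x_2+f_3x_2^2$ vanishes, and solving the syzygy produces $s_1,s_2$. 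In (iv) $f$ is bilinear in $(x_1,x_2)$ and $(x_3,x_4)$, so $X$ lies in a determinantal-type configuration. The sections $s_1,s_2$ come from the $2\times2$ minors, and the listed ideal is built from the Pfaffians and minors of the associated matrix. In each case I would check directly that the listed relations hold.

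Step (3) is the main obstacle. We must show that the map $R\to\mathcal R(X)$ is an isomorphism. The plan is to check three things. First, $R$ is an integral domain of dimension $\dim X+\operatorname{rk}\operatorname{Cl}(X)$. Second, $R$ localized at each $x_i$ agrees with $(\mathcal R(\mathbb P_\Delta)/\langle f\rangle)_{x_i}$; this is immediate, since the new generators become redundant after inverting $x_1$ or $x_2$. Third, the $x_i$ are pairwise coprime primes in $R$. By \cite{hlu} these three conditions force equality.

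The first condition requires $R$ to be a complete intersection, or Gorenstein codimension $3$ in case (iv) (five Pfaffians), of the expected dimension. Irreducibility and normality would then follow from Serre's criterion, using that the $f_i$ are general. For the third condition, primality of $x_i$ in $R$ amounts to irreducibility of $R/\langle x_i\rangle$. For $x_1$ in case (i), this quotient is $\mathbb K[\ldots,s]/\langle f_2,\ f_1+sx_2\rangle$. I would establish it by a genericity and dimension count on the $f_i$, using the remaining primitive collections to guarantee that the $f_i$ restricted to the boundary strata are general enough. The delicate cases are (iii) and (iv), where several relations overlap, and I expect the most care to be needed there.
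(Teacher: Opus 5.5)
Your architecture matches the paper's. You use \cite{hlu} to write $\mathcal R(X)$ as an intersection of localizations of $\mathbb K[x]/\langle f\rangle$, and adjoin the fractions $s_j$. You show the presented ring is a complete intersection (in (iv), a codimension-three Pfaffian ideal, via Buchsbaum--Eisenbud), deduce $S_2$, domain and $R_1$, and conclude by Hartogs. Requiring $x_a,x_b$ to be non-associated primes is a legitimate substitute for the paper's codimension hypothesis, since two distinct prime divisors meet in codimension $\ge 2$.

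\textbf{The gap: the deferred step.} The substance of the theorem lies in the step you defer to ``a genericity and dimension count'', and genericity alone cannot supply it. Take case (i). $\operatorname{Spec}R/\langle x_1\rangle$ contains $V(x_1,x_2,f_1,f_2)\times\mathbb A^1_s$, which has the full dimension $r-2$ unless $\operatorname{codim}_{\mathbb K^r}V(x_1,x_2,f_1,f_2)=4$. So your primality condition, like the paper's codimension condition, holds only if the restrictions of $f_1,f_2$ to $x_1=x_2=0$ have no common factor. Since $f_1,f_2$ are general in linear systems spanned by fixed monomials, the only possible common factor is a coordinate $x_i$. No genericity argument removes such a fixed component.

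The paper excludes it combinatorially with Lemma~\ref{lem:unique-vertex-pair}. For each $i\ge 3$ there is $m\in\Delta^\circ\cap M$ with $\langle m,v_1\rangle=0$ and $\langle m,v_2\rangle=\langle m,v_i\rangle=-1$. This gives a monomial of $f_1$ outside $\langle x_1,x_2,x_i\rangle$, and it needs $v_3$ to lie in no primitive pair. In (ii) and (iii), a failure would put $f_1,f_2,f_3$ in some $\langle x_i,x_j\rangle$ and so create an extra primitive pair $\{v_i,v_j\}$. In (iv), one first reduces a non-monomial gcd to a monomial one, then applies the lemma to $v_1+v_2=v_5$ and $v_3+v_4=v_6$.

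This is exactly where the hypothesis that the listed pairs are \emph{all} the primitive pairs enters. Example~\ref{ex:117} shows it is not a formality. There are two degree-one pairs, but $v_1+v_2=v_3$ lies in the other pair, so the lemma is unavailable and a third generator $s_3$ is needed.

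\textbf{A smaller error: localizations in (iii) and (iv).} In these cases the new generators are \emph{not} redundant after inverting $x_1$ or $x_2$ alone.
\begin{itemize}
\item In (iv), $s_1=(x_1f_2+x_2f_4)/x_3$ has a simple pole along the divisor $V(x_3,x_4)\subseteq\tilde X$, which meets $D(x_1)$. You must invert $x_1x_3$.
\item In (iii), $s_2=f_1/x_2$ has a pole along $V(x_2,x_3)$ inside $D(x_1)$. You must invert $x_1x_2$.
\item For $i\ge 3$, $s\notin(\mathbb K[x]/\langle f\rangle)_{x_i}$ already in case (i).
\end{itemize}
The right comparison is with the localizations at the monomials generating the codimension-two irrelevant ideal. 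Consequently, regularity in codimension one along the divisors $V(x_1),V(x_2)$ in (iii), and $V(x_1),V(x_3)$ in (iv), is not inherited from $\mathbb K[x]/\langle f\rangle$ and has to be checked separately.
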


Cases $(i)$ and $(ii)$ of  
Theorem~\ref{thm:1} generalize \cite[Cor.~5.1]{hlu}, which computes the Cox ring of a general anticanonical Calabi–Yau hypersurface in a smooth toric Fano variety with Picard rank~$2$. In that case, the irrelevant locus of the ambient toric variety has at most one component of codimension two.

Our second main result identifies configurations which force the opposite behaviour.

\begin{introthm}\label{thm:2}
Let $\Delta$ be a smooth Fano polytope of dimension $n\geq 3$, let $\mathbb{P}_\Delta$ be the corresponding toric variety, and let $X \subseteq \mathbb{P}_\Delta$ be a smooth general anticanonical hypersurface (very general if $n=3$). 
Assume that $\Delta$ contains four distinct vertices $v_1,v_2,v_3,v_4$ such
that one of the following holds:
\begin{itemize}
    \item[i)] $v_1 + v_2 = v_3 + v_4 = 0$, or
    \item[ii)] $v_1 + v_2 = 0$ and  $v_3 + v_4 = v_1$.
\end{itemize}
Then the image of $\mathrm{Bir}(X)$ in 
\(\operatorname{GL}(N^1(X)_\mathbb R)\) is infinite. In particular, \(X\) is not a Mori dream space.
\end{introthm}

Case $i)$ of Theorem~\ref{thm:2} extends the known non--Mori dream space phenomena for Calabi--Yau hypersurfaces in products of projective spaces.
Indeed, let $\mathbb{P}=(\mathbb{P}^1)^m\times \mathbb{P}^{n_1}\times\cdots\times \mathbb{P}^{n_k}$
with $n_j\ge 2$, and let $\Delta$ be the smooth Fano polytope associated with $\mathbb{P}$.
For each $\mathbb{P}^1$-factor, $\Delta$ contains a primitive pair of degree~$2$; hence if $m\ge 2$
there are at least two such pairs and Theorem~\ref{thm:2} applies to a (very) general
anticanonical hypersurface $X\subset \mathbb{P}$, yielding $\mathrm{Bir}(X)$ is infinite.
This recovers the source of the infinite birational symmetry appearing in Kawamata's example 
$\mathbb{P}=\mathbb{P}^1\times\mathbb{P}^1\times\mathbb{P}^2$ and is consistent with Ottem's result
that when $m>1$ the group of pseudo-automorphisms is infinite
and the movable cone is not rational polyhedral \cite{ott}. From the perspective discussed above, this is precisely the type of phenomenon predicted by the Morrison--Kawamata philosophy: the failure of finite generation is accompanied by the presence of large birational symmetry.  

For Calabi--Yau hypersurfaces in smooth toric Fano varieties of ambient dimensions $3$ and $4$, our results yield a dichotomy between finite generation of the Cox ring and infinite birational automorphism group. 
We make this classification explicit using the standard indexing of the \emph{Graded Ring Database} 
(\url{http://www.grdb.co.uk/forms/toricsmooth}). 
The cases not covered directly by Theorems~\ref{thm:1} and~\ref{thm:2} are settled by two additional arguments. 
Most of them are handled by the auxiliary algorithm \textsc{TestFace} (Alg.~\ref{alg}), which certifies the polyhedrality of the effective cone by testing its facets on suitable surface sections via the Hodge index theorem. 
The remaining exceptional case is treated by an explicit Cox ring computation in Example~\ref{ex:117}. 

\begin{introthm}
\label{cor:1}
Let $\Delta$ be a smooth Fano polytope of dimension $n\in\{3,4\}$, let 
$\mathbb{P}_\Delta$ be the associated toric variety, and let 
$X\subseteq\mathbb{P}_\Delta$ be a smooth general anticanonical hypersurface, 
very general when $n=3$. Then either $X$ is a Mori dream space, or 
$\Delta$ contains four distinct vertices $v_1,v_2,v_3,v_4$ appearing in one of 
the two configurations described in Theorem~\ref{thm:2}; in this latter case the 
birational automorphism group $\mathrm{Bir}(X)$ is infinite.
The following table records whether the Calabi--Yau hypersurface associated 
with each index is a Mori dream space.
\begin{table}[h]
\centering
\small
\renewcommand{\arraystretch}{1.2}
\begin{tabular}{@{} l l p{2.5cm} p{7.5cm} @{}}
\toprule
\textbf{MDS} & \textbf{Method} & \textbf{Dimension 3} & \textbf{Dimension 4} \\
\midrule

\multirow{7}{*}{Yes}
 & No primitive pairs 
   & 23 
   & 44, 70, 141, 146, 147 \\[0.2em]

 & Thm.~\ref{thm:1} (i)
   & 19 
   & 40, 60, 69, 137, 138 \\[0.2em]

 & Thm.~\ref{thm:1} (ii)
   & 7, 20, 22 
   & 25, 41, 64, 139, 144, 145 \\[0.2em]

 & Thm.~\ref{thm:1} (iii)
   & 6, 16 
   & 24, 36, 65, 127, 128 \\[0.2em]

 & Thm.~\ref{thm:1} (iv)
   & 
   & 53, 55, 97, 109 \\[0.2em]

 & Alg.~\ref{alg}
   & 
   & 33, 34, 35, 38, 54, 93, 94, 104, 110, 132, 133 \\[0.2em]

 & Ex.~\ref{ex:117}
   & 
   & 117 \\

\midrule

\multirow{6}{*}{No}
 & Thm.~\ref{thm:2} (i)
   & 21
   & \begingroup\footnotesize
     37, 42, 52, 59, 61, 62, 63, 66, 126, 140, 142, 143
     \endgroup \\[0.6em]

 & Thm.~\ref{thm:2} (ii)
   & 11, 12, 18
   & \begingroup\footnotesize
     29, 30, 31, 39, 43, 47, 49, 50, 58, 68, 74, 75, 83, 96, 105, 108, 111, 114, 115, 116, 129, 134, 135, 136
     \endgroup \\[0.6em]
 & Thm.~\ref{thm:2} (i) and (ii)
   & 8, 9, 10, 13, 14, 15, 17
   & \begingroup\footnotesize
     26, 27, 28, 32, 45, 46, 48, 51, 56, 57, 67, 71, 72, 73, 76, 77, 78, 79, 80, 81, 82, 84, 85, 86, 87, 88, 
89, 90, 91, 92, 95, 98, 99, 100, 101, 102, 103, 106, 107, 112, 113, 118, 119, 120, 121, 122, 123, 124, 125, 130, 131
     \endgroup \\

\bottomrule
\end{tabular}
\caption{Hypersurfaces in smooth Fano toric varieties of dimensions $3$ and $4$.}
\label{tab:34}
\end{table}
\end{introthm}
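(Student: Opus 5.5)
The proof will be a case-by-case verification over the finite list of smooth Fano polytopes of dimension $3$ and $4$: there are $18$ in dimension $3$ (indices $6$--$23$ in the Graded Ring Database numbering) and $124$ in dimension $4$ (indices $24$--$147$). For each polytope I will compute its primitive collections and primitive relations, which the database provides, and keep only the primitive pairs $\{v_i,v_j\}$ together with their relations $v_i+v_j=\dots$. Since a smooth Fano polytope has primitive relations of degree at most $2$ on pairs, each pair has one of three forms: $v_i+v_j=0$ (degree $2$), $v_i+v_j=v_k$ (degree $1$), or $v_i+v_j=v_k+v_l$ (degree $0$).

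The first step is to sort the polytopes by the shape of their set of primitive pairs.

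\begin{enumerate}
\item[(a)] If there are no primitive pairs, the irrelevant locus has no codimension two components. By the localization description of \cite{hlu}, $\mathcal R(X)$ is then the quotient $\mathcal R(\mathbb P_\Delta)/\langle f\rangle$, and $X$ is a Mori dream space. One also needs the Lefschetz-type identification $\operatorname{Pic}(X)\cong\operatorname{Pic}(\mathbb P_\Delta)$ for $n\ge 4$, and for very general $X$ when $n=3$ (Noether--Lefschetz for K3 surfaces).
\item[(b)] If the primitive pairs are exactly one of the configurations (i)--(iv), Theorem~\ref{thm:1} gives the Cox ring directly.
\item[(c)] If some four distinct vertices satisfy configuration (i) or (ii) of Theorem~\ref{thm:2}, then $\mathrm{Bir}(X)$ has infinite image in $\operatorname{GL}(N^1(X)_\mathbb R)$. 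Since a Mori dream space has only finitely many small $\mathbb Q$-factorial modifications and finite image of $\mathrm{Bir}$, $X$ is not a Mori dream space.
\end{enumerate}

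This sorting is a mechanical scan of the relations, and I expect it to reproduce the rows of Table~\ref{tab:34}. The dichotomy statement then follows once every index lands in (a), (b) or (c), or is handled by the residual arguments below.

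The main obstacle is the residual list: 4-dimensional polytopes whose primitive pairs are neither a pattern of Theorem~\ref{thm:1} nor contain a configuration of Theorem~\ref{thm:2}. Examples are several degree-$1$ pairs in non-disjoint position, or a single degree-$2$ pair combined with degree-$1$ pairs not matching (iii). For these I would prove the Mori dream property directly using \cite{hlu}:

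\begin{enumerate}
\item First show that $\operatorname{Eff}(X)$ is rational polyhedral, generated by the classes of the $x_i|_X$ and of the new generators $s$ arising from the codimension two strata.
\item Certify this facet by facet with the algorithm \textsc{TestFace}. For each candidate facet, restrict to a surface section of $X$ cut out by general ample divisors. Use the Hodge index theorem to show that classes beyond the facet have negative self-intersection against the section, which would force them to be supported on finitely many curves, so no effective class lies beyond.
\item Conclude that $\operatorname{Mov}(X)$ and $\operatorname{Nef}(X)$ decompose into finitely many chambers. Together with $h^1(\mathcal O_X)=0$ and Picard rank equal to that of the ambient space, the Hu--Keel criterion then gives finite generation, or equivalently the finiteness of the intersection of localizations in \cite{hlu}.
\end{enumerate}

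Where the algorithm fails (index $117$), I would instead compute the Cox ring explicitly. I would adjoin the saturation generators along each codimension two stratum, up to the order given by the degree of the primitive pair, as in the proof of Theorem~\ref{thm:1}. I would then verify in Magma or Macaulay2 that the resulting ring is normal, of the correct dimension, and that its localizations agree with those of $\mathcal R(\mathbb P_\Delta)/\langle f\rangle$ along the irrelevant strata.
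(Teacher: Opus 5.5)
Your overall architecture matches the paper's. You use a computer scan that sorts the polytopes into ``no primitive pairs'' (where $\mathcal R(X)=\mathcal R(\mathbb P_\Delta)/\langle f\rangle$), Theorem~\ref{thm:1} and Theorem~\ref{thm:2}. Then \textsc{TestFace} handles the residual fourfolds $33,34,35,38,54,93,94,104,110,132,133$, and an explicit computation handles $117$.

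The genuine gap is in how you run the residual step. You cut the test surface out by a general \emph{ample} divisor, and this fails. In \textsc{TestFace} the surface is $S=X\cap Y$ with $Y\in|A|$, where $A$ spans an extremal ray of $\mathrm{Mov}(\mathbb P_\Delta)$. Such an $A$ is movable, which is enough for $S$ to be smooth and irreducible and for effective divisors on $X$ to restrict to effective ones. But it is not ample, and this is exactly what makes the Gram matrices $(A\cdot X\cdot\omega_i\cdot\omega_j)$ degenerate enough to be negative semidefinite.

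Concretely, take index $35$ and the facet $\langle D_3,D_4\rangle$ of $\mathcal C$. There $D_3^2=0$ in $A^*(\mathbb P_\Delta)$. For $A$ ample, $A\cdot X\cdot D_3\cdot D_4>0$, because $v_3,v_4$ span a cone, so $X\cdot D_3\cdot D_4$ is a nonzero effective curve. The matrix $\begin{pmatrix}0&b\\ b&c\end{pmatrix}$ with $b>0$ is indefinite, so this facet can never be certified. The paper instead takes $A=D_6\sim D_3$ and gets $\begin{pmatrix}0&0\\ 0&-2\end{pmatrix}$.

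Your Hodge-index mechanism is also not the right one. A negative self-intersection on $S$ does not prevent a class from being effective; think of exceptional curves. Proposition~\ref{pro:eff} uses instead that a nonzero negative semidefinite subspace $\langle\iota^*F\rangle$ lies in $\alpha_F^{\perp}$ for some nef class $\alpha_F$ on $S$. Then $D\mapsto\alpha_F\cdot\iota^*D$ is nonnegative on $\mathrm{Eff}(X)$ and vanishes on $F$, and doing this for every facet gives $\mathcal C=\mathrm{Eff}(X)$. The paper also allows certifying $\sigma^*(F)$ in place of $F$, which is legitimate since $\sigma^*$ preserves $\mathrm{Eff}(X)$.

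Your step 3 is asserted rather than proved. Polyhedrality of $\mathrm{Eff}(X)$ does not by itself give a finite decomposition of $\mathrm{Mov}(X)$ into nef cones of small modifications generated by semiample classes, and that is what Hu--Keel needs. The missing input is that a Calabi--Yau variety of dimension at most three is a Mori dream space if and only if its effective cone is rational polyhedral \cite[Cor.~4.5]{mk}. This rests on the MMP in dimension three and applies because $\dim X=3$ when $n=4$.

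For index $117$ you do not need to verify normality of an explicit presentation. By Proposition~\ref{pro:gens}, once $V(x_1,x_2)$ and $V(x_3,x_4)$ have codimension $2$, $\mathcal R(X)$ is the normalization of a finitely generated algebra. Note also that the needed $s_3$ is an iterated fraction whose numerator involves $s_1$, not a first-order generator attached to a single stratum.

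Finally, degree-$0$ primitive pairs cannot occur, since $d(\mathcal P)=-K_{\mathbb P_\Delta}\cdot C_{\mathcal P}>0$.
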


In the final part of the paper, we explore the validity of the Morrison--Kawamata cone conjecture for the movable cone of some non-Mori dream Calabi--Yau hypersurfaces in our setting. 
Proposition~\ref{prop:cone-conjecture-from-two-involutions} shows that the Morrison--Kawamata cone conjecture holds if $X$ carries two pseudo-automorphisms of order two whose reflecting walls are two facets of ${\rm Nef}(X)$, whose composition is unipotent and fixes a common nef ray, and if all remaining facets of ${\rm Nef}(X)$ already belong to $\partial{\rm Mov}(X)$. Under these hypotheses ${\rm Mov}(X)$ is tiled by translates of ${\rm Nef}(X)$ under the group generated by the two involutions. In particular, this yields a rational polyhedral fundamental domain for the action of ${\rm Bir}(X)$ on the movable cone.
 A key point here is that, 
 via the identification $N^1(Z)_{\mathbb R}\simeq N^1(X)_{\mathbb R}$, ${\rm Nef}(Z)={\rm Nef}(X)$ \cite{bor},
 and one can  determine precisely which facets of such cone contribute to the boundary of the movable cone of $X$ (see Proposition \ref{prop:facet-nef-to-mov}).
 As a consequence we can prove the following.
 \begin{introthm}\label{thm:cone}
Let $\mathbb P_\Delta$ be a smooth toric Fano variety of dimension $n\geq 4$
isomorphic to
\[
\mathbb P^1\times \mathbb P_{\mathbb P^{n-2}}(\mathcal O\oplus\mathcal O(i)),
\]
where $0\leq i\leq n-2$, and let $X\in |-K_{\mathbb P_\Delta}|$ be a smooth general member.
Then the Morrison--Kawamata cone conjecture holds for $X$.
\end{introthm}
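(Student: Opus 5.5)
The plan is to apply Proposition~\ref{prop:cone-conjecture-from-two-involutions}. Its hypotheses are: two pseudo-automorphisms of order two whose reflecting walls are facets of ${\rm Nef}(X)$, with unipotent composition fixing a common nef ray, and all other facets of ${\rm Nef}(X)$ lying in $\partial{\rm Mov}(X)$.

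\emph{Setting up the combinatorics.} Write $\mathbb P_\Delta=\mathbb P^1\times Y$ with $Y=\mathbb P_{\mathbb P^{n-2}}(\mathcal O\oplus\mathcal O(i))$. The rays of $\Delta$ are:
\begin{itemize}
\item $v_1,v_2$ with $v_1+v_2=0$, coming from $\mathbb P^1$;
\item $v_3,v_4$ with $v_3+v_4=0$ if $i=0$, or $v_3+v_4=i\,e$ in the $\mathbb P^{n-2}$ directions otherwise;
\item the $n-1$ rays of $\mathbb P^{n-2}$, which form a primitive collection of length $n-1\ge 3$.
\end{itemize}
Since $X$ has dimension at least $3$, Lefschetz-type results give $N^1(X)\simeq N^1(\mathbb P_\Delta)\simeq\mathbb R^3$, spanned by $H_1=[x_1]$, $F=[x_4]$-type classes and $H=$ the pullback of the hyperplane class of $\mathbb P^{n-2}$. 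By \cite{bor}, ${\rm Nef}(X)={\rm Nef}(\mathbb P_\Delta)$, which is a simplicial cone with three facets dual to the primitive relations.

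\emph{Constructing the involutions.} The pair $v_1+v_2=0$ has degree $2$, so $f=a x_1^2+bx_1x_2+cx_2^2$ and $X\to Y$ is a double cover away from $\{a=b=c=0\}$. The covering involution $\iota_1$ is a pseudo-automorphism that swaps the two sheets. It acts on $N^1$ by fixing the pullbacks from $Y$ and sending $H_1\mapsto -H_1+(\text{class of }-K_Y)$. Its reflecting wall is the facet of ${\rm Nef}$ spanned by classes pulled back from $Y$, which is contracted by $X\to Y$.

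For a second involution $\iota_2$, we use the relation $v_3+v_4$. If $i=0$ it has degree $2$ and the same argument applies to the projection $X\to\mathbb P^1\times\mathbb P^{n-2}$. If $i\ge1$, the equation is quadratic in the fibre coordinates $x_3,x_4$ of the $\mathbb P^1$-bundle over $\mathbb P^1\times\mathbb P^{n-2}$, again giving a generically $2{:}1$ map and a deck involution. This is the step needing care: the degree of the pair $(v_3,v_4)$ is $2-i\cdot(\ldots)$, so it has to be checked case by case. If instead it is the relation of Theorem~\ref{thm:2}(ii) that produces the second symmetry, I would follow the construction in the proof of Theorem~\ref{thm:2}, which yields an explicit second involution. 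In either case I would compute the two $3\times3$ matrices explicitly. Then I would verify that $\iota_1\iota_2$ is unipotent, by checking that its characteristic polynomial is $(t-1)^3$. Finally I would verify that both involutions fix the nef ray dual to the large primitive collection, namely the pullback of $H$ from $\mathbb P^{n-2}$, which is common to both walls.

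\emph{Remaining facet and conclusion.} The third facet of ${\rm Nef}(X)$ corresponds to the primitive collection of the $n-1$ rays of $\mathbb P^{n-2}$. It has codimension $n-1\ge3$ in $\mathbb P_\Delta$, so its irrelevant component meets $X$ in codimension $\ge 2$ and the wall contraction is divisorial, or of fibre type, on $X$. By Proposition~\ref{prop:facet-nef-to-mov}, this facet lies in $\partial{\rm Mov}(X)$, and the Proposition then gives the cone conjecture.

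The main obstacle I expect is the uniform treatment of the second involution across $0\le i\le n-2$, together with the unipotency check. The first thing to try is an explicit computation of the action on the basis $(H_1,H,\xi)$ via $\iota^*D+D=\pi^*\pi_*D$.
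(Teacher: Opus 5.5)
Your route is the paper's: two deck involutions from degree-$2$ pairs, ${\rm Nef}(X)={\rm Nef}(\mathbb P_\Delta)$ by \cite{bor}, the third facet placed in $\partial{\rm Mov}(X)$ by Proposition~\ref{prop:facet-nef-to-mov}, then Proposition~\ref{prop:cone-conjecture-from-two-involutions}.

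\textbf{The fan is misdescribed.} Your claim that $v_3+v_4=i\,e$ for $i\ge1$ is false, and it is exactly what creates your worry about the second involution. In $\mathbb P_{\mathbb P^{n-2}}(\mathcal O\oplus\mathcal O(i))$ the two fibre rays are always opposite; the twist sits on a base ray. Take the vertices $e_1,\dots,e_n$, $v_{n+1}=-e_1-\cdots-e_{n-2}+ie_n$, $v_{n+2}=-e_{n-1}$ and $v_{n+3}=-e_n$. The primitive relations are then
\[
v_{n-1}+v_{n+2}=0,\qquad v_n+v_{n+3}=0,\qquad v_1+\cdots+v_{n-2}+v_{n+1}=i\,v_n .
\]
So for every $0\le i\le n-2$ there are two primitive pairs of degree $2$. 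Lemmas~\ref{lem:conic-bundle} and~\ref{lem:action} then give both involutions uniformly, with no case analysis. Drop the fallback to Theorem~\ref{thm:2}(ii), for two reasons. That configuration does not occur here. And a Mordell--Weil translation is not an involution, so it could not feed Proposition~\ref{prop:cone-conjecture-from-two-involutions} anyway.

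\textbf{The unipotency check works, but you must also check the eigenspace.} With the correct fan, the verification you propose goes through; the paper leaves it implicit. Use the basis $H=[D_{n+1}]$, $A=[D_{n-1}]$, $E=[D_n]$, so that $[D_{n+3}]=E+iH$. Lemma~\ref{lem:action} gives
\[
\iota_1^*A=-A+(n-1+i)H+2E,\qquad \iota_2^*E=-E+(n-1-i)H+2A,
\]
with $\iota_1^*$ fixing $H,E$ and $\iota_2^*$ fixing $H,A$. Hence $h=\iota_1^*\iota_2^*$ fixes $H$ and induces $\left(\begin{smallmatrix}-1&-2\\2&3\end{smallmatrix}\right)$ on $N^1/\mathbb RH$. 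So $h$ is unipotent and not the identity.

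Hypothesis (2) also requires the $1$-eigenspace to be exactly $\mathbb RH$, which you did not list. Solving $h(x)=x$ forces $x_E=-x_A$ and then $2(n-1)x_A=0$, so this holds. The walls ${\rm Cone}(H,E+iH)$ and ${\rm Cone}(H,A)$ are facets of ${\rm Nef}(X)={\rm Cone}(H,A,E+iH)$, and $H$ is their common ray, as you expected.

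\textbf{The third facet needs a different justification.} The component $V(x_1,\dots,x_{n-2},x_{n+1})$ of the irrelevant locus is removed in the characteristic space. So saying it ``meets $X$ in codimension $\ge 2$'' tells you nothing about the contraction. The relevant fact is that the contraction associated with the third relation has fibres $\mathbb P^{n-2}$, of dimension $n-2\ge 2$:
\begin{itemize}
\item for $i=0$ it is the projection onto $\mathbb P^1\times\mathbb P^1$;
\item for $i\ge1$ it contracts $D_n\simeq\mathbb P^1\times\mathbb P^{n-2}$ onto $\mathbb P^1$.
\end{itemize}
It therefore falls in none of the three cases of Proposition~\ref{prop:facet-nef-to-mov}. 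Equivalently, by Remark~\ref{rem:comb}, the relation is not a pair and has $\ell\le 1$. Hence the facet lies in $\partial{\rm Mov}(X)$.
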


The paper is organized as follows. In Section~\ref{sec:1} we recall the necessary background on smooth Fano polytopes, primitive collections, and Cox rings, and we relate primitive pairs to codimension-two components of the irrelevant locus. Section~\ref{sec:2} reviews the embedded-variety method for computing Cox rings. In Section~\ref{sec:3} we prove Theorem~\ref{thm:1} and obtain explicit presentations in the Mori dream cases. Section~\ref{sec:4} is devoted to the proof of Theorem~\ref{thm:2}. In Section~\ref{sec:5} we establish the low-dimensional classification stated in Theorem~\ref{cor:1}. Section~\ref{sec:6} contains the proof of Theorem~\ref{thm:cone}, while Section~\ref{sec:7} discusses the K3 case. The appendix collects the data needed to carry out the argument used in the proof of Theorem~\ref{cor:1}.


\section{Preliminaries}
\label{sec:1}

This section collects the toric and combinatorial preliminaries needed in the sequel. 
We recall the notions of reflexive, terminal and smooth Fano polytopes, together with the associated toric Fano varieties and their basic numerical invariants. 
We then review the description of the Cox ring and of the irrelevant locus of a toric variety in terms of the vertices and faces of the defining polytope. 
Special attention is given to codimension-$2$ components of the irrelevant locus, which correspond to primitive pairs. 
For such pairs, we relate the degree of the primitive relation to the vanishing multiplicity of anticanonical sections along the corresponding component of the irrelevant locus. 
This relation will be used later to distinguish the two possible cases, namely primitive pairs of degree~$1$ and degree~$2$. 
We also record a reformulation of the degree-$2$ case in terms of lattice width, and conclude with a technical lemma on primitive pairs that will be repeatedly used in the analysis of the birational geometry of anticanonical hypersurfaces.

\subsection{Smooth Fano polytopes and primitive collections}
Let $N \simeq \mathbb{Z}^n$ be a lattice with dual $M = \operatorname{Hom}(N, \mathbb{Z})$, and denote the associated real vector spaces by $N_{\mathbb{R}}$ and $M_{\mathbb{R}}$ respectively.
A \emph{lattice polytope} $\Delta \subseteq N_{\mathbb{R}}$ is the convex hull of a finite set of points in $N$. 
Whenever $0\in N$ is an interior point of $\Delta$, its 
\emph{polar polytope} is
\[
 \Delta^{\circ} = \{ u \in M_{\mathbb{R}} \mid \langle x,u \rangle \ge -1 \text{ for all } x \in \Delta \}.
\]
We say that $\Delta$ is \emph{reflexive} if the origin is an interior point and the polar $\Delta^{\circ}$ is itself a lattice polytope.
Refining this further, $\Delta$ is called \emph{terminal} if the only lattice points it contains are the origin and its vertices, that is, $\Delta \cap N = \{0\} \cup \operatorname{Vert}(\Delta)$.
Additionally, $\Delta$ is \emph{smooth} if it is simplicial, contains the origin in its interior, and the vertices of every facet form a basis of the lattice $N$.
These combinatorial definitions directly encode the geometry of the toric Fano variety $\mathbb P_{\Delta}$ associated with the face fan of a reflexive polytope $\Delta$. In particular, the variety is $\mathbb{Q}$-factorial (resp. smooth) if and only if $\Delta$ is simplicial (resp. smooth), and it has terminal singularities precisely when $\Delta$ is terminal. Furthermore, the Picard rank of $\mathbb P_{\Delta}$ is given by $r - n$, where $r = |\operatorname{Vert}(\Delta)|$.

\subsection{Primitive pairs and the irrelevant locus}
Let $\Delta\subseteq N_\mathbb R$ be a reflexive 
lattice polytope. After fixing an ordering $v_1,\dots,v_r$ of the vertices, we write $x_i:=x_{v_i}$.
The Cox ring of $\mathbb P_{\Delta}$ is the polynomial ring
\[
 \mathcal R(\mathbb P_{\Delta})
 =
 \mathbb{K}[x_1,\dots,x_r],
\]
and its irrelevant locus $\operatorname{Irr}(\mathbb P_{\Delta})$ is the union of linear subspaces $V(x_{v} \mid v \notin F)$ as $F$ varies among the facets of $\Delta$. Note that these subspaces have codimension at least $2$.
Let $L \subset \operatorname{Irr}(\mathbb P_{\Delta})$ be a codimension-$2$ irreducible component, defined by a prime ideal $I_{L}$.
We define the \emph{multiplicity of $L$} as
\[
\mu(L):=\max\{\,m\in\mathbb Z_{\ge 0}\mid \mathcal{R}(\mathbb P_{\Delta})_{[-K]}\subseteq I_{L}^{m}\,\}.
\]
Equivalently, $\mu(L)$ corresponds to the vanishing order of a general anticanonical section along $L$.
In the context of a smooth toric Fano variety $\mathbb P_\Delta$ one always has
\[
\mu(L)\in\{1,2\}.
\]
The upper bound $\mu(L)\le 2$ holds because the monomial $\prod_{v\in\operatorname{Vert}(\Delta)} x_v$ represents the anticanonical class and vanishes to order $2$ along any intersection of two coordinate hyperplanes. Conversely, since $-K$ is ample, every monomial of degree $[-K]$ vanishes along $L$ (i.e. lies in $I_L$), which implies the lower bound $\mu(L)\ge 1$.

The irreducible components of the irrelevant locus are in bijective correspondence with \emph{primitive collections},
see~\cite{bat}. A non-empty subset of vertices $\mathcal{P} \subseteq \operatorname{Vert}(\Delta)$ is defined as a primitive collection if it is not contained in any face of $\Delta$, whereas every proper subset of $\mathcal{P}$ is.
Each primitive collection $\mathcal{P} = \{v_{i_1}, \dots, v_{i_k}\}$ satisfies a unique linear dependency called the \emph{primitive relation}:
\[
    v_{i_1} + \dots + v_{i_k} \;=\; c_1 v_{j_1} + \dots + c_\ell v_{j_\ell},
\]
where the coefficients $c_r$ are positive integers and the sum on the right lies in the cone generated by $\{v_{j_1}, \dots, v_{j_\ell}\}$ in the fan of $\Delta$.
From this relation, we derive the \emph{degree} of the collection, defined as
\[
 d(\mathcal{P}) := k - \sum_{r=1}^{\ell} c_r.
\]
Geometrically, $\mathcal{P}$ determines the numerical class of a torus-invariant curve $C_{\mathcal{P}} \subset \mathbb{P}_{\Delta}$, and the integer $d(\mathcal{P})$ coincides with the anticanonical degree $-K_{\mathbb{P}_{\Delta}} \cdot C_{\mathcal{P}}$. Since $\mathbb{P}_{\Delta}$ is Fano, this degree must be strictly positive.
When considering collections of cardinality two—termed \emph{primitive pairs}—the condition $d(\mathcal{P}) > 0$ restricts the primitive relation to exactly two forms:
\begin{itemize}
    \item \textbf{Degree 2:} $v_i + v_j = 0$.
    \item \textbf{Degree 1:} $v_i + v_j = v_k$.
\end{itemize}

For a smooth Fano polytope $\Delta$, there is a bijection between primitive pairs and codimension-$2$ irreducible components of the irrelevant locus $\mathrm{Irr}(\mathbb{P}_{\Delta})$. Since the irrelevant locus is defined as the union of subspaces $V(x_v \mid v \in \mathcal{P})$ over all primitive collections, the components of codimension $2$ correspond precisely to collections of cardinality two. Thus, we identify a primitive pair $\{v_1, v_2\}$ with the linear subspace $L = V(x_1, x_2)$.

\begin{proposition}\label{prop:facet-irr-correspondence}
Let $\{v_1,v_2\}$ be a primitive pair of a smooth Fano polytope $\Delta$, and let
$L:=V(x_1,x_2)\subseteq \operatorname{Irr}(\mathbb P_\Delta)$. Then
\[
 d(\{v_1,v_2\})=\mu(L).
\]
\end{proposition}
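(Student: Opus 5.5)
The plan is to work monomial by monomial. The anticanonical space $\mathcal R(\mathbb P_\Delta)_{[-K]}$ is spanned by the monomials $x^{m}:=\prod_{i} x_i^{\langle m,v_i\rangle+1}$ with $m$ running over the lattice points of the polar polytope $\Delta^\circ$. The ideal $I_L=\langle x_1,x_2\rangle$ is a monomial ideal, so its powers are monomial ideals as well. Hence $\mu(L)$ equals the minimum, over $m\in\Delta^\circ\cap M$, of the exponent sum $a_1(m)+a_2(m)$, where $a_i(m)=\langle m,v_i\rangle+1\ge 0$. In other words
\[
\mu(L)=\min_{m\in\Delta^\circ\cap M}\bigl(\langle m,v_1+v_2\rangle+2\bigr).
\]
It therefore suffices to compute this minimum in the two possible cases of the primitive relation.

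\textbf{Degree 2.} If $v_1+v_2=0$, the quantity $\langle m,v_1+v_2\rangle+2$ equals $2$ for every $m$, so $\mu(L)=2=d$.

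\textbf{Degree 1.} If $v_1+v_2=v_3$ with $v_3$ a vertex, the quantity becomes $\langle m,v_3\rangle+2$. Since $\langle m,v_3\rangle\ge -1$ on $\Delta^\circ$, we get $\mu(L)\ge 1$; this also follows from the general lower bound noted before the statement. To obtain equality, I need some $m\in\Delta^\circ\cap M$ with $\langle m,v_3\rangle=-1$. Any vertex $m_F$ of $\Delta^\circ$ dual to a facet $F$ of $\Delta$ containing $v_3$ works: such an $m_F$ is a lattice point because $\Delta$ is smooth and hence reflexive, and it satisfies $\langle m_F,v\rangle=-1$ for all $v\in F$. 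Then $x^{m_F}$ has total degree exactly $1$ in $x_1,x_2$. Concretely, one of the exponents $a_1,a_2$ vanishes and the other equals $1$, since the two are nonnegative and sum to $1$. This gives $\mu(L)\le 1$, hence $\mu(L)=1=d$.

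The main point to verify carefully is the reduction of $\mu$ to a minimum over monomials. This requires that a general linear combination of monomials lies in $I_L^k$ if and only if every monomial does, which holds because $I_L^k$ is a monomial ideal. It also requires the standard Cox description of the anticanonical sections via lattice points of $\Delta^\circ$. A smaller point is that in the degree-$1$ case the relation involves only the single vertex $v_3$, with $v_1+v_2$ lying in the cone over a face. This is guaranteed because $d>0$ forces $\sum c_r=1$, which was already established in the classification of primitive pairs above.
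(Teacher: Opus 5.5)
Your proposal is correct and follows the paper's proof essentially step for step. You use the same exponent count $\langle m,v_1+v_2\rangle+2$ in the degree-$2$ case, and in the degree-$1$ case you choose the same facet normal $m$ with $\langle m,v_3\rangle=-1$. Your explicit reduction of $\mu(L)$ to a minimum over monomials, justified by $I_L^k=\langle x_1,x_2\rangle^k$ being a monomial ideal, spells out a step the paper leaves implicit.
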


\begin{proof}
We distinguish the two possible cases.

Assume first that $d(\{v_1,v_2\})=2$. Then the primitive relation is
$v_1+v_2=0$. Let $m\in \Delta^\circ\cap M$. The corresponding anticanonical
monomial is
\[
x^m=\prod_{v\in \operatorname{Vert}(\Delta)} x_v^{\langle m,v\rangle+1}.
\]
The exponents of $x_1$ and $x_2$ add up to
$(\langle m,v_1\rangle+1)+(\langle m,v_2\rangle+1)=
\langle m,v_1+v_2\rangle+2=2$.
Therefore every anticanonical monomial has total degree exactly $2$ in the
variables $x_1$ and $x_2$. In particular, each such monomial is divisible by
$x_1^2$, $x_1x_2$, or $x_2^2$, hence vanishes along $L$ with multiplicity $2$.
Thus $\mu(L)=2$.

Assume now that $d(\{v_1,v_2\})=1$. Up to relabelling, the primitive relation is
$v_1+v_2=v_3$. Choose $m\in \Delta^\circ\cap M$ such that
$\langle m,v_3\rangle=-1$, that is, let $m$ correspond to a facet of $\Delta$
containing $v_3$. By linearity we get
$\langle m,v_1\rangle+\langle m,v_2\rangle=\langle m,v_3\rangle=-1$.
Since $m\in \Delta^\circ$, one has $\langle m,v_i\rangle\ge -1$ for every vertex
$v_i$. Hence, up to swapping $v_1$ and $v_2$, the only possibility is
$\langle m,v_1\rangle=-1$ and $\langle m,v_2\rangle=0$. It follows that the
monomial $x^m$ has exponent $0$ in $x_1$ and exponent $1$ in $x_2$, so it
vanishes along $L$ with multiplicity $1$.

On the other hand, since $L$ is contained in the irrelevant locus, every
anticanonical monomial vanishes along $L$ at least once. Therefore
$\mu(L)=1$, and the proof is complete.
\end{proof}

\begin{remark}
The degree-$2$ case admits a convenient reformulation in terms of lattice width.
Let $v\in N$ be a primitive vector and let $P\subseteq M_\mathbb R$ be a
full-dimensional polytope. The width of $P$ along $v$ is defined as
$\operatorname{lw}_v(P):=\mathrm{length}(\varphi_v(P))$, where
$\varphi_v(x):=\langle x,v\rangle$. The \emph{lattice width} of $P$ is then
\[
 \operatorname{lw}(P):=\min\{\operatorname{lw}_v(P)\mid v\in N \text{ primitive},\, v\neq 0\}.
\]
If $\Delta$ is terminal and reflexive, then $\{v_1,-v_1\}$ is a primitive pair
if and only if $\operatorname{lw}_{v_1}(\Delta^\circ)=2$. Indeed, if
$\{v_1,-v_1\}$ is a primitive pair, then for every $x\in \Delta^\circ$ one has
$\langle x,v_1\rangle\ge -1$ and $\langle x,-v_1\rangle\ge -1$, hence
$\varphi_{v_1}(\Delta^\circ)\subseteq [-1,1]$. Since $\Delta^\circ$ has
non-empty interior, the width is exactly $2$.
Conversely, if $\operatorname{lw}_v(\Delta^\circ)=2$ for some primitive vector
$v\in N$, then $\varphi_v(\Delta^\circ)=[-1,1]$, so $\pm v\in(\Delta^\circ)^\circ=\Delta$.
Since $\Delta$ is terminal, every non-zero lattice point of $\Delta$ is a
vertex. Thus $\pm v\in \operatorname{Vert}(\Delta)$, and they form a primitive
pair.
\end{remark}

\subsection{A technical lemma on primitive pairs}
We conclude this section with a technical lemma that will be used repeatedly in Section~\ref{sec:3}.
\begin{lemma}\label{lem:unique-vertex-pair}
Let $\Delta$ be a smooth Fano lattice polytope. Let $v_1, v_2, v_3$ be vertices satisfying $v_1 + v_2 = v_3$, and assume that $v_3$ is not contained in any primitive pair.
Then, for any vertex $v_i$ ($i \ge 3$), up to swapping $v_1$ and $v_2$, there exists a dual lattice vector $m \in \Delta^{\circ} \cap M$ such that:
\[
    \langle m, v_1 \rangle = 0, \qquad \langle m, v_2 \rangle = -1, \qquad \langle m, v_i \rangle = -1.
\]
\end{lemma}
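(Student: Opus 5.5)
The plan is to realize $m$ as the dual vector of a facet of $\Delta$ that contains both $v_3$ and $v_i$, and then use the relation $v_1+v_2=v_3$ exactly as in the degree-$1$ case of Proposition~\ref{prop:facet-irr-correspondence}.

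\textbf{Step 1: finding a common facet.} First I show that $v_3$ and $v_i$ lie on a common facet $F$ of $\Delta$. If $i=3$ there is nothing to prove, since any facet through $v_3$ works. If $i>3$, then $v_i\neq v_3$, and I consider the set $\{v_3,v_i\}$. Each singleton $\{v_3\}$, $\{v_i\}$ is a face, because every vertex is. So if $\{v_3,v_i\}$ were not contained in a face, it would by definition be a primitive collection of cardinality two. That is, it would be a primitive pair containing $v_3$, contradicting the hypothesis. Hence $\{v_3,v_i\}$ is contained in some face, and therefore in some facet $F$.

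\textbf{Step 2: the dual vector.} Since $\Delta$ is smooth, the vertices of $F$ form a basis of $N$. So there is a unique $m\in M$ with $\langle m,v\rangle=-1$ for every vertex $v$ of $F$. This $m$ is the vertex of $\Delta^\circ$ dual to $F$: it is integral by the basis property, and since $F$ is a facet supporting $\Delta$, one has $\langle m,x\rangle\ge -1$ on all of $\Delta$. Thus $m\in\Delta^\circ\cap M$ and $\langle m,v_3\rangle=\langle m,v_i\rangle=-1$.

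\textbf{Step 3: the values on $v_1$ and $v_2$.} By linearity, $\langle m,v_1\rangle+\langle m,v_2\rangle=\langle m,v_3\rangle=-1$. Both terms are integers, and both are $\ge -1$ because $m\in\Delta^\circ$. Hence one of them equals $-1$ and the other equals $0$. After swapping $v_1$ and $v_2$ if necessary, we get $\langle m,v_1\rangle=0$ and $\langle m,v_2\rangle=-1$, as required.

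The swap may depend on $i$. This is allowed by the phrase ``up to swapping'', but it must be kept track of when the lemma is applied later. In particular, if $v_i\in F$ happens to force $\langle m,v_1\rangle=-1$, then the roles of $v_1$ and $v_2$ are exchanged. There is no real obstacle here; the only point requiring care is Step~1, namely that the absence of primitive pairs through $v_3$ really forces $\{v_3,v_i\}$ into a face. This follows directly from the definition of primitive collection, since all proper subsets of a two-element set are faces.
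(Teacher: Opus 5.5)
Your proof is correct and follows the paper's argument essentially step for step. You pick a facet through $v_3$ and $v_i$, which exists because $\{v_3,v_i\}$ cannot be a primitive pair; you take its dual vertex $m\in\Delta^\circ\cap M$; and you conclude from $\langle m,v_1\rangle+\langle m,v_2\rangle=-1$ with both terms integers $\ge -1$. Your extra justifications that $m$ is integral and lies in $\Delta^\circ$ are welcome details the paper leaves implicit.
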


\begin{proof}
We distinguish two cases to locate the required facet normal $m \in \Delta^{\circ} \cap M$.
If $i=3$, we simply select any facet containing the vertex $v_3$.
If $i > 3$, we consider the set $\{v_3, v_i\}$. Since $v_3$ does not belong to any primitive pair, $\{v_3, v_i\}$ cannot be a primitive pair and must therefore be contained in a facet.
In either case, let $m \in \Delta^{\circ} \cap M$ be the defining normal of such a facet; by construction, we have $\langle m, v_3 \rangle = -1$ and $\langle m, v_i \rangle = -1$.
Finally, evaluating $m$ on the relation $v_1 + v_2 = v_3$ yields $\langle m, v_1 \rangle + \langle m, v_2 \rangle = -1$. Since $\langle m, \cdot \rangle \ge -1$ on $\Delta$, the only integer solution is $\{-1, 0\}$, which proves the claim.
\end{proof}

\section{Cox rings of embedded varieties}
\label{sec:2}

In this section we recall the embedded-variety method of~\cite{hlu} for computing Cox rings. 
Starting from a projective subvariety $X\subseteq Z$ of a Mori dream space, under the assumption that the pullback 
$\operatorname{Cl}(Z)\to \operatorname{Cl}(X)$ is an isomorphism, the Cox sheaf of $X$ can be identified with the pushforward of the structure sheaf of the inverse image of $X$ in the characteristic space of $Z$. 
This allows one to realize the characteristic space of $X$ inside the total coordinate space of $Z$ and to describe the Cox ring of $X$ as an intersection of localizations of a quotient of $\mathcal R(Z)$. 
We first recall the necessary language of Cox sheaves, characteristic spaces and irrelevant loci, and then state the precise form of the localization formula that will be used later. 
Finally, we record an algebraic generation criterion, derived from~\cite[Cor.~2.5]{hlu}, which gives a practical way to pass from the abstract intersection of localizations to explicit presentations by adjoining suitable fractions and checking a codimension condition.

\subsection{Characteristic spaces and Cox sheaves}
Let $\mathbb{K}$ be an algebraically closed field of characteristic $0$.  
Let $X$ be a normal variety such that $H^0(X,\mathcal{O}^*) = \mathbb{K}^*$.  
Denote by ${\rm WDiv}(X)$ the group of Weil divisors of $X$ and by ${\rm PDiv}(X)$ the subgroup of principal divisors.  
The {\em divisor class group} of $X$ is defined as  
\[
 {\rm Cl}(X) := \frac{{\rm WDiv}(X)}{{\rm PDiv}(X)}.
\]
Let $K\subseteq {\rm WDiv}(X)$ be a finitely
generated subgroup.
Assume that the quotient map $K \to {\rm Cl}(X)$ is an isomorphism. Define the {\em Cox sheaf} 
and the {\em Cox ring} of $X$ as:
\[
 \mathcal{R}_X := \bigoplus_{D \in K} \mathcal{O}_X(D)
 \hspace{2cm}
 \mathcal{R}(X) := H^0(X, \mathcal{R}_X).
\]
Being the variety $\mathbb{Q}$-factorial, the Cox sheaf is locally of finite type~\cite[\S 1.6.1]{adhl}. Denote by 
\[
 \widehat{X} := {\rm Spec}_X \mathcal{R}_X
 \hspace{2cm}
 \overline X := {\rm Spec}\,\mathcal R(X)
\]  
the {\em characteristic space} and the 
{\em total coordinate space} of $X$, respectively.
Both are irreducible and normal.
The variety $X$ can be covered by open subsets of the form  
$X_{D, f} := X \setminus {\rm Supp}({\rm div}(\tilde{f}) + D)$, where $D\in K$. The natural map  
\[
 H^0(X, \mathcal{R}_X)_f \to H^0(X_{D, f}, \mathcal{R}_X)
\]  
is an isomorphism of graded algebras. Gluing 
we obtain an embedding $\widehat{X} \to \overline{X}$, whose complement is a subset of codimension at least two, called the  
{\em irrelevant locus} of 
$X$~\cite[Constr.~1.6.3.1]{adhl}.
The inclusion $\mathcal{O}_X \to \mathcal{R}_X$, as the degree-zero part of the Cox sheaf, induces a morphism $p_X\colon \widehat{X} \to X$, which is a good quotient for the action of the quasitorus $H_X := {\rm Spec}\, \mathbb{K}[{\rm Cl}(X)]$.  
This situation is summarized in the following diagram:  
\[
\begin{tikzcd}
\widehat{X} \arrow[r, hookrightarrow] \arrow[d, "p_X"'] & \overline{X} \\
X
\end{tikzcd}
\]  

\subsection{The embedded-variety method}
Let $Z$ be a smooth projective variety with a finitely generated Cox ring, and let $p_Z \colon \widehat{Z} \to Z$ be its characteristic space.
Given a projective subvariety $X \subseteq Z$, 
let
\[
\widehat{X}_Z := p_Z^{-1}(X)
\]
and let $p_X\colon \widehat{X}_Z \to X$
denote the restriction of $p_Z$. 
If $X$ is normal and the pullback 
$\imath^* \colon \operatorname{Cl}(Z) \to \operatorname{Cl}(X)$
is an isomorphism, then there is an isomorphism of ${\rm Cl}(X)$-graded 
$\mathcal{O}_X$-algebras
\[
 \mathcal{R}_X \simeq (p_X)_* \mathcal{O}_{\widehat{X}_Z}.
\]
In particular, $\widehat{X}_Z$ identifies with the characteristic space of $X$. 
By abuse of notation, we shall therefore write $\widehat{X}$ in place of $\widehat{X}_Z$ in the sequel.
Let $\tilde{X} \subseteq \overline{Z}$ be the closure of $\widehat{X}$. We define $\tilde{X}_1 \subseteq \tilde{X}$ as the open subset obtained by removing the union of all codimension-one subvarieties of $\tilde{X} \setminus \widehat{X}$. The following diagram represents this setup:
\[
\begin{tikzcd}
\widehat{X} \arrow[r, hook] \arrow[d, hook] 
& \tilde{X}_1 \arrow[r, hook] 
& \tilde{X} \arrow[d, hook] \\
\widehat{Z} \arrow[rr, hook] && 
\overline{Z}
\end{tikzcd}
\]
In this diagram, the complement $\tilde{X}_1 \setminus \widehat{X}$ has codimension at least two, while the complement $\tilde{X} \setminus \tilde{X}_1$ is a union of hypersurfaces.

\begin{theorem}
\label{thm-emb}
Under the above hypotheses let $\{m_1, \dots, m_k\}\subseteq\mathcal R(Z)$ 
be a homogeneous basis for the defining ideal 
of $\tilde X\setminus\tilde X_1$. 
If $\tilde{X}_1$ is normal, then 
\[
 \mathcal{R}(X)
 =
 \bigcap_{i=1}^k (\mathcal{R}(Z)/I(\tilde{X}))_{m_i}.
\]
\end{theorem}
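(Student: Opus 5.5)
The plan is to prove the formula for $\mathcal R(X)$ by identifying both sides with the ring of regular functions on the normal variety $\tilde X_1$, via sheaves and Hartogs-type extension.

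\textbf{Step 1: $\mathcal R(X)=\mathcal O(\widehat X)$.} By the isomorphism $\mathcal R_X\simeq (p_X)_*\mathcal O_{\widehat X}$ recalled above, taking global sections gives
\[
\mathcal R(X)=H^0\bigl(X,(p_X)_*\mathcal O_{\widehat X}\bigr)=\mathcal O(\widehat X).
\]

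\textbf{Step 2: $\mathcal O(\widehat X)=\mathcal O(\tilde X_1)$.} By construction $\tilde X_1\setminus\widehat X$ has codimension at least two in $\tilde X_1$. Since $\tilde X_1$ is normal by hypothesis, every regular function on $\widehat X$ extends uniquely to $\tilde X_1$ (algebraic Hartogs / $S_2$ property). Hence the restriction map is an isomorphism.

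\textbf{Step 3: covering $\tilde X_1$ by principal opens.} Set $A:=\mathcal R(Z)/I(\tilde X)$, the coordinate ring of the affine variety $\tilde X$, which is integral since $\widehat X$ is irreducible. The complement $\tilde X\setminus\tilde X_1$ is the zero set $V(m_1,\dots,m_k)\cap\tilde X$. Therefore
\[
\tilde X_1=\bigcup_{i=1}^k \tilde X_{m_i},
\]
where $\tilde X_{m_i}=\{\bar m_i\neq 0\}\subseteq\tilde X$ is a principal affine open with coordinate ring $A_{m_i}$. Regular functions form a sheaf, and all these rings sit inside the function field of $\tilde X$. So $\mathcal O(\tilde X_1)$ is the set of rational functions regular on every $\tilde X_{m_i}$, which gives
\[
\mathcal O(\tilde X_1)=\bigcap_{i=1}^k A_{m_i}
\]
inside $\operatorname{Frac}(A)$.

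\textbf{Step 4: gradings.} All identifications are compatible with the $\mathrm{Cl}(X)\simeq\mathrm{Cl}(Z)$-gradings. This holds because the $m_i$ are homogeneous and the action of $H_Z$ preserves $\widehat X$, $\tilde X$ and $\tilde X_1$. The last point needs care: $\tilde X_1$ is $H$-stable because the set of codimension-one components of $\tilde X\setminus\widehat X$ is permuted by the connected part of $H$, and $H$ stabilizes the whole boundary. Consequently the defining ideal of $\tilde X\setminus\tilde X_1$ is homogeneous, which justifies choosing a homogeneous generating set $m_1,\dots,m_k$.

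The main subtlety is Step 2 together with this homogeneity check. One must ensure that codimension is measured inside $\tilde X_1$ and that normality of $\tilde X_1$, rather than of $\tilde X$, suffices; it does, since extension across codimension two is a local property. The remaining steps are formal.
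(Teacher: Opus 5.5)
Your argument is correct. The paper gives no proof of this statement; it quotes it from \cite{hlu}. Your chain $\mathcal R(X)=\mathcal O(\widehat X)=\mathcal O(\tilde X_1)=\bigcap_i A_{m_i}$ is the natural proof: it uses the isomorphism $\mathcal R_X\simeq (p_X)_*\mathcal O_{\widehat X}$, extension of functions across the codimension-two set $\tilde X_1\setminus\widehat X$ on the normal variety $\tilde X_1$, and the sheaf property for the cover of $\tilde X_1$ by the principal opens $\tilde X_{m_i}$.

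Two minor remarks. In Step 4, the group that permutes the components of $\tilde X\setminus\widehat X$ (preserving codimension) is all of $H_Z$; its identity component fixes each component, so mentioning it is unnecessary. Also, you and the statement both tacitly assume $m_i\notin I(\tilde X)$; if some $m_i$ lay in $I(\tilde X)$, then $A_{m_i}=0$ and that generator should simply be discarded.
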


In Theorem~\ref{thm-emb}, if the coordinate ring of $\tilde{X} \subseteq \overline{Z}$ is Cohen-Macaulay, and in particular if it is a complete intersection, then Serre's condition $S_2$ is automatically satisfied~\cite[\href{https://stacks.math.columbia.edu/tag/0342}{Tag 0342}]{stacks-project}. Since this condition is local~\cite[\href{https://stacks.math.columbia.edu/tag/033Q}{Tag 033Q}]{stacks-project}, we conclude that in this case, $\tilde{X}_1$ is normal if and only if it is smooth in codimension one.


We conclude the section with an algebraic criterion that will be used in the
next section to turn the description by localizations into explicit
presentations. The result 
follows directly from \cite[Cor.~2.5]{hlu}.

\begin{proposition}
\label{pro:gens}
Let $R$ be a normal finitely generated $\mathbb K$-algebra, and let
$m_1,\dots,m_k\in R$ be non-zero elements. Assume that
$q_j:=f_j/m_1^{r_j}$ belongs to $R_{m_1}\cap\cdots\cap R_{m_k}$ for
$j=1,\dots,t$. Let $R'\subseteq R_{m_1}\cap\cdots\cap R_{m_k}$ be the
subring generated by $R$ and the elements $q_1,\dots,q_t$.
Set $P:=R[s_1,\dots,s_t]$ and
\[
I:=\langle s_1m_1^{r_1}-f_1,\dots,s_tm_1^{r_t}-f_t\rangle.
\]
Then
\[
R'\cong P/(I:\langle m_1\rangle^\infty).
\]
Assume moreover that
\[
\operatorname{codim}_{\operatorname{Spec}(R')}V(m_1,\dots,m_k)\geq 2.
\]
Then the normalization of $R'$ in $\operatorname{Frac}(R)$ is
\[
R'^\nu=R_{m_1}\cap\cdots\cap R_{m_k}.
\]
\end{proposition}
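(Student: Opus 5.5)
The plan is to handle the two assertions separately. For the presentation of $R'$, consider the $R$-algebra homomorphism
\[
\varphi\colon P=R[s_1,\dots,s_t]\to R_{m_1},\qquad s_j\mapsto q_j=f_j/m_1^{r_j}.
\]
By definition its image is exactly $R'$, so it suffices to show $\ker\varphi=I:\langle m_1\rangle^\infty$. Throughout I take $R$ to be a domain (it is normal and, being the coordinate ring of an irreducible variety, integral), so $m_1$ is a nonzerodivisor and $R\to R_{m_1}$ is injective.

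\emph{First part.} The inclusion $I\subseteq\ker\varphi$ is immediate, since $\varphi(s_jm_1^{r_j}-f_j)=0$. Because $R_{m_1}$ is a domain in which $m_1$ is invertible, $\ker\varphi$ is saturated with respect to $m_1$, so $I:\langle m_1\rangle^\infty\subseteq\ker\varphi$. For the reverse inclusion, localize at $m_1$. In $P_{m_1}/IP_{m_1}$ each $s_j$ equals $f_j/m_1^{r_j}$, so $P_{m_1}/IP_{m_1}\cong R_{m_1}$, and this isomorphism is compatible with $\varphi$. Hence $\ker\varphi\cdot P_{m_1}=IP_{m_1}$. Contracting back to $P$ gives $\ker\varphi\subseteq IP_{m_1}\cap P=I:\langle m_1\rangle^\infty$. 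This proves $R'\cong P/(I:\langle m_1\rangle^\infty)$.

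\emph{Second part.} Set $S:=R_{m_1}\cap\dots\cap R_{m_k}$, so that $R\subseteq R'\subseteq S\subseteq\operatorname{Frac}(R)$.
\begin{enumerate}
\item $S$ is normal, being an intersection of the normal domains $R_{m_i}$ inside the common fraction field. Since $R'\subseteq S$, this gives $R'^\nu\subseteq S$.
\item For the converse, $R'$ is a finitely generated $\mathbb K$-algebra, so $R'^\nu$ is finite over $R'$ and is a normal noetherian domain. Hence
\[
R'^\nu=\bigcap_{\operatorname{ht}\mathfrak p=1}(R'^\nu)_{\mathfrak p}.
\]
\item Fix a height-one prime $\mathfrak p$ of $R'^\nu$. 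Since $R'\to R'^\nu$ is finite and injective between domains of finite type over $\mathbb K$, the dimension formula gives that $\mathfrak p\cap R'$ has height one. Equivalently, the preimage of $V(m_1,\dots,m_k)$ in $\operatorname{Spec}R'^\nu$ still has codimension at least $2$.
\item By the codimension hypothesis, $\mathfrak p\cap R'$ cannot contain all the $m_i$. So there is an index $i$ with $m_i\notin\mathfrak p$, and therefore $R_{m_i}\subseteq(R'^\nu)_{\mathfrak p}$.
\item Any $g\in S$ lies in $R_{m_i}$, hence in every $(R'^\nu)_{\mathfrak p}$, and so $g\in R'^\nu$. This gives $S\subseteq R'^\nu$.
\end{enumerate}

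The main obstacle is step 3, transferring the codimension condition from $\operatorname{Spec}R'$ to its normalization. I would handle it by the dimension formula for the finite birational extension $R'\subseteq R'^\nu$ of finitely generated domains, which shows that finite morphisms preserve codimension of closed subsets here. A secondary point to check is the saturation step in the first part, which relies on $R_{m_1}$ being a domain.
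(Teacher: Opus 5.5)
Your proof is correct, and it takes a different route from the paper only because the paper gives no argument of its own: it says the statement follows directly from \cite[Cor.~2.5]{hlu}.

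Your version is a self-contained commutative-algebra proof in two independent steps.
\begin{itemize}
\item For $R'\cong P/(I:\langle m_1\rangle^\infty)$, you compute the kernel of $s_j\mapsto q_j$ after inverting $m_1$, and then contract back to $P$.
\item For $R'^\nu=\bigcap_i R_{m_i}$, you write the normal noetherian domain $R'^\nu$ as the intersection of its localizations at height-one primes. You transfer the codimension hypothesis from $\operatorname{Spec}R'$ to $\operatorname{Spec}R'^\nu$ using the dimension formula for the finite birational extension $R'\subseteq R'^\nu$ of affine domains.
\end{itemize}

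What your route buys is transparency about where each hypothesis enters.
\begin{itemize}
\item The first assertion uses neither the normality of $R$ nor the condition $q_j\in\bigcap_i R_{m_i}$.
\item Its saturation step needs only that $m_1$ is invertible in $R_{m_1}$, not that $R_{m_1}$ is a domain.
\item The second assertion uses normality of $R$ only to make $\bigcap_i R_{m_i}$ integrally closed in $\operatorname{Frac}(R)$.
\item The codimension condition is used exactly once, on height-one primes of the normalization. This is why the hypothesis can be checked on the possibly non-normal ring $R'$ itself, as the paper does in the proof of Theorem~\ref{thm:1}.
\end{itemize}

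The citation, by contrast, is shorter and places the statement within the embedded-variety framework of \cite{hlu}, from which Theorem~\ref{thm-emb} is also taken.
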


\section{Proof of Theorem~\ref{thm:1}}
\label{sec:3}

This section is devoted to the proof of Theorem~\ref{thm:1}. 
The common strategy is to apply the embedded-variety method recalled in Section~\ref{sec:2} to a general anticanonical hypersurface 
\(X\subseteq \mathbb P_\Delta\). 
After identifying \(\mathcal R(X)\) with an intersection of localizations of 
\(R=\mathbb K[x_1,\dots,x_r]/\langle f\rangle\), we introduce suitable fractions which generate this intersection in each of the combinatorial cases appearing in the statement. 
The proof then reduces to showing that the resulting presentation is already saturated with respect to the relevant monomial and defines a normal ring. 
For the complete-intersection cases this is achieved by a general normality and saturation criterion, Lemma~\ref{lem:ci-normal-sat}; in the Pfaffian case we use instead Lemma~\ref{lem:pfaffian-normal}, based on the Buchsbaum--Eisenbud theorem. 
The required codimension estimates are obtained from the primitive relations of the polytope and from the technical lemma on primitive pairs proved in Section~\ref{sec:1}. 
We conclude the section by discussing a variant, corresponding to the index~\(117\), where the same argument does not apply verbatim and the Cox ring has to be described by a different set of generators and relations.

\subsection{General strategy}

\begin{proof}[Proof of Thm.~\ref{thm:1}]
We proceed by applying Proposition~\ref{pro:gens} to the ring \(R := \mathbb{K}[x_1,\dots,x_r]/\langle f\rangle\). Let \(\mathcal{M} = \{m_1,\dots,m_k\}\) denote the minimal squarefree monomial generators of the codimension-\(2\) component of the irrelevant ideal.
Identifying the Cox ring \(\mathcal{R}(X)\) with the intersection of localizations \(R_{m_1} \cap \dots \cap R_{m_k}\), we introduce auxiliary variables \(s_1, \dots, s_t\) corresponding to generators \(g_1/m_1^{r_1},\dots,g_t/m_1^{r_t}\) of this intersection.
We then consider the ideal \(I \subseteq \mathbb{K}[x,s]\) generated by the elements \(s_j m_1^{r_j}-g_j\).
Provided that the locus \(V(m_1,\dots,m_k)\) has codimension at least \(2\) inside \(\operatorname{Spec}(\mathbb{K}[x,s]/I)\), the proposition yields the isomorphism
\[
    \mathcal{R}(X) \;\simeq\; \frac{\mathbb{K}[x_1,\dots,x_r,s_1,\dots,s_t]}{I : \langle m_1\rangle^\infty}.
\]
We will explicitly show that the saturation is redundant, meaning that \(I:\langle m_1\rangle^\infty = I\).

\subsection{Cases \texorpdfstring{$(i)$}{(i)} and \texorpdfstring{$(ii)$}{(ii)}}

\textbf{Case (i).}
We assume that $\{v_1,v_2\}$ is the only primitive pair and that
$v_1+v_2=v_3$ is the corresponding primitive relation. In particular,
$V(x_1,x_2)$ is the unique component of codimension~$2$ in the irrelevant
locus of $\mathbb P_\Delta$.
By Proposition~\ref{prop:facet-irr-correspondence}, the multiplicity of a
general polynomial $f$ of degree $-K$ along $V(x_1,x_2)$ is~$1$. Hence we can
write
\[
    f=f_1x_1+f_2x_2,
\]
where $f_2\in \mathbb K[x_2,\dots,x_r]$. We consider the ideal
\[
I=\langle f_2-sx_1,\ f_1+sx_2\rangle
\subseteq \mathbb K[x_1,\dots,x_r,s].
\]
We first prove the required codimension estimate. It is enough to show that
\[
    \operatorname{codim} V(x_1,x_2,f_1,f_2)=4.
\]
After setting $x_1=x_2=0$, this amounts to showing that the restrictions of
$f_1$ and $f_2$ have no common variable factor. Equivalently, for every
$i>2$, at least one of $f_1$ and $f_2$ does not belong to
$\langle x_1,x_2,x_i\rangle$.
This follows from Lemma~\ref{lem:unique-vertex-pair}. Indeed, for every
$i\geq 3$ there exists a lattice point $m\in \Delta^\circ\cap M$ such that
\[
\langle m,v_1\rangle=0,
\qquad
\langle m,v_2\rangle=\langle m,v_i\rangle=-1.
\]
The corresponding anticanonical monomial
$\prod_j x_j^{\langle m,v_j\rangle+1}$ is divisible by $x_1$, but not by
$x_2$ or $x_i$. Therefore, after factoring out $x_1$, it gives a monomial of
$f_1$ which survives modulo $\langle x_1,x_2\rangle$ and is not divisible by
$x_i$. Hence $f_1\notin \langle x_1,x_2,x_i\rangle$ for every $i>2$, and the
codimension estimate follows.
We now apply Lemma~\ref{lem:ci-normal-sat}{(i)}. The
estimate just proved gives
$\operatorname{codim} V(I,x_1,x_2)=4$, while the corresponding estimates for
$V(I,x_1)$ and $V(I,x_2)$ follow from the same generality argument. On the
open set $D(x_1x_2)$, the variable $s$ can be eliminated, and the quotient
identifies with a localization of
$\mathbb K[x_1,\dots,x_r]/\langle f\rangle$. Since $f$ is general, this
localization is a domain and is regular in codimension one. Thus all
hypotheses of Lemma~\ref{lem:ci-normal-sat} are satisfied. Consequently,
\[
I:\langle x_1x_2\rangle^\infty=I,
\]
and the quotient displayed in Case~(i) is normal.

\medskip

\textbf{Case (ii).}
We assume that $\{v_1,v_2\}$ is the only primitive pair and 
$v_1 + v_2 = 0$ is the corresponding primitive relation.
In particular, $V(x_1,x_2)$ is the unique component of codimension~$2$
in the irrelevant locus of $\mathbb{P}_\Delta$.
By Proposition~\ref{prop:facet-irr-correspondence}, the multiplicity of a general
polynomial $f$ of degree $-K$ along $V(x_1,x_2)$ is~$2$, so we can write
\[
    f \;=\; f_1\,x_1^2 \;+\; f_2\,x_1 x_2 \;+\; f_3\,x_2^2,
\]
and on \(V(x_1,x_2) \subseteq \mathbb{K}^r\) the equations
\(f_1 = f_2 = f_3 = 0\) define three hypersurfaces in the coordinates
\(x_3,\dots,x_r\).
By the generality assumption, either \(V(f_1,f_2,f_3)\) has codimension~$3$,
or \(f_1,f_2,f_3 \in \langle x_i,x_j\rangle\) for some \(i,j>2\).
The latter cannot occur because it would produce a second primitive pair,
contradicting the uniqueness assumption. Therefore
\[
    \mathrm{codim}\,V(x_1,x_2,f_1,f_2,f_3) \;=\; 5.
\]

Now let
\[
I=\langle f_3-x_1s_1,\ f_2+x_1s_2+x_2s_1,\ f_1-x_2s_2\rangle
\subseteq \mathbb K[x_1,\dots,x_r,s_1,s_2].
\]
The last codimension computation gives
\[
\operatorname{codim}V(I,x_1,x_2)=5.
\]
The corresponding estimates for \(V(I,x_1)\) and \(V(I,x_2)\) are the expected
ones and follow from the same generality argument. On \(D(x_1x_2)\), the
relations eliminate \(s_1\) and \(s_2\), and the quotient identifies with a
localization of \(\mathbb K[x_1,\dots,x_r]/\langle f\rangle\). Since \(f\) is
general, this localization is a domain and is regular in codimension one.
Thus the hypotheses of Lemma~\ref{lem:ci-normal-sat}{(ii)} are satisfied. We conclude
that
\[
I:\langle x_1x_2\rangle^\infty=I,
\]
and that the quotient displayed in Case~(ii) is normal.

\subsection{Cases \texorpdfstring{$(iii)$}{(iii)} and \texorpdfstring{$(iv)$}{(iv)}}
\textbf{Case (iii).}
From the relation $v_1+v_2=0$ we deduce that $f$ can be written in the form
\[
    f \;=\; g_1\,x_1^2 \;+\; g_2\,x_1x_2 \;+\; g_3\,x_2^2,
\]
where $g_1,g_2,g_3\in \mathbb K[x_3,\dots,x_r]$. We now use the remaining
primitive relations to refine this expression. Any monomial appearing in
$g_1x_1^2$ satisfies $\langle m,v_1\rangle=1$, hence
$\langle m,v_3\rangle=\langle m,v_4\rangle+1\geq 0$. Thus $x_3$ divides
$g_1$. Analogously, $x_4$ divides $g_3$. Therefore \(f\) has the form
\[
    f \;=\; f_1\,x_1^{2}x_3 \;+\; f_2\,x_1x_2 \;+\; f_3\,x_2^{2}x_4.
\]
In this case the codimension-$2$ part of the irrelevant ideal is defined by
$\langle x_1x_2,\ x_1x_3,\ x_2x_4\rangle$.
Using the relation \(f=0\), one checks that \(f_3/x_1\) belongs to the
localizations corresponding to the components \(V(x_1,x_3)\) and
\(V(x_2,x_4)\). Similarly, \(f_1/x_2\) belongs to the same intersection of
localizations. We denote these two elements by \(s_1\) and \(s_2\), respectively.
Let
\[
I=\langle
f_3-x_1s_1,\ 
f_2+x_1x_3s_2+x_2x_4s_1,\ 
f_1-x_2s_2
\rangle
\subseteq \mathbb K[x_1,\dots,x_r,s_1,s_2].
\]
We now verify the codimension condition along the codimension-$2$ part of the
irrelevant locus. Intersecting with the three components
$V(x_1,x_2)$, $V(x_1,x_3)$ and $V(x_2,x_4)$, respectively, gives
\[
\begin{array}{c}
V(x_1,x_2,f_1,f_2,f_3),\\[3pt]
V(x_1,x_3,f_3,x_2x_4s_1+f_2,x_2s_2-f_1),\\[3pt]
V(x_2,x_4,x_1s_1-f_3,x_1x_3s_2+f_2,f_1).
\end{array}
\]
We claim that each of these has dimension \(r-3\) in
\(\operatorname{Spec}\mathbb K[x_1,\dots,x_r,s_1,s_2]\). For the last two
sets this follows from the generality of \(f_1,f_2,f_3\) and from the presence
of the variables \(s_1,s_2\), which make the displayed equations impose the
expected number of conditions.
It remains to discuss the first set. Since \(f_1,f_2,f_3\) are general
sections of their respective monomial linear systems, a failure of the expected
codimension can only occur along the base locus of these systems. Since the
linear systems are generated by monomials, their base loci are coordinate
subspaces. Thus \(V(x_1,x_2,f_1,f_2,f_3)\) could have codimension \(4\) only if
there were variables \(x_i,x_j\) such that
\(f_1,f_2,f_3\in \langle x_i,x_j\rangle\). This would force
\(\{v_i,v_j\}\) to be a primitive pair, contradicting the list of primitive
pairs in the present case. Hence the first set also has the expected dimension
\(r-3\).

Finally, we apply Lemma~\ref{lem:ci-normal-sat}{(iii)}. The codimension estimate above gives the condition on
\(V(I,x_1,x_2)\), while the corresponding estimates for \(V(I,x_1)\) and
\(V(I,x_2)\) follow from the same generality argument. On the open set
\(D(x_1x_2)\), the variables \(s_1\) and \(s_2\) can be eliminated, and the
quotient identifies with a localization of
\(\mathbb K[x_1,\dots,x_r]/\langle f\rangle\). Since \(f\) is general, this
localization is a domain; moreover, the same local description and the
generality assumption give regularity in codimension one on \(D(x_1)\cup
D(x_2)\). Therefore all hypotheses of Lemma~\ref{lem:ci-normal-sat} are
satisfied. We conclude that
\[
I:\langle x_1x_2\rangle^\infty=I,
\]
and that the quotient displayed in Case~(iii) is normal.

\textbf{Case (iv).}
Since $\{v_1,v_2\}$ is a primitive pair,
every monomial of $f$ is divisible by either $x_1$ or $x_2$,
and the same holds for $x_3$ and $x_4$.
Therefore we can write
\[
    f \;=\;
    f_1\,x_1x_3 \;+\;
    f_2\,x_1x_4 \;+\;
    f_3\,x_2x_3 \;+\;
    f_4\,x_2x_4,
\]
where each $f_i$ is homogeneous of the appropriate degree in
$\mathbb K[x_1,\dots,x_r]$.
The codimension-$2$ component of the irrelevant ideal is
\(\langle x_1x_3,\, x_1x_4,\, x_2x_3,\, x_2x_4\rangle\).
From the above expression for $f$ we obtain that
\[
    \frac{x_1f_2 + x_2f_4}{x_3}
    \quad\text{and}\quad
    \frac{x_3f_3 + x_4f_4}{x_1}
\]
belong to $\mathcal{R}(X)$.
If we denote these elements by $s_1$ and $s_2$, respectively,
we recover the relations appearing in the statement.
Intersecting with the coordinate subspaces $V(x_1,x_2)$ and $V(x_3,x_4)$
we obtain three components
\[
\begin{array}{c}
    V(x_1,\ x_2,\ x_3f_1 + x_4f_2,\ 
    x_3f_3 + x_4f_4,\ 
    f_1f_4 - f_2f_3,\ 
    s_1),\\[4pt]
    V(x_3,\ x_4,\ x_1f_1 + x_2f_3,\ 
    x_1f_2 + x_2f_4,\ 
    f_1f_4 - f_2f_3,\ 
    s_2),\\[4pt]
    V(x_1,\ x_2,\ x_3,\ x_4,\ f_1f_4 - f_2f_3 + s_1s_2).
\end{array}
\]
To conclude we must show that each of these
components has dimension $r-3$.
For the third component this is clear, since the last equation involves only one monomial in the variables $s_1$ and $s_2$, and hence it cannot be dependent on the previous four.
Concerning the first two components, they both have dimension $r-3$
if and only if the corresponding varieties
\[
\begin{array}{l}
    V(x_1,\ x_2,\ x_3f_1 + x_4f_2,\
      x_3f_3 + x_4f_4,\
      f_1f_4 - f_2f_3),\\[4pt]
    V(x_3,\ x_4,\ x_1f_1 + x_2f_3,\
      x_1f_2 + x_2f_4,\
      f_1f_4 - f_2f_3)
\end{array}
\]
have codimension~$4$.
We focus on the first one, since the argument for the second is analogous.

Assume, by contradiction, that the first component has codimension~$3$.
Then, after evaluating at $x_1 = x_2 = 0$, the remaining polynomials
would have a nontrivial common factor.
After a generic choice of coefficients we may assume that \(A:=x_3f_1+x_4f_2\) and \(B:=x_3f_3+x_4f_4\) have disjoint monomial supports; varying one coefficient of \(B\) (so \(B=B_t\)) shows that a non-monomial gcd would force a fixed non-monomial irreducible \(P\) to divide both \(A\) and \(B_t\) for infinitely many \(t\), but \(P\mid B_t\) is a proper Zariski-closed condition and cannot hold generically. Hence the gcd is monomial, so some variable \(x_i\) divides both \(A\) and \(B\).
If $i = 3$ (the case $i = 4$ being similar), then
$f_2, f_4 \in \langle x_1, x_2, x_3\rangle$;
while if $i > 4$ we would have
$f_1, f_2, f_3, f_4 \in \langle x_1, x_2, x_i\rangle$.
We claim that both possibilities are excluded by Lemma~\ref{lem:unique-vertex-pair},
whose hypotheses are satisfied in our setting.
Recall that lattice points $m\in \Delta^\circ\cap M$ correspond to Laurent monomials
\[
x^m \;=\; \prod_{j=1}^r x_j^{\langle m,v_j\rangle+1}
\in H^0(\mathbb{P}_\Delta,-K),
\]
and, in the decomposition
\[
f \;=\; f_1\,x_1x_3 \;+\; f_2\,x_1x_4 \;+\; f_3\,x_2x_3 \;+\; f_4\,x_2x_4,
\]
a monomial $x^m$ contributes to $f_1$ (resp. $f_2,f_3,f_4$) iff
$\langle m,v_1\rangle,\langle m,v_3\rangle \ge 0$
(resp. $\langle m,v_1\rangle,\langle m,v_4\rangle \ge 0$;
$\langle m,v_2\rangle,\langle m,v_3\rangle \ge 0$;
$\langle m,v_2\rangle,\langle m,v_4\rangle \ge 0$).
Moreover, since $\{v_1,v_2\}$ is primitive, every monomial has
$\max\{\langle m,v_1\rangle,\langle m,v_2\rangle\}\ge 0$; and by the analogous
property for $x_3,x_4$, every monomial also satisfies
$\max\{\langle m,v_3\rangle,\langle m,v_4\rangle\}\ge 0$.

\smallskip
\emph{Case $i=3$.}
Apply Lemma~\ref{lem:unique-vertex-pair} with any vertex $v_{i'}\notin\{v_1,v_2,v_3\}$.
We obtain $m\in \Delta^\circ\cap M$ such that
\[
\langle m,v_1\rangle=0, \qquad
\langle m,v_2\rangle=-1, \qquad
\text{and hence } \langle m,v_3\rangle=\langle m,v_1\rangle+\langle m,v_2\rangle=-1.
\]
This forces $\langle m,v_4\rangle\ge 0$.
Therefore $x^m$ contributes to $f_2$ (since $\langle m,v_1\rangle,\langle m,v_4\rangle\ge 0$),
and its coefficient in $f_2$ is not divisible by any of $x_1,x_2,x_3$
(because $\langle m,v_1\rangle=0$ and $\langle m,v_2\rangle=\langle m,v_3\rangle=-1$).
Hence $f_2\notin \langle x_1,x_2,x_3\rangle$.
Swapping $v_1$ and $v_2$ gives, by the lemma, a point $m'$ with
$\langle m',v_2\rangle=0$, $\langle m',v_1\rangle=\langle m',v_3\rangle=-1$ and
$\langle m',v_4\rangle\ge 0$, producing a monomial in $f_4$ not divisible by
$x_1,x_2,x_3$. Thus $f_4\notin \langle x_1,x_2,x_3\rangle$.
This contradicts the assumption $f_2,f_4\in \langle x_1,x_2,x_3\rangle$.

\smallskip
\emph{Case $i>4$.}
If we apply Lemma~\ref{lem:unique-vertex-pair} to $v_i$, we get 
$m\in \Delta^\circ\cap M$ such that
\[
\langle m,v_1\rangle=0,\qquad
\langle m,v_2\rangle=\langle m,v_i\rangle=-1,\qquad
\text{and hence }\langle m,v_3\rangle=-1.
\]
Again this implies $\langle m,v_4\rangle\ge 0$,
so $x^m$ contributes to $f_2$ and its coefficient is not divisible by
$x_1,x_2,x_i$ (since $\langle m,v_1\rangle=0$ and
$\langle m,v_2\rangle=\langle m,v_i\rangle=-1$).
Therefore $f_2\notin \langle x_1,x_2,x_i\rangle$.
This contradicts the assumption $f_1,f_2,f_3,f_4\in \langle x_1,x_2,x_i\rangle$.
In both cases we reach a contradiction, so the two possibilities are excluded,
as claimed.

We now apply Lemma~\ref{lem:pfaffian-normal} with \(a=1\) and \(b=3\). Indeed, the
Pfaffian ideal appearing in that lemma is precisely the ideal displayed in
the statement of the theorem in Case~(iv). Namely, if
\(S:=\mathbb K[x_1,\dots,x_r,s_1,s_2]\), this ideal is
\[
J=\left\langle
\begin{gathered}
f_1f_4-f_2f_3+s_1s_2,\quad
x_2s_2+x_3f_1+x_4f_2,\quad
x_1s_2-x_3f_3-x_4f_4,\\
x_1f_2+x_2f_4-x_3s_1,\quad
x_1f_1+x_2f_3+x_4s_1
\end{gathered}
\right\rangle.
\]
By the preceding codimension computation, the closed subset of
\(\operatorname{Spec}(S/J)\) lying over the codimension-two part of the
irrelevant locus has codimension at least \(2\). Moreover, on the open set
\(D(x_1x_3)\), the relations eliminate \(s_1\) and \(s_2\), and the quotient
identifies with
\[
\left(\mathbb K[x_1,\dots,x_r]/\langle f\rangle\right)_{x_1x_3}.
\]
Thus this localization is a domain and is regular in codimension one, because
\(f\) is general. Finally, for a general choice of the coefficients, the
Pfaffian ideal \(J\) has the expected codimension. Hence all hypotheses of
Lemma~\ref{lem:pfaffian-normal} are satisfied. We conclude that \(S/J\) is
normal and that
\[
J:\langle x_1x_3\rangle^\infty=J,
\]
and that the quotient displayed in Case (iv) is normal.
\end{proof}

\begin{lemma}
\label{lem:ci-normal-sat}
Let \(S\) and \(I\subseteq S\), $c\in\{2,3\}$ be one of the following:
\[
\begin{array}{ll}
\textup{(i)} &
S=\mathbb K[x_1,\dots,x_r,s],\quad
I=\langle f_2-x_1s,\ f_1+x_2s\rangle,\quad c=2;
\\[4pt]
\textup{(ii)} &
S=\mathbb K[x_1,\dots,x_r,s_1,s_2],\quad
I=\langle f_3-x_1s_1,\ f_2+x_1s_2+x_2s_1,\ f_1-x_2s_2\rangle,\quad c=3;
\\[4pt]
\textup{(iii)} &
S=\mathbb K[x_1,\dots,x_r,s_1,s_2],\quad
I=\langle f_3-x_1s_1,\ f_2+x_1x_3s_2+x_2x_4s_1,\ f_1-x_2s_2\rangle,\quad c=3.
\end{array}
\]
Assume that:
\begin{enumerate}
\item \(\operatorname{codim}_S V(I)=c\);
\item \(\operatorname{codim}_S V(I,x_1)\ge c+1\) and
\(\operatorname{codim}_S V(I,x_2)\ge c+1\);
\item \(\operatorname{codim}_S V(I,x_1,x_2)\ge c+2\);
\item \((S/I)_{x_1x_2}\) is a domain;
\item \(S/I\) is regular in codimension one on \(D(x_1)\cup D(x_2)\).
\end{enumerate}
Then \(S/I\) is a normal domain and
\[
I:\langle x_1x_2\rangle^\infty=I.
\]
\end{lemma}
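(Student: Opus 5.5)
The plan is to verify Serre's criterion (R1)+(S2) for $S/I$ and then deduce saturation from the fact that $x_1$ and $x_2$ are nonzerodivisors.

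\textbf{Step 1: $S/I$ is Cohen--Macaulay.} In each of the three cases $I$ is generated by exactly $c$ elements: two in case (i), three in cases (ii) and (iii). By hypothesis (1), $\operatorname{codim}_S V(I)=c$. Since $S$ is a polynomial ring, hence Cohen--Macaulay, $I$ is generated by a regular sequence, so $S/I$ is a complete intersection. Consequently $S/I$ is Cohen--Macaulay and equidimensional of dimension $\dim S-c$, and every associated prime of $I$ is minimal of height exactly $c$. In particular $S/I$ satisfies (S2), and indeed $S_k$ for every $k$.

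\textbf{Step 2: $x_1$ and $x_2$ are nonzerodivisors, which gives saturation.} By hypothesis (2), $\dim V(I,x_1)<\dim V(I)$. Since all associated primes of $I$ are minimal of height $c$, no associated prime contains $x_1$; otherwise $V(I,x_1)$ would contain a component of $V(I)$ of full dimension. The same argument applies to $x_2$. Hence $x_1x_2$ is a nonzerodivisor on $S/I$, so $S/I\hookrightarrow (S/I)_{x_1x_2}$ and $I:\langle x_1x_2\rangle^\infty=I$. Combined with hypothesis (4), this shows that $S/I$ is a subring of a domain, hence itself a domain.

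\textbf{Step 3: (R1).} Let $\mathfrak p$ be a height-one prime of $S/I$. If $\mathfrak p$ omits $x_1$ or $x_2$, then $\mathfrak p$ lies in $D(x_1)\cup D(x_2)$, and $(S/I)_{\mathfrak p}$ is regular by hypothesis (5). Otherwise $\mathfrak p\supseteq (x_1,x_2)$, so $V(\mathfrak p)\subseteq V(I,x_1,x_2)$. Hypothesis (3) says this locus has codimension at least $2$ in $\operatorname{Spec}(S/I)$, which contradicts $\operatorname{ht}\mathfrak p=1$. Thus $S/I$ satisfies (R1), and Serre's criterion gives normality.

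The only genuinely delicate point is Step 2: one must use that unmixedness of a complete-intersection ideal forces every associated prime to be minimal. This is what turns the dimension drop in hypothesis (2) into the nonzerodivisor property, and with it the equality $I:\langle x_1x_2\rangle^\infty=I$. Everything else is bookkeeping with Serre's criterion. Hypothesis (3) is needed only to rule out height-one primes lying over $V(x_1,x_2)$.
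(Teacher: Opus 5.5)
Your proposal is correct and follows the paper's proof essentially step for step: the complete-intersection structure gives Cohen--Macaulay and $S_2$; unmixedness turns hypothesis (2) into $x_1,x_2$ being nonzerodivisors, which yields saturation and, together with (4), the domain property; and hypotheses (3) and (5) give $R_1$ for Serre's criterion. The only cosmetic difference is that you read off $I:\langle x_1x_2\rangle^\infty=I$ directly from $x_1x_2$ being a nonzerodivisor, whereas the paper deduces it from the primeness of $I$. One caveat applies to both versions: in a non-local polynomial ring, $c$ generators of a height-$c$ ideal need not form a regular sequence in the given order, so the claim should be made after localizing at primes containing $I$, which is all that Cohen--Macaulayness requires.
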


\begin{proof}
Since \(S\) is a polynomial ring and \(I\) is generated by \(c\) elements of
codimension \(c\), the generators of \(I\) form a regular sequence. Thus
\(S/I\) is a complete intersection. In particular, it is Cohen--Macaulay and
satisfies Serre's condition \(S_2\).
We first prove that \(x_1x_2\) is a non-zero divisor on \(S/I\). Since \(S/I\) is
Cohen--Macaulay, all associated primes are minimal. The inequalities
\(\operatorname{codim}_S V(I,x_1)\ge c+1\) and
\(\operatorname{codim}_S V(I,x_2)\ge c+1\) show that neither \(x_1\) nor \(x_2\) is
contained in a minimal prime of \(I\). Hence both \(x_1\) and \(x_2\), and therefore
also \(x_1x_2\), are non-zero divisors on \(S/I\).
It follows that the natural map \(S/I\to (S/I)_{x_1x_2}\) is injective. Since
\((S/I)_{x_1x_2}\) is a domain by assumption, \(S/I\) is a domain as well. In
particular \(I\) is prime. Since \(x_1x_2\notin I\), primeness immediately gives
\[
 I:\langle x_1x_2\rangle^\infty=I.
\]
It remains to prove normality. By assumption
\(\operatorname{codim}_S V(I,x_1,x_2)\ge c+2\). Since \(I\) has codimension \(c\),
the closed subset defined by \(x_1=x_2=0\) has codimension at least \(2\) in
\(\operatorname{Spec}(S/I)\). Hence every codimension-one point of
\(\operatorname{Spec}(S/I)\) lies in \(D(x_1)\cup D(x_2)\). By assumption, the
ring \(S/I\) is regular in codimension one on this open subset. Therefore
\(S/I\) satisfies Serre's condition \(R_1\).
Thus \(S/I\) satisfies both \(S_2\) and \(R_1\). Since it is a domain, Serre's
criterion shows that \(S/I\) is normal.
\end{proof}

\begin{lemma}
\label{lem:pfaffian-normal}
Let $S:=\mathbb K[x_1,\dots,x_r,s_1,s_2]$,
and let \(f_1,\dots,f_4\in \mathbb K[x_1,\dots,x_r]\). Consider the
skew-symmetric matrix
\[
M=
\begin{pmatrix}
0   & x_1 & x_2 & x_3 & x_4\\
-x_1& 0   & -s_1& -f_4& f_3\\
-x_2& s_1 & 0   & f_2 & -f_1\\
-x_3& f_4 & -f_2& 0   & s_2\\
-x_4& -f_3& f_1 & -s_2& 0
\end{pmatrix},
\]
and let \(J\subseteq S\) be the ideal generated by the maximal Pfaffians of
\(M\). Let \(a\in\{1,2\}\) and \(b\in\{3,4\}\), and set
$X:=V(J)\subseteq\mathbb K^{r+2}$.
Assume that:
\begin{enumerate}
\item \(\operatorname{codim} X=3\);

\item \(\operatorname{codim}_{X}\bigl(X\cap V(x_a)\bigr)\geq 1\) and
\(\operatorname{codim}_{X}\bigl(X\cap V(x_b)\bigr)\geq 1\);

\item \(\operatorname{codim}_{X}\bigl(X\cap V(x_a,x_b)\bigr)\geq 2\);

\item \((S/J)_{x_a x_b}\) is a domain;

\item the open subset $X\setminus V(x_a,x_b)$
is regular in codimension one.
\end{enumerate}
Then \(S/J\) is normal, and
\[
J:\langle x_a x_b\rangle^\infty=J.
\]
\end{lemma}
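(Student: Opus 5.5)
The proof will mirror the argument of Lemma~\ref{lem:ci-normal-sat}, replacing the complete-intersection input by the Buchsbaum--Eisenbud structure theorem for Gorenstein ideals of codimension three. By hypothesis (1), the ideal \(J\) of \(4\times4\) Pfaffians of the \(5\times 5\) skew-symmetric matrix \(M\) has codimension \(3\). This is the maximal possible codimension for such an ideal. The Buchsbaum--Eisenbud theorem then shows that \(S/J\) is resolved by the self-dual complex
\[
0\to S\to S^5\xrightarrow{M} S^5\to S\to S/J\to 0,
\]
so \(S/J\) is perfect of codimension \(3\), hence Gorenstein and in particular Cohen--Macaulay. Consequently \(S/J\) satisfies Serre's condition \(S_2\), and every associated prime of \(J\) is minimal of codimension \(3\).

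The second step shows that \(x_ax_b\) is a non-zero divisor on \(S/J\). Since all associated primes of \(J\) are minimal, \(x_a\) is a zero divisor only if it lies in some minimal prime \(\mathfrak p\) of \(J\). In that case \(V(\mathfrak p)\) would be an irreducible component of \(X\) contained in \(X\cap V(x_a)\). This is excluded by hypothesis (2), which I read as saying that \(X\cap V(x_a)\) contains no component of \(X\) (equidimensionality of \(X\), coming from Cohen--Macaulayness, makes this precise). The same argument applies to \(x_b\), and hence to the product \(x_ax_b\). It follows that \(S/J\hookrightarrow (S/J)_{x_ax_b}\) is injective. By (4) the target is a domain, so \(S/J\) is a domain and \(J\) is prime. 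Since \(x_ax_b\notin J\) (it is a non-zero divisor and \(S/J\neq 0\)), primeness gives \(J:\langle x_ax_b\rangle^\infty=J\) directly.

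For normality it remains to verify \(R_1\). By (3), \(X\cap V(x_a,x_b)\) has codimension at least \(2\) in \(X\). Hence every height-one prime of \(S/J\) lies in the open set \(X\setminus V(x_a,x_b)\), and by (5) the corresponding localization is regular. Thus \(S/J\) is a domain satisfying \(R_1\) and \(S_2\), and Serre's criterion gives normality.

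The main obstacle is the first step: one has to invoke Buchsbaum--Eisenbud correctly, that is, check that the grade of the Pfaffian ideal equals \(3\) in the polynomial ring, so that the Pfaffian complex is acyclic and \(S/J\) is Cohen--Macaulay. I would argue this via the Buchsbaum--Eisenbud acyclicity criterion, using that in a polynomial ring grade equals codimension. After this, the remaining steps are the same formal argument as in Lemma~\ref{lem:ci-normal-sat}.
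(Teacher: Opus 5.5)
Your proposal is correct and follows the paper's proof essentially step by step. Buchsbaum--Eisenbud makes $S/J$ Gorenstein, hence Cohen--Macaulay, giving $S_2$ and no embedded primes. Hypothesis (2) then makes $x_ax_b$ a non-zero divisor, so $S/J$ embeds in the domain $(S/J)_{x_ax_b}$. Finally, (3) and (5) give $R_1$, and Serre's criterion applies.

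The differences are cosmetic. You deduce the saturation from primeness of $J$, whereas the paper reads it off directly from $x_ax_b$ being a non-zero divisor. The step you flag as the main obstacle is already settled by hypothesis (1), since grade equals height in the polynomial ring $S$.
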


\begin{proof}
Since \(J\) is the Pfaffian ideal of a \(5\times 5\) skew-symmetric matrix
and has the expected codimension \(3\), the Buchsbaum--Eisenbud theorem~\cite{be}
implies that \(S/J\) is Gorenstein. In particular, \(S/J\) is
Cohen--Macaulay. Hence \(S/J\) satisfies Serre's condition \(S_2\), and its
associated primes are precisely its minimal primes.
By assumption (2), neither \(X\cap V(x_a)\) nor \(X\cap V(x_b)\) contains an
irreducible component of \(X\). Equivalently, neither \(x_a\) nor \(x_b\) is
contained in a minimal prime of \(J\). Since \(S/J\) is Cohen--Macaulay, this
implies that \(x_a\) and \(x_b\) are non-zero divisors on \(S/J\). Therefore
\(x_a x_b\) is also a non-zero divisor on \(S/J\).
It follows that the localization map
\[
S/J \longrightarrow (S/J)_{x_a x_b}
\]
is injective. By assumption (4), \((S/J)_{x_a x_b}\) is a domain. Since
\(S/J\) injects into this domain, \(S/J\) itself is a domain.
Moreover, since \(x_a x_b\) is a non-zero divisor on \(S/J\), for every
\(n\geq 1\) we have $J:(x_a x_b)^n=J$. Thus
\[
J:\langle x_a x_b\rangle^\infty=J.
\]
It remains to prove normality. By assumption (3), the closed subset
$X\cap V(x_a,x_b)$
has codimension at least \(2\) in \(X\). Hence every codimension-one point of
\(X\) belongs to the open subset
$X\setminus V(x_a,x_b)$.
By assumption (5), this open subset is regular in codimension one. Therefore
\(S/J\) is regular at every codimension-one point of \(X\), that is, \(S/J\)
satisfies Serre's condition \(R_1\).
We have already shown that \(S/J\) satisfies \(S_2\). Since \(S/J\) is a
domain, Serre's criterion implies that \(S/J\) is normal.
\end{proof}

\subsection{A variant: the case of index \texorpdfstring{$117$}{117}}
\begin{remark}
    \label{rem:117}
The proof of Theorem~\ref{thm:1}, part (iv) works verbatim if the two relations are $v_1+v_2=v_3+v_4
= v_5$. On the other hand, 
if $v_1+v_2=v_3$ or $v_4$, we can not 
apply Lemma~\ref{lem:unique-vertex-pair},
so that the proof does not hold.
We are going to see in the following 
example that the Cox ring is indeed 
different (even if it is still finitely
generated).
\end{remark}

\begin{example}\label{ex:117}
Let $\Delta$ be the $4$-dimensional smooth lattice polytope with index $117$
in the Graded Ring Database, whose vertices are the columns of
\[
\begin{pmatrix*}[r]
 1 & -1 & 0 & 0 & 0 & 0 & 0\\
 0 & 0 & 0 & 0 & 0 & 1 &-1\\
 0 & 0 & 0 & 1 & 1 & 0 & -1\\
 0 & 1 & 1 & -1 & 0 & 0 & 0
\end{pmatrix*}.
\]
We have the primitive relations
\[
v_1+v_2=v_3,
\qquad
v_3+v_4=v_5.
\]
We first observe that these relations force certain divisibilities among the
coefficients of a general anticanonical equation. Indeed, any monomial
appearing in the term $f_1x_1x_3$ satisfies
$\langle m,v_1\rangle\geq 0$ and $\langle m,v_3\rangle\geq 0$. From the
relation $v_1+v_2=v_3$ it follows that either
$\langle m,v_2\rangle\geq 0$ or $\langle m,v_1\rangle\geq 1$. Hence every such
monomial is divisible by either $x_1$ or $x_2$, and therefore
$f_1\in \langle x_1,x_2\rangle$. The same argument applies to $f_3$.
Thus a general anticanonical hypersurface
$X\subseteq \mathbb P_\Delta$ can be written in the form
\[
f =
x_1^2x_3f_1
+x_1x_2x_3f_2
+x_1x_4f_3
+x_2^2x_3f_4
+x_2x_4f_5.
\]
The following elements belong to the intersection of the localizations
describing $\mathcal R(X)$:
\[
\frac{x_2x_3f_4+x_4f_5}{x_1},
\qquad
\frac{x_1f_3+x_2f_5}{x_3},
\qquad
-\frac{x_3f_1f_5+f_3s_1}{x_2}.
\]
We denote them by $s_1,s_2,s_3$, respectively.
Let $S:=\mathbb K[x_1,\dots,x_r,s_1,s_2,s_3]$,
and let 
\[
\begin{aligned}
I:=\big\langle\,
&x_1s_1-x_2x_3f_4-x_4f_5,\\
&x_1^2f_1+x_1x_2f_2+x_2^2f_4+x_4s_2,\\
&x_1f_1f_5+x_2f_2f_5-x_2f_3f_4+s_1s_2,\\
&x_1f_3+x_2f_5-x_3s_2,\\
&x_2s_3+x_3f_1f_5+f_3s_1,\\
&x_3^2f_1f_4+x_3f_2s_1-x_4s_3+s_1^2,\\
&x_1x_3f_1+x_2x_3f_2+x_2s_1+x_4f_3,\\
&x_1s_3-x_3f_2f_5+x_3f_3f_4-f_5s_1,\\
&f_1f_5^2-f_2f_3f_5+f_3^2f_4+s_2s_3
\,\big\rangle .
\end{aligned}
\]
These relations are obtained from the three defining relations for
$s_1,s_2,s_3$ after saturating with respect to the irrelevant monomial
$x_1x_2x_3x_4$.
A direct computation shows that the closed subsets
$V(I)\cap V(x_1,x_2)$ and $V(I)\cap V(x_3,x_4)$ have codimension $2$ inside
$V(I)$. Hence the codimension hypothesis of Proposition~\ref{pro:gens} is
satisfied. Therefore the Cox ring of \(X\) is the normalization of the
quotient by the saturated ideal:
\[
\mathcal R(X)
\cong
\left(
\frac{
\mathbb K[x_1,\dots,x_r,s_1,s_2,s_3]
}{
I:\langle x_1x_2x_3x_4\rangle^\infty
}
\right)^{\nu}.
\]
\end{example}

\section{Proof of Theorem~\ref{thm:2}}
\label{sec:4}

This section is devoted to the proof of Theorem~\ref{thm:2}. 
The argument is based on the birational transformations induced by primitive pairs of degree~$2$. 
We first show that such a pair gives rise to a toric conic bundle on the ambient Fano variety, and that the associated deck involution acts explicitly on the divisor class group of a general anticanonical hypersurface. 
This action is then used in the first case of the theorem: when two distinct primitive pairs of degree~$2$ are present, the corresponding involutions generate an infinite subgroup of 
\(\operatorname{Bir}(X)\), detected already on a rank-two quotient of \(\operatorname{Cl}(X)\). 
Since the birational automorphism group of a Calabi--Yau Mori dream space has only finitely many connected components, this proves that \(X\) is not a Mori dream space. 
In the second case, we analyze the two-dimensional sublattice generated by the relevant vertices. 
The corresponding toric surface section carries an elliptic fibration on a dense open subset of \(X\), and we exhibit, case by case, a section of infinite order in its Mordell--Weil group. 
This again produces infinitely many birational automorphisms and yields the desired non-Mori dream conclusion.

\subsection{Conic bundles and birational involutions}
We begin by proving preliminary lemmas about (rational) fibrations and involutions on a smooth Fano variety $P_\Delta$ induced by the presence of certain primitive pairs of $\Delta$.

\begin{lemma}\label{lem:conic-bundle}
Let $\Delta\subseteq N_{\mathbb{Q}}$ be a smooth Fano polytope and 
let $v_{1},v_{2}$ be two vertices of $\Delta$ 
such that $v_1+v_2=0$.
Then there exists a toric morphism
\[  \pi\colon\mathbb{P}_{\Delta}\longrightarrow\mathbb{P}_{\Delta_{0}},
\]
whose general fibers are conics with respect to the anticanonical degree,
where $\Delta_{0}\subseteq N_{\mathbb{Q}}/\langle v_{1}\rangle$ is the 
image of $\Delta$ under the quotient map.
\end{lemma}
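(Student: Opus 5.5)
The plan is to show that the linear projection $\varphi\colon N\to N/\langle v_1\rangle$ maps every cone of the face fan $\Sigma_\Delta$ into a cone of the face fan $\Sigma_{\Delta_0}$ of $\Delta_0:=\varphi(\Delta)$. By the standard criterion for toric morphisms, this gives $\pi\colon\mathbb P_\Delta\to\mathbb P_{\Delta_0}$. Then we identify the general fibre with $\mathbb P^1$ and compute its anticanonical degree.

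\emph{Step 1: $\Delta_0$ and its fan.} Since $v_1$ is a vertex of the smooth polytope $\Delta$, it is primitive, so $N_0:=N/\langle v_1\rangle$ is a lattice of rank $n-1$. The polytope $\Delta_0$ contains $0$ in its interior. The key combinatorial input is that $v_1$ and $v_2=-v_1$ form a primitive pair of degree $2$. Hence no facet contains both of them, while every facet contains at least one of them: otherwise a facet normal $m$ would satisfy $\langle m,v_1\rangle,\langle m,v_2\rangle\ge 0$, which together with $v_1+v_2=0$ forces both values to be $0$.

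I would use this to show that for every facet $F\ni v_1$ the face $G=F\setminus\{v_1\}$ maps bijectively onto a lattice basis of $N_0$. Indeed, $F$ is a basis of $N$ containing $v_1$, so its image is a basis of $N_0$. I would then show that the images $\varphi(\operatorname{cone}(G))$ form a complete simplicial fan $\Sigma_0$ in $N_{0,\mathbb R}$. Completeness holds because $\Sigma_\Delta$ is complete and every cone lies in $\operatorname{cone}(G)+\mathbb R v_1$ for some such $G$. The projected cones intersect properly because the star of $v_1$ projects homeomorphically onto $N_{0,\mathbb R}$: the star of a ray of a complete simplicial fan is a neighbourhood whose link is a sphere. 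The symmetric statement holds for $v_2$, and the two stars together cover everything.

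\emph{Step 2: $\Sigma_0$ is the face fan of $\Delta_0$.} We need that the vertices of $\Delta_0$ are exactly the images of the vertices $\neq v_1,v_2$, and that the $\varphi(G)$ span facets. Take a facet normal $m$ with $\langle m,v_1\rangle=0$; such an $m$ lies in $N_0^\vee$. By the dichotomy above, the facet of $m$ contains neither $v_1$ nor $v_2$, so this case does not arise. We must therefore argue differently.

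For $F=G\cup\{v_1\}$ with normal $m$, the form $m'=m+ \langle\cdot\rangle$ correction is not available. Instead I would pair $F$ with the unique adjacent facet $F'=G\cup\{v_2\}$, which exists because the link of $G$ is two points. Call its normal $m'$. Then $\tfrac12(m+m')$ vanishes on $v_1$ and equals $-1$ on $G$. To see that $\varphi(G)$ is a facet of $\Delta_0$ we need $\tfrac12(m+m')\ge-1$ on $\Delta$, which follows from convexity.

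This step is the main obstacle. Reflexivity of $\Delta_0$ may fail, since $\tfrac12(m+m')$ need not be integral. The statement only needs $\mathbb P_{\Delta_0}$, which I interpret as the toric variety of the fan $\Sigma_0$; it is smooth by Step 1, so this is harmless.

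\emph{Step 3: fibres.} Since $\varphi$ maps cones onto cones and the fan is split locally (each $\operatorname{cone}(G)$ lifts with the fibre fan $\{\pm v_1\}$), $\pi$ is a $\mathbb P^1$-bundle, in fact Zariski locally trivial over each affine chart $U_{\varphi(G)}$. The fibre $C$ is the torus-invariant curve associated to the primitive pair $\{v_1,v_2\}$. By the primitive relation, $-K\cdot C=d(\{v_1,v_2\})=2$. Hence $-K_{\mathbb P_\Delta}$ restricts to degree $2$ on fibres, so the fibres are conics in the anticanonical embedding.
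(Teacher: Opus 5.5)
There is a genuine gap, and it sits in the ``dichotomy'' of Step~1. From $\langle m,v_1\rangle,\langle m,v_2\rangle\ge 0$ and $v_1+v_2=0$ you correctly get $\langle m,v_1\rangle=\langle m,v_2\rangle=0$. But this is not a contradiction, and such facets do occur.

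Take the smooth toric del Pezzo surface with rays $e_1,\ e_1+e_2,\ e_2,\ -e_1,\ -e_2$ and $v_1=e_1$. The maximal cone $\operatorname{cone}(e_1+e_2,e_2)$ contains neither $\pm e_1$, and its normal $m=(0,-1)$ vanishes on both. More generally, suppose $v_1+v_i=v_j$ is a primitive relation and $v_i,v_j$ lie on a common facet. Then that facet has $\langle m,v_1\rangle=\langle m,v_j\rangle-\langle m,v_i\rangle=0$. Such relations are exactly those recorded by the sets $\mathcal P_1,\mathcal P_2$ of Lemma~\ref{lem:action}, so this case is not exceptional.

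Everything downstream inherits the error.
\begin{itemize}
\item The stars of $v_1$ and $v_2$ do not cover $\Sigma_\Delta$. So you never show that a cone avoiding $v_1,v_2$ maps into a cone of the target fan, nor that the projected stars of $v_1$ and $v_2$ give the same fan.
\item The neighbour of $F=G\cup\{v_1\}$ across $G$ need not be $G\cup\{v_2\}$. In the example, for $G=\{e_1+e_2\}$ it is $\{e_1+e_2,e_2\}$, while $\{e_1+e_2,-e_1\}$ is not even a cone: it is a primitive pair with $(e_1+e_2)+(-e_1)=e_2$. So the averaging in Step~2 is unavailable.
\item Step~3 is false as stated: $\pi$ is not a $\mathbb P^1$-bundle in general. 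In the example, the fibre over one torus-fixed point of the base $\mathbb P^1$ is the reducible conic $D_{e_1+e_2}\cup D_{e_2}$. This is why the lemma only speaks of general fibres.
\end{itemize}

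The missing ingredient is exactly the case the paper treats separately, ``neither $v_1$ nor $-v_1$ is a vertex of $F$''. If $\langle m,v_1\rangle=0$, then $m$ is a vertex of $\Delta^\circ$ lying in $v_1^\perp$. Hence it is a vertex of $\Delta_0^\circ=\Delta^\circ\cap v_1^\perp$, since a functional vanishing on $v_1$ is $\ge -1$ on $\Delta_0$ iff it is $\ge -1$ on $\Delta$. So $\operatorname{conv}\varphi(F)$ is a facet of $\Delta_0$, and $\varphi(\operatorname{cone}F)$ is a maximal cone of the face fan of $\Delta_0$.

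For $F=G\cup\{v_1\}$, take the actual neighbour $F'=G\cup\{w\}$ with normal $m'$. Integrality and $m'\ge -1$ on $\pm v_1$ give $\langle m',v_1\rangle\in\{-1,0,1\}$, and $-1$ is excluded because $v_1\notin F'$.
\begin{itemize}
\item If $\langle m',v_1\rangle=1$, then $v_2\in F'$, so $w=v_2$ and your average works. It is in fact integral: $\tfrac12(m+m')=m+v_1^*$, with $v_1^*$ taken in the dual basis of $F$.
\item If $\langle m',v_1\rangle=0$, the previous case gives $\varphi(\operatorname{cone}F)=\operatorname{cone}\varphi(G)\subseteq\operatorname{cone}\varphi(F')$, which is a maximal cone of the face fan.
\end{itemize}

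This shows directly that every cone of $\Sigma_\Delta$ maps into a cone of the face fan of $\Delta_0$. That is the target the statement asks for, so there is no need to reinterpret $\mathbb P_{\Delta_0}$. The degree computation should then be done for the general fibre only. It is the toric curve of the fan $\{0,\mathbb R_{\ge 0}v_1,\mathbb R_{\ge 0}v_2\}$, which meets $D_{v_1}$ and $D_{v_2}$ once each and no other invariant divisor.

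The paper handles the case $v_1\in F$ by a different route, a contradiction argument iterating degree-one relations $v+e_n=u$. But the idea your proposal is missing is the existence of facets with $\langle m,v_1\rangle=0$ and how to treat them.
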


\begin{proof}
Let $\Sigma = \Sigma_{\Delta}$ denote the spanning fan of the smooth Fano
polytope $\Delta \subset N_{\mathbb{Q}}$, and set
\[
  X \;:=\; \mathbb{P}_{\Delta} \;=\; X_{\Sigma},
\]
so that $X$ is a smooth projective toric variety.  
Because $v_{1}+v_{2}=0$ in $N$, the line
$\langle v_{1}\rangle_{\mathbb{Q}}$ meets $\Delta$
exactly in the segment $[v_{1},v_{2}]$.
Let
\[
  q \colon N \;\longrightarrow\;
  N' \;:=\; N / \langle v_{1}\rangle
\]
be the quotient homomorphism, and define
$\Delta_{0} := q(\Delta) \subset N'_{\mathbb{Q}}$.
The polytope $\Delta_{0}$ is still Fano (the origin is in its
interior and every vertex is primitive), but it need not be smooth or
simplicial.
Denote by $\Sigma_{0}$ the spanning fan of $\Delta_0$.
We claim that for every cone
$\sigma = \operatorname{cone}(F) \in \Sigma$
we have
\(
  q(\sigma) \subseteq\operatorname{cone}\bigl(q(F)\bigr) \in \Sigma_{0},
\)
so $q$ induces a morphism of fans
\[
  q_{*} \colon \Sigma \;\longrightarrow\; \Sigma_{0}.
\]
First of all, we consider the case in
which $v_1$ is a 
vertex of $F$. Since $\sigma$ is smooth
we can suppose that it coincides with
the positive orthant and that 
$v_1=e_n$. Let us suppose now by contradiction
that $q(\sigma)$ is not contained in a cone
of $\Sigma_0$. This implies that there exists
a vertex $v$ of $\Delta$ such that $q(v)$ lies
in the relative interior of a cone $q(\tau)$,
of dimension at least $2$,
for some face $\tau$ of $\sigma$. 
After possibly reordering we can suppose that
$\tau=\langle e_1,\dots,e_s\rangle$, for
some $2\leq s \leq n$, 
so that we can write $v=(\alpha_1,\dots,
\alpha_s,0,\dots,0,-\alpha_n)$, where $\alpha_i > 0$ 
for any $i$.
Observe that $v$ and $e_n$ cannot lie on
a cone of $\Sigma$ since otherwise the face
$\langle v,e_n\rangle$ would intersect the
face $\tau$
in its relative interior. Therefore there
is a primitive relation of degree $1$, of the 
form
$v + e_n = u$, where $u$ is another vertex of
$\Delta$, and we deduce that
\[
u=(\alpha_1,\dots,\alpha_s,0,\dots,0,1-\alpha_n).
\]
If $\alpha_n = 1$, then $u$ belongs to the positive
orthant, so that $u=e_i$, for some $i$, would contradict $s\geq 2$.
If $\alpha_n > 1$, again $u$ and $e_n$ cannot lie on
a cone of $\Sigma$, for the same reason as before. 
Repeating the same argument, after a finite number
of steps we get a contradiction.

If $-v_1$ is a vertex of $F$ we can reason in
the same way, so that in order to conclude
we have to consider the case in which 
neither $v_1$ nor $-v_1$ is a vertex of $F$. In this
case $F$ corresponds to a vertex of the
polar $Q$, lying on the slice at height
$0$. In particular $q(F)$ corresponds to a vertex of
$Q_0$, which means that $q(F)$ is a facet
of $\Sigma_0$. This proves the claim.

The morphism of fans $q_{*}$ gives a toric morphism of varieties
\[
  \pi \colon X = X_{\Sigma}
  \;\longrightarrow\;
  Y := X_{\Sigma_{0}}.
\]
The kernel lattice is
$L := \ker q = \mathbb{Z} v_{1} \simeq \mathbb{Z}$.
Inside $L_{\mathbb{Q}}$ the fan $\Sigma$ restricts to the two rays
$\rho_{1} := \mathbb{Q}_{\ge 0} v_{1}$ and
$\rho_{2} := \mathbb{Q}_{\ge 0} v_{2}$ together with the origin.
These three cones form the one–dimensional fan of
$\mathbb{P}^{1}$, so the geometric generic fibre of $\pi$ is
\[
  F \;\simeq\; \mathbb{P}^{1}.
\]
In $X$ one has
\(
  -K_{X} = \sum_{\rho \in \Sigma(1)} D_{\rho}.
\)
The fibre $F$ meets exactly the two torus–invariant divisors
$D_{v_{1}}$ and $D_{v_{2}}$, each transversely in one point, hence
\[
  (-K_{X}) \!\cdot\! F
  \;=\;
  D_{v_{1}} \!\cdot\! F
  \;+\;
  D_{v_{2}} \!\cdot\! F
  \;=\;
  1 + 1
  \;=\;
  2.
\]
Thus every fibre is a conic with respect to the anticanonical
polarisation.
The morphism
\(
  \pi \colon \mathbb{P}_{\Delta} \longrightarrow X_{\Sigma_{0}}
\)
is a \emph{toric conic bundle}: its fibres are smooth rational curves
of anticanonical degree~$2$.  
The base variety $Y = X_{\Sigma_{0}}$ is toric and projective; it is
smooth precisely when the quotient polytope $\Delta_{0}$ is simplicial,
but the conic–bundle structure exists in any case.
\end{proof}

\begin{example}[A projection that does
not induce a toric morphism]
Let $\Delta\subseteq N_\mathbb Q=\mathbb Q^{3}$ be the
lattice polytope whose vertices are the columns of the matrix
\[
  \begin{pmatrix*}[r]
      1 & 1 & 0 & -1 & 0 & 0\\
      0 & 1 & 1 & -1 & 0 & 0\\
      0 & -2 & 0 & 1 & -1 & 1
  \end{pmatrix*}.
\]
The polytope $\Delta$ is Fano, terminal and simplicial, but it is not smooth.
Write $F=\Sigma_{\Delta}$ for its spanning fan.
Its maximal cones are indexed by
\[
  \bigl\{\{1,2,3\},\;
         \{3,4,5\},\;
         \{2,3,5\},\;
         \{1,4,5\},\;
         \{1,2,5\},\;
         \{3,4,6\},\;
         \{1,4,6\},\;
         \{1,3,6\}\bigr\}.
\]

\medskip
\noindent
Choose the opposite vertices $v=(0,0,1)$ and $-v$, which form a primitive pair of degree~$2$.  Set
\[
  q\colon N_{\mathbb Q}\longrightarrow N_{\mathbb Q}/\mathbb Qv
  \qquad\text{(quotient map),}\qquad
  \Delta_{0}:=q(\Delta).
\]
Thus $q$ projects $\Delta$ along the line $\mathbb Qv$ and
$\Delta_{0}$ is the resulting quotient polytope, whose spanning fan we
denote by $F_{0}$.
The map $q$ does not extend to a toric morphism
$\mathbb P_{\!\Delta}\dashrightarrow \mathbb P_{\!\Delta_{0}}$, because
the following two three–dimensional simplicial cones of $F$
\[
  \begin{aligned}
    \sigma_{1}&=\operatorname{cone}\!\bigl\{
        (1,0,0),\ (1,1,-2),\ (0,1,0)
      \bigr\},\\[2pt]
    \sigma_{2}&=\operatorname{cone}\!\bigl\{
        (1,0,0),\ (0,1,0),\ (0,0,1)
      \bigr\},
  \end{aligned}
\]
are not mapped into any cone of~$F_{0}$.
Indeed, the maximal cones of $F_{0}$ are the four two–dimensional
simplicial cones shown below:

\begin{center}
\begin{tikzpicture}[>=latex,scale=1]
  \coordinate (v1) at ( 0, 1);   
  \coordinate (v2) at (-1,-1);   
  \coordinate (v3) at ( 1, 1);   
  \coordinate (v4) at ( 1, 0);   

  \fill[gray!20] (0,0) -- (v1) -- (v2) -- cycle; 
  \fill[gray!35] (0,0) -- (v1) -- (v3) -- cycle; 
  \fill[gray!20] (0,0) -- (v3) -- (v4) -- cycle; 
  \fill[gray!35] (0,0) -- (v4) -- (v2) -- cycle; 

  \draw[very thick,->] (0,0) -- (v1) node[above] {\tiny $(0,1)$};
  \draw[very thick,->] (0,0) -- (v2) node[below left] {\tiny $(-1,-1)$};
  \draw[very thick,->] (0,0) -- (v3) node[above right] {\tiny $(1,1)$};
  \draw[very thick,->] (0,0) -- (v4) node[below right] {\tiny $(1,0)$};
\end{tikzpicture}
\end{center}

Neither $q(\sigma_{1})$ nor $q(\sigma_{2})$ is contained in any of these
cones, so the projection fails to define a toric morphism.
\end{example}

\begin{lemma}\label{lem:action}
Let $\Delta\subseteq N_{\mathbb{Q}}$ be a smooth Fano polytope, let $X\in |-K_{\mathbb P_\Delta}|$ be a general smooth
anticanonical hypersurface.
Assume that $\Delta$ admits a primitive pair $\{v_{1},v_{2}\}$ of degree~$2$ and let $\sigma\in {\rm Aut}(X)$ be the involution induced by the conic bundle of Lemma~\ref{lem:conic-bundle}. Let $w_1,\dots,w_r\in{\rm Cl}(X)$ be the classes of the restrictions $D_1^X,\dots,D_r^X$ of the prime invariant divisors of $X$.
Then the action of $\sigma$ on ${\rm Cl}(X)$ is given by
\[
 \sigma(w_i) 
 = \begin{cases}
- w_2  + \displaystyle{\sum_{j\geq 3,\ j\notin \mathcal P_1}w_j}
& \text{if $i=1$}\\
  - w_1  + \displaystyle{\sum_{j\geq 3,\ j\notin \mathcal P_2}w_j}& \text{if $i=2$}\\
  w_j 
  & \text{if $v_1+v_i = v_j$ or $v_2+v_i = v_j$}\\
  w_i 
  & \text{otherwise}
 \end{cases}
\]
where $\mathcal P_j := \{i\geq 3\, :\, \{v_j,v_i\}\text{ is primitive}\}$, for
$j = 1,2$.
\end{lemma}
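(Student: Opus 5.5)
The plan is to make the involution explicit in Cox coordinates and read off the images of the divisors $D_i^X$. By Proposition~\ref{prop:facet-irr-correspondence} and its proof, every anticanonical monomial has total degree exactly $2$ in $x_1,x_2$. Hence
\[
f=a\,x_1^2+b\,x_1x_2+c\,x_2^2,
\qquad a,b,c\in\mathbb K[x_3,\dots,x_r].
\]
By Lemma~\ref{lem:conic-bundle}, the restriction $\pi|_X$ is a double cover of $\mathbb P_{\Delta_0}$ (it is generically finite of degree $2$, since $X\cdot F=-K\cdot F=2$). The involution $\sigma$ is the deck involution: on a fibre with coordinates $[x_1:x_2]$ it exchanges the two roots of the binary quadric. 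Concretely, $\sigma$ is given by
\[
(x_1,x_2)\mapsto \Bigl(x_2\,\tfrac{c}{\lambda},\; x_1\,\tfrac{a}{\lambda}\Bigr)
\]
for a suitable rescaling $\lambda$, equivalently by $x_1\mapsto -x_1-\tfrac{b}{a}x_2$ on the affine chart. Either way, $\sigma^*$ fixes the pull-backs of all divisors from the base.

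The next step is to compute $\sigma(w_i)$ for $i\ge 3$. If $v_i$ forms no primitive pair with $v_1$ or $v_2$, then $D_i$ is (up to the obvious identification) a pull-back via $\pi$ of a torus-invariant divisor on $\mathbb P_{\Delta_0}$, namely the one attached to the ray $q(v_i)$. Since $\sigma$ preserves fibres, it fixes the class $w_i$. If instead $v_1+v_i=v_j$ is a primitive relation, then $q(v_i)=q(v_j)$. The pull-back of that base divisor is then $D_i+D_j$ (more precisely, the multiplicities can be read off the relation). On $X$, $\sigma$ swaps the two components $D_i^X$ and $D_j^X$ of this preimage, because over the base divisor the fibre splits into the part where $x_1$ or $x_2$ vanishes. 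This gives $\sigma(w_i)=w_j$, and symmetrically for $v_2+v_i=v_j$.

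To get $\sigma(w_1)$, I would use $\sigma(D_1^X)=\{x_2c=0\}$ pulled back appropriately. On $X\cap\{x_1=0\}$ we have $c\,x_2^2=0$; the residual point of the fibre over a point where $x_1=0$ is where the other root lives. So $\sigma^*D_1^X$ is the divisor of a section of class $[c]-w_2$, minus the components of $V(c)$ that are torus-invariant base divisors. The degree of $c$ is $-K-2w_2=\sum_j w_j-2w_2$. Thus
\[
\sigma(w_1)=\textstyle\sum_j w_j - w_1-2w_2 - (\text{fixed part}).
\]
The fixed part is $\sum_{j\in\mathcal P_1}w_j$: $x_j$ divides $c$ exactly when $\{v_1,v_j\}$ is primitive, with $v_1+v_j=v_k$ forcing $\langle m,v_j\rangle\ge0$ whenever $\langle m,v_1\rangle=-1$. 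This yields the stated formula, and by symmetry the one for $w_2$. A consistency check is that $\sigma(w_1+w_2)$ equals the pull-back class from the base.

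The main obstacle is pinning down the fixed components of $|c|$ (and $|a|$) exactly. I would do this monomially: a monomial $x^m$ appears in $c\,x_2^2$ iff $\langle m,v_1\rangle=-1$. For $j\ge3$, $x_j\mid c$ for all such $m$ iff there is no facet containing both $v_1$ and $v_j$, i.e.\ iff $\{v_1,v_j\}$ is primitive. This argument uses that the primitive relations are degree one and Lemma~\ref{lem:unique-vertex-pair}-type facet arguments. Finally, genericity gives that the moving part of $c$ is irreducible and reduced, so there are no further multiplicities.
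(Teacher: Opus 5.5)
Your proposal is correct and is essentially the paper's argument: the same splitting $f=ax_1^2+bx_1x_2+cx_2^2$, the same case analysis for $i\ge 3$ ($D_i^X$ is $\sigma$-invariant, or $\sigma$ swaps $D_i^X$ and $D_j^X$ when $v_1+v_i=v_j$), and the same key identity $D_1^X+\sigma(D_1^X)=\operatorname{div}_X\bigl(c/\prod_{j\in\mathcal P_1}x_j\bigr)$. Two things to fix. First, correct the slip ``class $[c]-w_2$'': the residual of $D_1^X$ has class $[c]-w_1-\sum_{j\in\mathcal P_1}w_j$, which is what your final formula actually uses. Second, the fact that each $x_j$ with $j\in\mathcal P_1$ divides $c$ exactly once is combinatorial, not a genericity statement: with $v_1+v_j=v_k$, the exponent of $x_j$ in $x^m$ is $\langle m,v_k\rangle+2$, which equals $1$ for the normal $m$ of a facet containing both $v_1$ and $v_k$, and such a facet exists because otherwise $v_k$ would be the midpoint of $v_j$ and $2v_1+v_j$.
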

\begin{proof}
The hypotheses guarantee the isomorphism ${\rm Cl}(X)\simeq{\rm Cl}(\mathbb{P}_{\Delta})$.
Working in Cox coordinates
$\mathbb{K}[x_{1},\dots,x_{r}]$ ordered so that
$x_{i}$ correspond to $D_{i}$,
an anticanonical section takes the form
\[
x_{1}^{2}f_{11}+x_{1}x_{2}f_{12}+x_{2}^{2}f_{22}=0.
\]
Let $C$ be the general fiber of the morphism 
$\pi\colon\mathbb{P}_{\Delta}\longrightarrow\mathbb{P}_{\Delta_{0}}$, given in Lemma~\ref{lem:conic-bundle}.
Recall that $C$ is a conic in the anticanonical embedding and that the intersection product $D_i\cdot C$ is $1$ if $i\in\{1,2\}$ and $0$ otherwise,
by~\cite[Prop.~2.4.4]{ba}. Thus each $D_i$, with $i\geq 3$ is swept by either conics, rationally equivalent to $C$, or lines, which are components of such a conic.
In the first case $D_i$ meets both $D_1$ and $D_2$ so that $\sigma(D_i^X) = D_i^X$. In the second case $D_i$ meets only one of $D_1$, $D_2$, let us say $D_2$. Equivalently $\{v_1,v_i\}$ is a primitive pair, which must have degree $1$. Thus $v_1+v_i = v_j$ and the divisors $D_i$, $D_j$ are mapped by $\pi$ to the same prime invariant divisor of $\mathbb{P}_{\Delta_{0}}$.
In this case we deduce $\sigma(D_i^X) = D_j^X$.
It remains to compute the action of $\sigma$ on $D_1^X$ and $D_2^X$.
Observe that the divisor $D_1^X$ is cut out on $\mathbb{P}_\Delta$ by the equations $x_1 = f_{22} = 0$.
Let $i\in \mathcal P_1$, i.e.
$\{v_1,v_i\}$ is a primitive pair of degree $1$, so that $v_1 + v_i = v_j$ for some $j\geq 3$.
If $x^m$ is a monomial of $f_{22}$, then $\langle m,v_1\rangle = -1$ so that $\langle m,v_i\rangle
= \langle m,v_j\rangle + 1$. This implies that $x^m$
is divisible by $x_i$ and that, after factoring out $x_i$ we get a monomial which contains $x_i$ and $x_j$ with the same power. 
If we define
\[
g := \frac{f_{22}}{\prod_{i\in\mathcal P_1} x_i},
\]
by the above discussion $g\in\mathbb K[x_3,\dots,x_r]$
and each of its monomials is invariant under the action of $\sigma$.
Observe that $D_1^X$ is cut out by $x_1 = g = 0$ and its irreducibility implies that $g$ is irreducible. 
We conclude that $D_1^X + \sigma(D_1^X)$ is cut out on $X$ by the equation $g = 0$, which has degree 
$-K_X - 2w_2 - \sum_{i\in \mathcal P_1}w_i = w_1 - w_2 + \sum_{i\notin \mathcal P_1}w_i$. Therefore
\[
 w_1 + \sigma(w_1)
 =
 -K_X - 2w_2 - \sum_{i\in \mathcal P_1}w_i = w_1 - w_2 + \sum_{i\notin \mathcal P_1}w_i,
\]
from which we deduce the image $\sigma(w_1)$ given in the statement.
We conclude reasoning in the same way for $\sigma(w_2)$.

\end{proof}

\begin{lemma}
\label{lem:i*}
Let $\Delta\subseteq N_{\mathbb{Q}}$ be a smooth Fano polytope, let 
$\iota \colon N_F \hookrightarrow N$ 
be a sublattice
and set $\Delta_F := (N_F)_{\mathbb Q} \cap \Delta$. Then 
\[
\Delta_F^\circ=\iota^*(\Delta^\circ),
\]
where $\iota^*\colon M_{\mathbb Q}\twoheadrightarrow (M_F)_{\mathbb Q}\simeq M_{\mathbb Q}/(N_F^{\perp})_{\mathbb Q}$ is 
the dual projection,
\end{lemma}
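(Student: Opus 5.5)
This is the standard duality between sections and projections. The plan is to prove the two inclusions separately, using the definition of the polar
\[
\Delta^\circ=\{u\in M_{\mathbb Q}\mid \langle x,u\rangle\ge -1\ \text{for all } x\in\Delta\}
\]
and the adjunction $\langle \iota(y),u\rangle=\langle y,\iota^*u\rangle$ for $y\in (N_F)_{\mathbb Q}$ and $u\in M_{\mathbb Q}$. Since $0$ is interior to $\Delta$, it is also interior to $\Delta_F$ relative to $(N_F)_{\mathbb Q}$, so $\Delta_F^\circ$ is a well-defined compact polytope in $(M_F)_{\mathbb Q}$.

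\textbf{Inclusion $\iota^*(\Delta^\circ)\subseteq\Delta_F^\circ$.} Take $u\in\Delta^\circ$ and $y\in\Delta_F$. Then $\iota(y)\in\Delta$, so
\[
\langle y,\iota^*u\rangle=\langle\iota(y),u\rangle\ge -1 .
\]
Hence $\iota^*u\in\Delta_F^\circ$. This direction is immediate.

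\textbf{Inclusion $\Delta_F^\circ\subseteq\iota^*(\Delta^\circ)$.} This is the main step. The set $\iota^*(\Delta^\circ)$ is the image of a polytope under a linear map, so it is a polytope containing $0$ in its interior. By the bipolar theorem it therefore suffices to show
\[
\bigl(\iota^*(\Delta^\circ)\bigr)^\circ\subseteq\Delta_F ,
\]
because polarity reverses inclusions and $\bigl(\iota^*(\Delta^\circ)\bigr)^{\circ\circ}=\iota^*(\Delta^\circ)$. So let $y\in(N_F)_{\mathbb Q}$ satisfy $\langle y,\iota^*u\rangle\ge-1$ for every $u\in\Delta^\circ$. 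By adjunction this says $\langle\iota(y),u\rangle\ge -1$ for all $u\in\Delta^\circ$, i.e. $\iota(y)\in(\Delta^\circ)^\circ=\Delta$. Since also $\iota(y)\in(N_F)_{\mathbb Q}$, we get $y\in\Delta_F$.

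Alternatively, one could argue directly by separation. Suppose $w\in\Delta_F^\circ$ and $w\notin\iota^*(\Delta^\circ)$. The latter is a closed convex set, so there is a separating $y\in(N_F)_{\mathbb Q}$, which after scaling satisfies $\langle y,\iota^*u\rangle\ge -1$ on $\Delta^\circ$ and $\langle y,w\rangle<-1$. The first condition gives $y\in\Delta_F$ exactly as above, and then the second contradicts $w\in\Delta_F^\circ$. Either way, the obstacle is only checking that bipolarity applies, i.e. that $0$ is interior both to $\Delta$ and to the projection $\iota^*(\Delta^\circ)$. The second follows from the first, since $\iota^*$ is surjective and open.

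Smoothness of $\Delta$ plays no role; only $0\in\operatorname{int}\Delta$ is used. Working over $\mathbb Q$ causes no difficulty, since all sets involved are rational polyhedra, and one can pass to $\mathbb R$ and back if preferred.
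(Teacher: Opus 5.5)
Your proof is correct. The first inclusion is the same as in the paper; the difference is in how you get $\Delta_F^\circ\subseteq\iota^*(\Delta^\circ)$.

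\textbf{The paper's route.} It separates \emph{upstairs} in $M_{\mathbb Q}$. Given $\lambda\in\Delta_F^\circ$, it takes the affine fiber $\Phi=m_0+(N_F^\perp)_{\mathbb Q}$ of $\iota^*$ over $\lambda$ and assumes $\Phi\cap\Delta^\circ=\varnothing$. It then separates $\Phi$ from $\Delta^\circ$ by some $x\in N_{\mathbb Q}$. Two further steps are needed:
\begin{itemize}
\item $x\in(N_F)_{\mathbb Q}$, since otherwise $\langle\cdot,x\rangle$ would be unbounded on $\Phi$;
\item the separating level can be normalized to $-1$, so that $x\in\Delta_F$ and $\langle\lambda,x\rangle<-1$.
\end{itemize}

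\textbf{Your route.} You work \emph{downstairs} in $(M_F)_{\mathbb Q}$, whose dual is already $(N_F)_{\mathbb Q}$. You apply the bipolar theorem to the closed convex set $\iota^*(\Delta^\circ)\ni 0$. The computation $\bigl(\iota^*(\Delta^\circ)\bigr)^\circ=\Delta_F$, via adjunction and $(\Delta^\circ)^\circ=\Delta$, then does all the work.

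\textbf{Comparison.} Both arguments rest on the same separation principle; your ``alternative'' is essentially the paper's argument transported to the quotient space. Yours is shorter. It avoids having to show that the separating normal lies in $(N_F)_{\mathbb Q}$, and it avoids the paper's rather terse normalization $\alpha=-1$.

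\textbf{Minor remarks.}
\begin{itemize}
\item The bipolar theorem only needs $\iota^*(\Delta^\circ)$ to be closed, convex and to contain $0$. Interiority of $0$ there is true but not needed.
\item In your separation variant, note that the separating constant $\alpha$ satisfies $\alpha\le 0$ because $0\in\iota^*(\Delta^\circ)$. It can then be taken strictly negative before rescaling to $-1$.
\item As you observe, smoothness of $\Delta$ is never used; the paper's proof does not use it either.
\end{itemize}
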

\begin{proof}
If we denote by
$\pi \colon N \to N/N_F$ the projection
and by $\pi^*\colon (N/N_F)_{\mathbb Q}^*\hookrightarrow M_{\mathbb Q}$
the dual injection with image $(N_F^{\perp})_{\mathbb Q}$,
we have the following commutative 
diagrams
\[
\begin{tikzcd}[row sep=large, column sep=huge]
  & \Delta_F \arrow[d, hook] & \Delta \arrow[d, hook] &  & \\
  0 \arrow[r] & (N_F)_{\mathbb{Q}} \arrow[r, "\iota"] & N_{\mathbb{Q}} \arrow[r, "\pi"] &
  (N/N_F)_{\mathbb{Q}} \arrow[r] & 0\\
    0 & (M_F)_{\mathbb{Q}} \arrow[l] & M_{\mathbb{Q}} \arrow[l, "\iota^*"'] & (N_F^{\perp})_{\mathbb{Q}} \arrow[l, hook', "\pi^{*}"'] & 0.\arrow[l] \\
  & \iota^{*}(\Delta^\circ) \arrow[u, hook] & \Delta^\circ \arrow[u, hook] & &
\end{tikzcd}
\]
If $m\in\Delta^\circ$ and $u\in\Delta\cap (N_F)_{\mathbb Q}$, then 
$\langle\iota^*(m),u\rangle=\langle m,u\rangle\ge -1$, hence 
$\iota^*(m)\in\Delta_F^\circ$. This 
gives the inclusion $\iota^*(\Delta^\circ)\subseteq\Delta_F^\circ$.\\
Conversely, let $\lambda\in\Delta_F^\circ$ and $\Phi=m_0+(N_F^{\perp})_{\mathbb Q}$ be the fiber 
with $\iota^*(m_0)=\lambda$. If $\Phi\cap\Delta^\circ=\varnothing$, the separation theorem 
gives $x\in N_{\mathbb Q}$
and $\alpha\in\mathbb R$ with 
$\langle m,x\rangle<\alpha\le\langle m',x\rangle$ for all 
$m\in\Phi$, $m'\in\Delta^\circ$. 
If $x\notin (N_F)_{\mathbb Q}$ then, moving along $(N_F^{\perp})_{\mathbb Q}$, 
the values $\langle m,x\rangle$ fill all of $\mathbb R$, contradicting the first inequality. 
Hence $x\in (N_F)_{\mathbb Q}$. Moreover, since $\Delta^\circ$ is defined by 
$\langle m,y\rangle\ge -1$ for $y\in\Delta$, the separating hyperplane can be chosen as 
$\langle m,x\rangle=-1$ with $x\in\Delta\cap (N_F)_{\mathbb Q}$, so $\alpha=-1$. 
But then $\langle\lambda,x\rangle<-1$, contradicting $\lambda\in\Delta_F^\circ$. 
Thus $\Phi\cap\Delta^\circ\neq\varnothing$ and any $m^*\in\Phi\cap\Delta^\circ$ satisfies 
$\iota^*(m^*)=\lambda$. Therefore $\Delta_F^\circ\subseteq\iota^*(\Delta^\circ)$,
which concludes the proof.
\end{proof}

\subsection{The proof of Theorem~\ref{thm:2}}

\begin{proof}[Proof of Theorem~\ref{thm:2}]
We consider the two cases separately.

{\bf Case (i)}: $v_1+v_2 = v_3+v_4 = 0$.
By Lemma~\ref{lem:conic-bundle} we get two torus-invariant conic bundles on $\mathbb{P}_\Delta$; restricting to $X$ yields two degree-two morphisms
\[
\pi_{12}\colon X\longrightarrow Y_{12},\qquad \pi_{34}\colon X\longrightarrow Y_{34},
\]
and hence two involutions $\sigma_{12},\sigma_{34}\colon X\to X$ exchanging the sheets.
We are going to consider three different
cases, depending on the number of primitive
pairs involving the vertices $v_1,\dots,v_4$.
In each case considered below, both involutions preserve a subspace $W\subset {\rm Cl}(X)$, and the classes of the two conic bundles show that the quotient ${\rm Cl}(X)/W$ contains a $\sigma_{12}^*,\sigma_{34}^*$–stable rank–two subspace. Proving that the composition has infinite order on this 
plane suffices to conclude the same for ${\rm Cl}(X)$.
Therefore the image of $\operatorname{Bir}(X)$ in $\operatorname{GL}({\rm Cl}(X))$ is infinite.
By~\cite[Thm.~4.2.4.1]{adhl}, if $X$ is a Mori dream space, then the group
${\rm Bir}_2(X)$ of birational automorphisms that are isomorphisms in codimension~$1$
is an affine algebraic group. In particular, it has only finitely many irreducible
components. If $X$ is Calabi--Yau, then ${\rm Bir}(X) = {\rm Bir}_2(X)$. Hence,
whenever $X$ is a Mori dream space, the group ${\rm Bir}(X)$ has finitely many
connected components, and therefore the same is true for its image in
$\operatorname{GL}({\rm Cl}(X))$. This shows that, in each case under
consideration, $X$ cannot be a Mori dream space.
\begin{enumerate}
    \item The only primitive pairs inside $\{v_1,v_2,v_3,v_4\}$ are $\{v_1,v_2\}$ and $\{v_3,v_4\}$.
Take $W:=\langle w_5,\dots,w_r\rangle$ and work in $\langle w_3+w_4,\; w_1+w_2-w_3-w_4\rangle/W$:
\[
\sigma_{12}^* =
\begin{pmatrix*}[r]
 1 & 0 \\
 0 & -1 
 \end{pmatrix*},
\qquad
\sigma_{34}^* =
\begin{pmatrix*}[r]
 1 & 2 \\
 0 & -1\\
\end{pmatrix*}
\]
Their composition has infinite order.

\item The primitive pairs inside 
$\{v_1,v_2,v_3,v_4\}$
are $\{v_1,v_2\}$, $\{v_3,v_4\}$, and $\{v_1,v_3\}$.
After a suitable relabelling we may suppose that 
\[
   v_{1}+v_{3}=v_{5}
  \quad
   \Rightarrow
   \quad
   v_{2}+v_{5} = v_3,
   \qquad   
   v_{4}+v_{5} = v_1.
\]
Take $W:=\langle w_6,\dots,w_r\rangle$ and work in $\langle w_3+w_5,\; w_1+w_2-w_3-w_4\rangle/W$:
\[
\sigma_{12}^* =
\begin{pmatrix*}[r]
 1 & 0 \\
 0 & -1 
 \end{pmatrix*},
\qquad
\sigma_{34}^* =
\begin{pmatrix*}[r]
 1 & 1 \\
 0 & -1\\
\end{pmatrix*}.
\]
Again the composition has infinite order.

\item The primitive pairs inside 
$\{v_1,v_2,v_3,v_4\}$
are $\{v_1,v_2\}$, $\{v_3,v_4\}$, $\{v_1,v_3\}$, and $\{v_2,v_4\}$.
Relabel so that 
\[
\begin{array}{lll}   
   v_{1}+v_{3} =v_{5}
   &  \Rightarrow
   \quad
   v_{2}+v_{5} = v_3,
   & \quad   
   v_{4}+v_{5} = v_1,\\
      v_{2}+v_{4} =v_{6}
   &  \Rightarrow
   \quad
   v_{1}+v_{6} = v_4,
   & \quad   
   v_{3}+v_{6} = v_2.
\end{array}
\]
Take $W:=\langle w_7,\dots,w_r\rangle$ and work in $\langle w_3+w_4+w_5+w_6,\; w_1+w_2-w_3-w_4\rangle/W$:
\[
\sigma_{12}^* =
\begin{pmatrix*}[r]
 1 & 0 \\
 0 & -1 
 \end{pmatrix*},
\qquad
\sigma_{34}^* =
\begin{pmatrix*}[r]
 1 & 2 \\
 0 & -1\\
\end{pmatrix*}.
\]
The composition has infinite order.
\end{enumerate}
\vspace{5mm}

{\bf Case~(ii)}. $v_1 + v_2 = v_3 + v_4 - v_1 = 0$.
Let $\iota \colon N_F \hookrightarrow N$ be the sublattice generated by these four vertices, and set
\[
\Delta_F := (N_F)_{\mathbb{Q}} \cap \Delta .
\]
By construction, $\Delta_F$ contains $v_1,\dots,v_4$ among its vertices, although in principle it may have one or two additional vertices. Thus we can write $v_1,\dots,v_k$ for the vertices of $\Delta_F$, with
$4 \leq k \leq 6$.
In particular, the toric surface $\mathbb{P}_{\Delta_F}$ associated with the spanning fan $\Sigma_F$ of $\Delta_F$ is the blow-up of $\mathbb{F}_1$ at $k-4$ general points, i.e.\ a del Pezzo surface of degree $12-k$.
Since $v_1,\dots,v_k$ are also vertices of $\Delta$, the inclusion of fans $\Sigma_F \subset \Sigma$ induces an open embedding
\[
U := \mathbb{P}_{\Delta_F} \times (\mathbb{C}^*)^{n-2} \hookrightarrow \mathbb{P}_\Delta .
\]
By Lemma~\ref{lem:i*} we have
$\Delta_F^\circ = \iota^*(\Delta^\circ)$.
This does not necessarily imply that every lattice point of $\Delta_F^\circ$ is the image of a lattice point of $\Delta^\circ$, but it does imply that the vertices of $\Delta_F^\circ$ and its unique interior point (namely the origin) are.
Observe that $X \cap U$ carries an elliptic fibration
\[
  \pi \colon X \cap U \longrightarrow (\mathbb{C}^*)^{\,n-2},
\]
whose fibers are anticanonical sections of $\mathbb{P}_{\Delta_F}$, and whose zero section is cut out by $x_1 = 0$. Indeed the variable $x_1$ corresponds to the vertex $v_1$, which, on the toric surface, is a torus-invariant $(-1)$-curve, because $v_3+v_4=v_1$. By adjunction each such curve intersects a general anticanonical section in a point
that we fix as the origin of the elliptic
curve.

Our strategy is to show that we can
find a pair $(C,q)$, where $C$ is cut
on $X$ by $\mathbb P_{\Delta_F}$ and 
$q$ is a point of infinite order, which proves that for a general anticanonical hypersurface $X \subset \mathbb{P}_\Delta$ the corresponding section has infinite order in the Mordell–Weil group of the elliptic fibration $\pi$.
Consequently, the Mordell–Weil group of $\pi$ is infinite, and therefore
\[
  \operatorname{Bir}(X) 
  = 
  \operatorname{Bir}(X \cap U) 
\]
is infinite.
We now distinguish three cases, according
to the number $k$ of vertices of $\Delta_F$. 
The following picture shows the reflexive polygons $\Delta_F^\circ$. 
The labels $x_i$ indicate the Cox coordinates associated with the corresponding rays of the spanning fan of $\Delta_F$.

\begin{center}
\begin{tikzpicture}[scale=0.7]
\fill[blue!15]
    (-1,2) -- (0,1) -- (1,0) -- (2,-1) -- (0,-1) -- (-1,0) -- cycle;
\draw[line width=1pt,blue]
    (-1,2) -- (0,1) -- (1,0) -- (2,-1) -- (0,-1) -- (-1,0) -- cycle;
\foreach \P in {(-1,2), (2,-1), (0,-1), (-1,0), (0,1), (1,0), (0,0), (1,-1), (-1,1)}
    \fill[black!70] \P circle (2pt);
\node at (-1.6,0.9) {\tiny $x_4$};
\node at (0.9,0.9) {\tiny $x_2$};
\node at (1.3,-1.5) {\tiny $x_3$};
\node at (-0.8,-0.8) {\tiny $x_1$};
\node at (0,-2) {\tiny $k=4$};
\begin{scope}[xshift = 6cm]
\fill[blue!15]
    (-1,1) -- (0,1) -- (1,0) -- (2,-1) -- (0,-1) -- (-1,0) -- cycle;
\draw[line width=1pt,blue]
    (-1,1) -- (0,1) -- (1,0) -- (2,-1) -- (0,-1) -- (-1,0) -- cycle;
\foreach \P in {(2,-1), (0,-1), (-1,0), (0,1), (1,0), (0,0), (1,-1), (-1,1)}
    \fill[black!70] \P circle (2pt);
\node at (-1.4,0.6) {\tiny $x_4$};
\node at (1.2,0.4) {\tiny $x_2$};
\node at (1.2,-1.5) {\tiny $x_3$};
\node at (-0.8,-0.8) {\tiny $x_1$};
\node at (-0.5,1.3) {\tiny $x_5$};
\node at (0,-2) {\tiny $k=5$};
\end{scope}

\begin{scope}[xshift = 12cm]
\fill[blue!15]
    (-1,1) -- (0,1) -- (1,0) -- (1,0) -- (1,-1) -- (0,-1) -- (-1,0) -- cycle;
\draw[line width=1pt,blue]
    (-1,1) -- (0,1) -- (1,0) -- (1,0) -- (1,-1) -- (0,-1) -- (-1,0) -- cycle;
\foreach \P in {(0,-1), (-1,0), (0,1), (1,0), (0,0), (1,-1), (-1,1)}
    \fill[black!70] \P circle (2pt);
\node at (-1.4,0.6) {\tiny $x_4$};
\node at (0.8,0.6) {\tiny $x_2$};
\node at (0.6,-1.3) {\tiny $x_3$};
\node at (-0.8,-0.8) {\tiny $x_1$};
\node at (-0.5,1.3) {\tiny $x_5$};
\node at (1.4,-0.6) {\tiny $x_6$};
\node at (0,-2) {\tiny $k=6$};
\end{scope}
\end{tikzpicture}
\end{center}

\begin{center}
{\footnotesize The polygons $\Delta_F^\circ$.}
\end{center}

In all the cases a defining polynomial for $X$ in Cox coordinates can be written as
\[
  f = f_1 x_1^2 + f_2 x_1 x_2 + f_3 x_2^2 .
\]

\begin{enumerate}

\item If $k = 4$, then $\mathbb{P}_{\Delta_F}$ is the blow-up of $\mathbb{P}^2$ at one point. 
In this case the ratio $f_3 / x_1$ defines an element of the Cox ring.
We claim that it cuts out a point $q$
on each fiber $C$ of the fibration. 
Indeed the divisor class of $f_3 / x_1$ is $[-K - D_1 - 2D_2]$, where $K$ is the canonical class of $\mathbb{P}_{\Delta_F}$ and $D_i$ is the divisor cut out by $x_i$.
Since this divisor has intersection product $1$ with $-K$, the element $f_3 / x_1$ defines a rational section of the elliptic fibration $\pi$.
To describe these points explicitly, put
\[
x_1 = u,\quad x_2 = x_4 = 1,\quad x_3 = v.
\]
Recall that the restriction of the anticanonical linear system 
$|-K_{\mathbb P_\Delta}|$ to $\mathbb{P}_{\Delta_F}$ contains the monomials corresponding to the four vertices of $\Delta_F^\circ$ together with the interior point.
A defining equation for such a curve has the form
\[
  \alpha_0 u^2 + \alpha_1 u^2 v^3 + \alpha_2 u v + \alpha_3 v + \alpha_4 = 0 .
\]
The point $p$ is cut out by $u = 0$, hence $p = (0, -\alpha_4/\alpha_3)$.  
The point $q\in C$ is cut out by
\[
  \frac{f_3}{x_1} = \frac{\alpha_3 v + \alpha_4}{u}
  = -\alpha_0 u - \alpha_1 u v^3 - \alpha_2 v.
\]
Choosing for instance
$(\alpha_0,\alpha_1,\alpha_2,\alpha_3,\alpha_4) = (1,\,1,\,-1,\,1,\,-1)$,
the resulting curve $C$ is smooth.
With $p$ as the origin of the group law on $C$, the point $q$ has infinite order.
Indeed, over $\mathbb{Q}$ the curve is isomorphic to the elliptic curve with LMFDB label \texttt{242.b2}, whose Mordell–Weil group is isomorphic to $\mathbb{Z}$.

\item If $k = 5$, then $\mathbb{P}_{\Delta_F}$ is the blow-up of $\mathbb{P}^2$ at two points. 
To describe these points explicitly, put
\[
x_1 = u,\quad x_2 = x_3 = x_5 = 1,\quad x_4 = v.
\]
The restriction of the anticanonical linear system 
$|-K_{\mathbb P_\Delta}|$ to $\mathbb{P}_{\Delta_F}$ contains the monomials corresponding to the five vertices of $\Delta_F^\circ$ together with the interior point.
A defining equation for such a curve has the form
\[
  \alpha_0 u^2 v^3 + \alpha_1 u^2 v + \alpha_2 uv + \alpha_3 u + \alpha_4 v + \alpha_5 = 0 .
\]
The point $p$ is cut out by $u = 0$, hence $p = (0, -\alpha_5/\alpha_4)$,
while $q$ is cut out by $v = 0$, so that
$q = (-\alpha_5/\alpha_3,0)$.
Choosing for instance
$(\alpha_0,\dots,\alpha_5) = (1,\,1,\,2,\,1,\,1,\,1)$,
the resulting curve $C$ is smooth.
With $p$ as the origin of the group law on $C$, the point $q$ has infinite order.
Indeed, over $\mathbb{Q}$ the curve is isomorphic to the elliptic curve with LMFDB label \texttt{91.a1}, 
whose Mordell–Weil group is isomorphic to $\mathbb{Z}$.

\item If $k = 6$, then $\mathbb{P}_{\Delta_F}$ is the blow-up of $\mathbb{P}^2$ at three points. 
To describe these points explicitly, put
\[
x_1 = u,\quad x_2 = x_3 = x_5 = x_6 =1,\quad x_4 = v.
\]
The restriction of the anticanonical linear system 
$|-K_{\mathbb P_\Delta}|$ to $\mathbb{P}_{\Delta_F}$ contains the monomials corresponding to the five vertices of $\Delta_F^\circ$ together with the interior point.
A defining equation for such a curve has the form
\[
  \alpha_0 u^2 v^2 + \alpha_1u^2v + \alpha_2 uv^2 + \alpha_3 uv + \alpha_4 u + \alpha_5 v + \alpha_6 = 0 .
\]
The point $p$ is cut out by $u = 0$, hence $p = (0, -\alpha_6/\alpha_5)$,
while $q$ is cut out by $v = 0$, so that
$q = (-\alpha_6/\alpha_4,0)$.
Choosing for instance
$(\alpha_0,\dots,\alpha_6) = (1,\,1,\,1,\,1,\,1,\,1,\,-1)$,
the resulting curve $C$ is smooth.
With $p$ as the origin of the group law on $C$, the point $q$ has infinite order.
Indeed, over $\mathbb{Q}$ the curve is isomorphic to the elliptic curve with LMFDB label \texttt{58.a1}, 
whose Mordell–Weil group is isomorphic to $\mathbb{Z}$.
\end{enumerate}

\end{proof}

\section{Proof of Theorem~\ref{cor:1}}
\label{sec:5}

In this section we complete the proof of Theorem~\ref{cor:1}. 
The cases covered by Theorems~\ref{thm:1} and~\ref{thm:2} are first identified computationally. 
For the remaining cases, we prove that the corresponding Calabi--Yau hypersurfaces are Mori dream spaces by showing that their effective cones are rational polyhedral. 
The main ingredient is a numerical criterion ensuring that a given facet of a candidate cone lies on the boundary of the effective cone: the criterion is obtained by restricting divisor classes to suitable smooth surfaces and applying the Hodge index theorem. 
We then turn this criterion into the algorithm \textsc{TestFace}, which checks the required negativity condition by means of intersection computations in the Chow ring of the ambient toric Fano variety. 
Finally, we apply this procedure to the residual cases: after possibly enlarging the ambient effective cone by the action of the birational involutions arising from primitive pairs of degree~$2$, we verify that every facet satisfies the criterion. 
This proves the equality between the candidate cone and the effective cone of $X$, hence the Mori dream property in all remaining cases. 
The computation is illustrated explicitly for the index~$35$, while the remaining data are collected in Appendix~\ref{app:A}.

\subsection{A criterion for the effective cone}
In order to prove Theorem~\ref{cor:1}, we are going
to show that all cases not covered by 
Theorems~\ref{thm:1} and~\ref{thm:2} are Mori dream spaces. We use the fact
that a Calabi--Yau variety of dimension at most three is a Mori dream space if and only if its 
effective cone is polyhedral~\cite[Cor. 4.5]{mk}.
To establish the polyhedrality of the effective cone in our setting, we 
have the following sufficient criterion.

\begin{proposition}
\label{pro:eff}
Let $X$ be a smooth projective variety of dimension $n\geq 3$, and let
$\mathcal C\subseteq {\rm Eff}(X)$ be a rational polyhedral cone.
\begin{itemize}
    \item[(i)] 
     Let $F$
     be a facet of $\mathcal C$
     and assume there exists a smooth irreducible surface $\iota\colon S \hookrightarrow X$ such that the following hold:
\begin{enumerate}
\item every effective divisor on $X$ restricts to an effective divisor on $S$;
\item 
if we denote by $\iota^* \colon N^1(X)_\mathbb{R} \longrightarrow N^1(S)_\mathbb{R}$,
the pullback, 
the intersection form on 
$\langle\iota^*(F)\rangle$
is negative (semi)definite and non-trivial.
\end{enumerate}
Then $F$ is contained in a facet of 
${\rm Eff}(X)$.
\item[(ii)] If every facet $F$ of $\mathcal C$ satisfies $(i)$, then
$\mathcal C = {\rm Eff}(X)$.
\end{itemize}

\end{proposition}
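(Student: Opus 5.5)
The plan is to argue by contradiction for (i), and then deduce (ii) from (i) by a convexity argument.

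\emph{Step 1 (reduction for (i)).} Suppose the facet $F$ is not contained in a facet of ${\rm Eff}(X)$. Since $\mathcal C\subseteq{\rm Eff}(X)$ and both cones are full dimensional and closed, some point of the relative interior of $F$ then lies in the interior of ${\rm Eff}(X)$. Choose a rational class $D$ in the relative interior of $F$ which is big, so $D\in{\rm Big}(X)=\operatorname{int}{\rm Eff}(X)$. Fix an ample class $A$ on $X$. For $0<\varepsilon\ll 1$ the class $D-\varepsilon A$ is still effective, and after clearing denominators we may write $mD=\varepsilon' A+E$ with $E$ an effective divisor.

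\emph{Step 2 (restriction to $S$).} Pull back along $\iota$. By hypothesis (1), $\iota^*E$ is effective on $S$, while $\iota^*A$ is ample on $S$. Hence $\iota^*D$ is big on $S$. Moreover every class of $F$ near $D$ is big on $X$, so a whole relatively open neighbourhood of $\iota^*D$ in the cone $\iota^*(F)$ consists of big classes on $S$. The contradiction should then come from hypothesis (2): all these classes lie in the subspace $V:=\langle\iota^*(F)\rangle$, on which the intersection form is negative (semi)definite and non-trivial.

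\emph{Step 3 (main obstacle).} The key point is a surface lemma: a negative semidefinite, non-trivial subspace $V\subseteq N^1(S)_\mathbb R$ cannot contain an open subset of $V$ made of big classes. My first attempt would use Zariski decomposition. Write $\iota^*D=P+N$ with $P$ nef, $P^2>0$ and $P\cdot N=0$. Varying $\iota^*D$ in an open set of $V$, I would try to show that the nef parts $P$ span a positive direction that must meet $V$, which contradicts negativity. Alternatively I would use the Hodge index theorem on $S$, pairing with the ample class $\iota^*A$ and decomposing $\iota^*D=\varepsilon'\iota^*A/m+\iota^*E/m$ to force $(\iota^*D)^2>0$ for suitable choices.

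I expect this step to be the genuine difficulty. A single big class can have negative square, for instance $H+10E$ on a blow-up of $\mathbb P^2$. So when $\dim V=1$ the argument must exploit more than the ray itself. Possibly it needs the precise form of (2), namely that the whole span of $\iota^*(F)$ is negative, together with the fact that the fixed part $N$ moves with $D$. If necessary I would strengthen the use of (1) to effective classes in all of $\langle F\rangle$.

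\emph{Step 4 (part (ii)).} Suppose every facet $F$ of $\mathcal C$ lies in a facet of ${\rm Eff}(X)$. Then each supporting hyperplane $H_F$ of $\mathcal C$ is a supporting hyperplane of ${\rm Eff}(X)$, with ${\rm Eff}(X)$ on the same side as $\mathcal C$, because $\mathcal C\subseteq{\rm Eff}(X)$. Therefore
\[
{\rm Eff}(X)\subseteq\bigcap_F H_F^{+}=\mathcal C ,
\]
and hence $\mathcal C={\rm Eff}(X)$.
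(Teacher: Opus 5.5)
\textbf{The gap is in Step 3, and it cannot be closed.} Your surface lemma is false, and in fact statement (i) as formulated is false. Your own class $H+10E$ already gives a counterexample.

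Let $S_0=\mathrm{Bl}_p\mathbb P^2$, $X=S_0\times\mathbb P^1$ and $S=S_0\times\{t\}$. Write $p$ for the pullback of a point of $\mathbb P^1$, and identify classes on $S_0$ with their pullbacks to $X$.
\begin{itemize}
\item Since $X$ is toric, ${\rm Eff}(X)=\mathrm{Cone}(E,\,H-E,\,p)$.
\item $\iota^*$ is the identity on classes coming from $S_0$ and kills $p$. Hence $\iota^*{\rm Eff}(X)\subseteq{\rm Eff}(S)$, which is the form in which (1) is actually used.
\item Take $\mathcal C=\mathrm{Cone}(H+10E,\,E,\,p)\subseteq{\rm Eff}(X)$ and its facet $F=\mathrm{Cone}(H+10E,\,p)$.
\item Then $\langle\iota^*F\rangle=\mathbb R(H+10E)$ with $(H+10E)^2=-99$, so (2) holds.
\item Yet $H+10E+p=(H-E)+11E+p$ lies in the interior of the simplicial cone ${\rm Eff}(X)$, so $F$ is contained in no facet of ${\rm Eff}(X)$.
\end{itemize}
Going to $\dim V\ge 2$ does not help either. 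On $\mathrm{Bl}_2\mathbb P^2$, every nonzero class of $\mathrm{Cone}(H+2E_1,H+2E_2)$ is big, while the Gram matrix $\begin{pmatrix}-3&1\\1&-3\end{pmatrix}$ is negative definite. So neither Zariski decomposition nor a stronger use of (1) can finish Step 3 without an extra hypothesis. Steps 1, 2 and 4 are fine, except that in Step 1 you should work with the closed cone $\overline{\rm Eff}(X)$.

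\textbf{How the paper argues, and where it fails for the same reason.} The paper does not argue by contradiction. It claims, citing the Hodge index theorem, that there is a nef class $\alpha_F$ on $S$ with $\langle\iota^*F\rangle\subseteq\alpha_F^\perp$. Then $D\mapsto\alpha_F\cdot\iota^*D$ is non-negative on ${\rm Eff}(X)$ and vanishes on $F$. Part (ii) is then deduced exactly as in your Step 4, which is correct. However, Hodge index only produces a class of positive square in $V^\perp$, not a nef one. The example above defeats this step too: $V^\perp=\mathbb R(10H+E)$, with $(10H+E)\cdot E=-1$ and $-(10H+E)\cdot H=-10$, so no nonzero nef class is orthogonal to $V$.

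\textbf{What rescues the argument.} The supporting-functional argument becomes correct under an additional hypothesis. For instance, suppose $\langle\iota^*F\rangle$ is spanned by classes of irreducible curves $C_1,\dots,C_k$ with negative definite intersection matrix $M$. For an ample $A$ on $S$, the vector $t=(-M)^{-1}(A\cdot C_j)_j$ has non-negative entries, because $-M$ is positive definite with non-positive off-diagonal entries. Then $\alpha=A+\sum_i t_iC_i$ is nef, nonzero, and orthogonal to every $C_j$. In the semidefinite case one needs instead, for example, that the isotropic direction of $V$ is spanned by a nef class. If you add such a hypothesis, argue directly with this functional rather than through bigness.
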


\begin{proof}
We prove $(i)$. Let $F\subseteq \mathcal C$ be a facet and let
$\iota \colon S \hookrightarrow X$ 
be a surface as in the statement.  
By the Hodge index theorem on $S$, any nonzero negative (semi)definite 
subspace is contained in the orthogonal of some nef class.  
Thus there exists a nef class 
\[
\alpha_F \in N^{1}(S)_{\mathbb{R}}
\quad\text{such that}\quad
\langle \iota^{*}(F)\rangle \subseteq \alpha_F^\perp .
\]
Since $\alpha_F$ is nef and every effective divisor on $X$ remains effective on
$S$, the functional 
\[
\ell_F(D):=\alpha_F\cdot\iota^{*}(D)
\]
is nonnegative on ${\rm Eff}(X)$ and vanishes on $F$.  
Hence $F\subseteq {\rm Eff}(X)\cap \ell_F^\perp$, a facet of ${\rm Eff}(X)$.

We now prove $(ii)$. It is enough to 
show that if $E$ is a class that
does not belong to $\mathcal C$, then
$E$ is not effective.
Observe that, using the notation above,
if $E\notin \mathcal C$, then 
for some facet $F$ of $\mathcal C$ we have $\ell_F(E)<0$.  
But on the other hand, 
$\ell_F(D)\ge 0$ for every effective divisor class $D$, because 
$\alpha_F$ is nef and 
$\iota^{*}({\rm Eff}(X))\subseteq {\rm Eff}(S)$.  
Thus $E$ is not effective.
\end{proof}

\subsection{The algorithm \textsc{TestFace}}
We now apply the proposition above 
to smooth toric Fano fourfolds.  
Let $\Delta$ be a smooth Fano polytope of dimension $4$, let 
$\mathbb{P}_\Delta$ be the associated toric variety, and let 
$X \subseteq \mathbb{P}_\Delta$ be a smooth general anticanonical hypersurface.  
We write
\[
\mathcal R(\mathbb{P}_\Delta) = \mathbb K[x_1,\dots,x_r]
\]
and denote by $w_i \in N^1(\mathbb{P}_\Delta)_\mathbb{R}$ the class of the
torus–invariant prime divisor $\{x_i=0\}$, i.e.\ the degree of the variable
$x_i$.  
Given a suitable rational polyhedral cone
$\mathcal C  \subseteq 
\operatorname{Eff}(X)$,
we use repeatedly the following algorithm,
based on Proposition~\ref{pro:eff},
to test whether $\mathcal C$ coincides with $\operatorname{Eff}(X)$.

\vspace{4mm}

\begin{algorithm}[H]
\label{alg}
\DontPrintSemicolon
\caption{\textsc{TestFace} — Certifies whether $F$ lies on the boundary of $\mathrm{Eff}(X)$}

\KwIn{a smooth toric Fano variety $\mathbb{P}_\Delta$; a cone $F \subseteq 
\mathrm{Eff}(X)\subseteq \mathrm{Cl}(X)
=\mathrm{Cl}(\mathbb{P}_\Delta)$}
\KwOut{\textbf{true} or \textbf{false}}

\BlankLine

Compute the divisor class group $\mathrm{Cl}(\mathbb{P}_\Delta)$, the classes
$w_1,\dots,w_r \in \mathrm{Cl}(\mathbb{P}_\Delta)$ of the torus--invariant prime divisors,
primitive generators $\{\omega_1,\dots,\omega_s\}$ of the extremal rays of $F$,
the anticanonical class
$[X] := \sum_{i=1}^r w_i$,
and the movable cone
\[
\mathrm{Mov}(\mathbb{P}_\Delta)
:= \bigcap_{i=1}^r \mathrm{Cone}(w_1,\dots,\widehat{w_i},\dots,w_r).
\]
Let $\mathrm{Rays}\bigl(\mathrm{Mov}(\mathbb{P}_\Delta)\bigr)$ denote the set of primitive generators of the extremal rays of $\mathrm{Mov}(\mathbb{P}_\Delta)$.\;

\BlankLine

Set a boolean flag \emph{test} to \textbf{false}.\;

\BlankLine

\ForEach{class $A \in \mathrm{Rays}\bigl(\mathrm{Mov}(\mathbb{P}_\Delta)\bigr)$}{
  Compute the intersection matrix
  \[
    q_{A} := \bigl(A \cdot X \cdot \omega_i \cdot \omega_j\bigr)_{i,j=1}^s.
  \]\;

  \If{$ \mathrm{rank}(q_{A}) > 0$ \textbf{and} $q_{A}$ is negative semidefinite}{
    Set the boolean flag \emph{test} to \textbf{true} and \textbf{break}.\;
  }
}

\BlankLine
\Return \emph{test}.\;
\end{algorithm}

\begin{proof}[Proof of correctness of \textsc{TestFace}]
Let $F = \mathrm{Cone}(\omega_1,\dots,\omega_s)\subset {\rm  Eff}(X)$, and 
assume the algorithm returns \textbf{true}.  
Then there exists a class 
$A$ spanning an extremal ray of $\mathrm{Mov}(\mathbb{P}_\Delta)$ such that
\[
\bigl(A \cdot X \cdot \omega_i \cdot \omega_j\bigr)_{i,j=1}^s
\]
is negative (semi)definite
and non-trivial. Take a general hypersurface $Y \in |A|$ and set $S := X \cap Y$.  
Since $A \in \mathrm{Mov}(\mathbb{P}_\Delta)$, a general member of $|A|$ is smooth 
and base–point–free in codimension~$1$; hence $S$ is smooth and irreducible by 
Bertini, and every effective divisor
on $X$ restricts to an effective
divisor on $S$. Moreover,
by construction,
\[
(A\cdot X \cdot \omega_i \cdot 
\omega_j)
   \;=\;
\iota^*(\omega_i)\cdot \iota^*(\omega_j)\quad\text{on } S,
\]
so the matrix above 
is precisely the intersection form on 
on \(\langle\iota^*(F)\rangle\).
Therefore the hypotheses of  Proposition~\ref{pro:eff} (i) hold  for the facet $F$.  
\end{proof}

\subsection{Proof of Theorem~\ref{cor:1}}
\begin{proof}[Proof of Theorem~\ref{cor:1}]
We begin with the case $n=3$, corresponding to the indices $6 \le i \le 23$.
Using the Magma implementation available at
\begin{center}
\url{https://github.com/alaface/Calabi-Yau-hypersurfaces},
\end{center}
we verify that for each index $i$ in this range the associated smooth general
anticanonical hypersurface
$X \subset \mathbb{P}_{\Delta}$ satisfies the hypotheses of either
Theorem~\ref{thm:1} or Theorem~\ref{thm:2}.
Hence all cases in dimension~$3$ are completely classified by these two results,
and the corresponding indices are listed in Table~\ref{tab:34}.

We now turn to the case $n=4$, corresponding to the indices $24 \le i \le 147$.
The same script shows that all but the following cases 
\[
\{33, 34, 35, 38, 54, 93, 94, 104, 110, 117, 132, 133\}
\]
are settled directly by
Theorems~\ref{thm:1} and~\ref{thm:2}.
For each of these indices we first construct the cone $\mathcal C$ generated by the restrictions to $X$ of
the divisor classes spanning the effective cone ${\rm Eff}(\mathbb{P}_{\Delta})$.
If $X$ admits a birational involution
$\sigma$ arising from a primitive pair of degree~$2$, we enlarge $\mathcal C$ by
adding the images under $\sigma$ of its generators.
For each facet $F$ of $\mathcal C$
we verify that either $F$ itself, or its image $\sigma^*(F)$ (when the involution
$\sigma$ exists), satisfies the negativity condition required by 
Algorithm~\ref{alg}.
This is sufficient to conclude that $\mathcal C = {\rm Eff}(X)$
and thus that $X$ is a Mori dream space by~\cite[Cor. 4.5]{mk}.
In Example~\ref{ex:35}, we perform
all the computations explicitly for the index $35$.
The data for the remaining indexes are collected in Appendix~\ref{app:A}.
In this way, all cases except $i = 117$ are settled.
The latter case has been treated separately in
Example~\ref{ex:117}.
\end{proof}


\begin{example}
\label{ex:35}
In this example we illustrate in detail case $i = 35$, i.e.
$\Delta \subset \mathbb Q^4$ is the smooth Fano lattice polytope whose vertices are
the columns of
\[
\begin{pmatrix*}[r]
 1 & 0 & 0 & 0 & -1 & 0 & 0 \\
 0 & 1 & 0 & 0 & -1 & 1 & 0 \\
 0 & 0 & 1 & 0 & 0 & -1 & 0 \\
 0 & 0 & 0 & 1 & 2 & 0 & -1
\end{pmatrix*}.
\]
Let $\mathbb P_\Delta$ be the associated smooth projective toric Fano fourfold, and denote by
$D_1,\dots,D_7$ its prime torus--invariant divisors. The Cox ring is a polynomial ring with one variable for each ray, and it is graded by
${\rm Cl}(\mathbb P_\Delta)\simeq \mathbb Z^3$. The classes of the divisors $D_i$ in the
class group are encoded by the grading matrix whose $i$--th column represents the class
of $D_i$:
\[
\begin{pmatrix*}[r]
0 & 0 & 0 & 1 & 0 & 0 & 1\\
1 & 0 & 1 & 0 & 1 & 1 & 2\\
1 & 1 & 0 & 0 & 1 & 0 & 2
\end{pmatrix*}.
\]
In particular the extremal rays of the
effective cone $\rm Eff(\mathbb P_\Delta)$ are generated by the classes 
of $D_4,D_3$ and $D_2$.
Observe now that there are two primitive pairs among the vertices of $\Delta$, giving the relations
$v_3+v_6=v_2$ and $v_4+v_7=0$, of degrees $1$ and $2$ respectively. By Lemma~\ref{lem:action}, the primitive relation of degree~$2$ induces a birational
involution $\sigma\in {\rm Bir}(X)$ on a general anticanonical hypersurface
$X\subset \mathbb P_\Delta$. Via the identification
${\rm Cl}(X)\simeq {\rm Cl}(\mathbb P_\Delta)$ given by restriction, we view $\sigma^*$
as acting on divisor classes coming from $\mathbb P_\Delta$. In the basis
$(D_4,D_3,D_2)$, its action is represented by
\[
\begin{pmatrix*}[r]
 -1 & 0 & 0 \\
  2 & 1 & 0 \\
  1 & 0 & 1
\end{pmatrix*},
\]
so that $\sigma^*(D_4)=-D_4+2D_3+D_2$ 
is effective. Following the proof of Theorem~\ref{cor:1},
we consider the cone 
$\mathcal C =
\langle
D_4,\,D_3,\, D_2,\,\sigma^*(D_4)
\rangle$,
where by abuse of notation we use the same
symbol $D_i$ to denote its restriction 
to $X$.
We are now going to check that for any
facet $F$ of $\mathcal C$ there is a 
ray of the cone 
${\rm Mov}(\mathbb P_\Delta)
={\rm Cone}(D_5,D_6,D_7)$, satisfying
the conditions of Algorithm~\ref{alg}.
The facets of $\mathcal C$
can be written as
\[
\langle D_4,D_2\rangle,\ 
\sigma^*(\langle D_4,D_2\rangle), \ 
\langle D_3,D_4\rangle,\ 
\sigma^*(\langle D_3,D_4\rangle).
\]
Let us fix the first facet 
$\langle D_4,D_2\rangle$ and 
the class $D_5\in {\rm Mov}
(\mathbb P_\Delta)$. We have to
compute the matrix
\begin{equation}
\label{eq:q}
(-K_{\mathbb P_\Delta}\cdot D_5\cdot \omega_i
\cdot \omega_j)
\end{equation}
where $\omega_i,\omega_j\in\{D_4,D_2\}$.
The computations are carried out in the Chow ring of the
ambient toric fourfold, which we present directly in terms of the divisor classes $D_i$:
\[
A^\ast(\mathbb P_\Delta)\simeq
\frac{\mathbb Z[D_1,\dots,D_7]}
{\bigl(
D_1-D_5,\;
D_2-D_5+D_6,\;
D_3-D_6,\;
D_4+2D_5-D_7,\;
D_3D_6,\;
D_4D_7,\;
D_1D_2D_5
\bigr)}.
\]
Up to linear equivalence,
$D_1\sim D_5\sim D_2+D_3$, $D_6\sim D_3$ and $D_7\sim D_4+2D_3+2D_2$,
so that we obtain the isomorphic presentation
\[
A^\ast(\mathbb P_\Delta)\simeq
\frac{\mathbb Z[D_2,D_3,D_4]}
{\bigl(
D_3^2,\;
2D_2D_4+2D_3D_4+D_4^2,\;
D_2^3+2D_2^2D_3
\bigr)}.
\]
Using $D_5\sim D_2+D_3$ and
$-K_{\mathbb P_\Delta}\sim 2D_4+6D_3+5D_2$, each entry 
of~\eqref{eq:q} can be
obtained by reducing
\[
(2D_4+6D_3+5D_2)\cdot (D_2+D_3)\cdot
D_i\cdot D_j,
\qquad
i,j \in\{4,2\}
\]
in $A^4(\mathbb P_\Delta)$.
To read off the resulting integer from the reduced presentation, we normalize
$A^4(\mathbb P_\Delta)\simeq\mathbb Z$ by setting $D_1\cdot D_2\cdot D_3\cdot D_4=1$ 
(the rays $\{v_1,v_2,v_3,v_4\}$ span a smooth maximal cone). Using $D_1\sim D_2+D_3$ and $D_3^2=0$
this gives $D_2^2\cdot D_3\cdot D_4 = 1$, and one reduces further to obtain
$D_4^4=-8$. Hence, once a degree--$4$ class is reduced to a multiple of
$D_4^4$, its integral is read off immediately, and we get the matrix
\[
\begin{pmatrix*}[r]
-4 & 1\\
 1 &-2
\end{pmatrix*},
\]
which is negative definite and non-trivial, so that $D_5$ satisfies 
the conditions of 
Algorithm~\ref{alg} for 
$\langle D_4,D_2\rangle$. Therefore
$\langle D_4,D_2\rangle$ is contained
in a facet of $\rm Eff(X)$, and 
hence the same holds for the
second facet $\sigma^*(\langle
D_4,D_2\rangle)$.
If we now consider
$\langle D_3,D_4\rangle$
and the class $D_6\sim D_3
\in {\rm Mov}(\mathbb P_\Delta)$,
we obtain the matrix
\[
\begin{pmatrix*}[r]
0 & 0 \\
0 & -2
\end{pmatrix*},
\]
which satisfies again the conditions
of Algorithm~\ref{alg}, so that 
also the facets $\langle D_3,D_4\rangle$
and $\sigma^*(\langle
D_3,D_4\rangle)$
are on the boundary of $\rm Eff(X)$.
We can conclude, by means of 
Proposition~\ref{pro:eff}, that 
$\mathcal C = {\rm Eff}(X)$.
\end{example}

\begin{remark}
Let $i \colon X \hookrightarrow Z$ denote the inclusion of a smooth ample 
hypersurface in a Mori dream space $Z$.  
A natural obstruction for $X$ to be a Mori dream space is the presence of nef 
divisors on $X$ that are not semiample; equivalently, the Mori cone of curves 
on $X$ does not coincide with the pullback of the Mori cone of $Z$, and 
${\rm Nef}(X)$ is strictly larger than $i^{*}{\rm Nef}(Z)$.

For Calabi--Yau hypersurfaces in Fano manifolds of sufficiently high 
dimension, this obstruction does not occur.  
Indeed, as shown in the Appendix of~\cite{bor}, if $Z$ is a smooth Fano 
variety of dimension $\ge 4$ and $X \in |{-}K_Z|$ is smooth (the argument 
extends to the case in which $X$ has arbitrary singularities), then the map
\[
i_* \colon \mathrm{NE}(X) \longrightarrow \mathrm{NE}(Z)
\]
is an isomorphism.  
By duality it follows that
${\rm Nef}(X) = i^{*}{\rm Nef}(Z)$,
so in particular ${\rm Nef}(X)$ is rational polyhedral and every nef divisor 
on $X$ is semiample.
\end{remark}

\section{Proof of Theorem~\ref{thm:cone}}
\label{sec:6}

This section is devoted to the proof of Theorem~\ref{thm:cone}. 
The goal is to verify the Morrison--Kawamata movable cone conjecture for a family of Calabi--Yau hypersurfaces in smooth toric Fano varieties. 
We first establish an abstract criterion showing that, if two birational involutions act on the nef cone as reflections along two distinguished facets and their composition is unipotent with a one-dimensional fixed eigenspace, then the movable cone is obtained by translating the nef cone under the group generated by these involutions. 
This gives a rational polyhedral fundamental domain for the action on the effective movable cone. 
We then analyze which facets of the ambient nef cone remain on the boundary of the movable cone of the hypersurface. 
This is done by comparing the restriction of the extremal contractions of the toric Fano variety to the anticanonical hypersurface, and by translating the outcome into the language of primitive relations. 
Finally, we apply these criteria to the explicit family appearing in Theorem~\ref{thm:cone}: two primitive pairs of degree~\(2\) provide the required involutions, while the remaining extremal primitive relation gives a facet lying on the boundary of the movable cone. 
This verifies the hypotheses of the cone criterion and proves the theorem.

\subsection{A criterion from two involutions}
The Morrison--Kawamata cone conjecture~\cites{mor,kaw}, for a Calabi--Yau variety $X$,
asserts that the action of
${\rm Bir}(X)$ on the effective movable cone admits a rational polyhedral
fundamental domain. In particular, only finitely many nef chambers should
appear up to birational automorphisms, or equivalently, one expects only
finitely many isomorphism classes of minimal models of $X$ together with a
chamber decomposition of the effective movable cone governed by the action of
${\rm Bir}(X)$.
More precisely, let
\[
\overline{\rm Mov}^{e}(X):=\overline{\rm Mov}(X)\cap {\rm Eff}(X)
\]
be the effective movable cone. Since $X$ is Calabi--Yau, every birational
self-map is an isomorphism in codimension one, and therefore ${\rm Bir}(X)$
acts naturally on ${\rm Eff}(X)$, ${\rm Mov}(X)$, and
$\overline{\rm Mov}^{e}(X)$.

\begin{conjecture}[Morrison--Kawamata movable cone conjecture]
There exists a finite rational polyhedral cone $\Pi \subseteq \overline{\rm Mov}^{e}(X)$
such that
\[
\overline{\rm Mov}^{e}(X) = {\rm Bir}(X)\cdot \Pi,
\]
and such that $\Pi$ is a fundamental domain for the action of ${\rm Bir}(X)$ on
$\overline{\rm Mov}^{e}(X)$.
\end{conjecture}

This conjecture has been verified in several important families of Calabi--Yau
varieties. In the case of varieties of Wehler type, Cantat and Oguiso proved
the birational form of the cone conjecture and described the birational
automorphism group in terms of universal Coxeter groups \cite{co}. Their
approach was later extended by Y\'anez to Calabi--Yau complete intersections of
ample divisors in arbitrary products of projective spaces, where again the
movable cone conjecture is established and the structure of
${\rm Bir}(X)$ is made explicit \cite{yan}. More recently, Wang considered
Calabi--Yau hypersurfaces in Fano manifolds and proved that if $Z$ is a Fano
manifold of dimension at least $4$ whose extremal contractions are all of fiber
type, then for every smooth anticanonical hypersurface $X\in |-K_Z|$ the
movable cone conjecture holds; moreover, $X$ has a unique minimal model up to
isomorphism as an abstract variety \cite{wan}.

The examples considered in this section fit naturally into this circle of
ideas. Our goal is to show that, under suitable assumptions on the nef cone of
$X$ and on the birational involutions arising from two of its facets, the
movable cone is generated from ${\rm Nef}(X)$ by a group of pseudo-automorphisms.
This yields a rational polyhedral fundamental domain for the action of
${\rm Bir}(X)$ on $\overline{\rm Mov}^e(X)$ and therefore confirms the
Morrison--Kawamata cone conjecture in our setting.

\begin{lemma}\label{lem:closed-union-cones}
Let $\Pi\subset \mathbb R^n$ be a closed pointed polyhedral cone, and let
$G:=\langle \alpha,\beta\rangle \subset \mathrm{GL}_n(\mathbb R)$
be generated by two involutions. Assume 
that there exists a nonzero vector $e\in \Pi$ such that
$\alpha(e)=\beta(e)=e$ and that $h:=\alpha\beta$ is unipotent
with eigenspace $\mathbb Re$. Then
\[
U:=G\cdot \Pi=\bigcup_{g\in G} g(\Pi),
\]
is closed with $\partial U\subseteq G\cdot \partial \Pi$.
\end{lemma}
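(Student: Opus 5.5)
The plan is to show that the family of cones $\{g(\Pi)\}_{g\in G}$ is locally finite away from the ray $\mathbb R_{\ge0}e$, and to handle the neighbourhood of that ray separately. First we pin down the group. Since $\alpha,\beta$ are involutions, $G$ is a quotient of the infinite dihedral group, and each element is either $h^k$ or $h^k\alpha$ for some $k\in\mathbb Z$. As $h$ is unipotent and not the identity (it has a one-dimensional eigenspace, and $n\ge 2$ whenever the statement is nontrivial), $h$ has infinite order, so $G$ is infinite dihedral. Hence
\[
U=\bigcup_{k\in\mathbb Z}h^k(\Pi\cup\alpha(\Pi)).
\]
Set $\Pi':=\Pi\cup\alpha(\Pi)$, which is a finite union of closed polyhedral cones. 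It therefore suffices to treat the orbit of $\Pi'$ under the cyclic group $\langle h\rangle$.

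Next comes the asymptotics of $h^k$. Write $h=1+N$ with $N$ nilpotent and $\ker N=\mathbb R e$. Then $h^k=\sum_j\binom{k}{j}N^j$. If $N^{m}\neq0=N^{m+1}$, the image of $N^m$ lies in $\ker N=\mathbb R e$. So for every $v$ with $N^m v\neq 0$, the normalized vectors $h^{\pm k}v/\|h^{\pm k}v\|$ converge to $\pm e$ (the sign depends on $v$ and the parity of $m$). More generally, the top nonvanishing term always lands in $\ker N$, so normalized iterates of any $v\notin\mathbb Re$ converge to $\pm e$. Because the cones $h^k(\Pi')$ are contained in the pointed region obtained from $\Pi'\ni e$, I would argue that the limit is $+e$ on $\Pi'$. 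This uses that $e\in\Pi$ and the $h^k(\Pi')$ overlap $\Pi'$ along $e$. On a compact slice, for instance the unit sphere intersected with the cones, this convergence is uniform on compact sets of directions bounded away from $\mathbb R e$. It follows that any compact set $K$ on the sphere not containing $\pm e$ meets only finitely many $h^k(\Pi')$.

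The closedness argument then runs as follows. Take $x_i\in U$ with $x_i\to x$. If $x\notin\mathbb R_{\ge0}e$, choose a small compact neighbourhood of the direction of $x$. Only finitely many translates meet it, so $x$ lies in a finite union of closed sets, hence in $U$. If $x\in\mathbb R_{\ge0}e$, then $x\in\Pi\subseteq U$ directly. For the boundary statement, suppose $x\in\partial U$ with $x\notin\mathbb R e$. Near $x$, $U$ coincides with a finite union $\bigcup g_i(\Pi)$, and a boundary point of a finite union of closed sets is a boundary point of one of them, so $x\in g_i(\partial\Pi)$. If $x\in\mathbb R_{\ge0}e$, I would show that $e\in\partial\Pi$. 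Otherwise $e$ would be interior to $\Pi$, and every direction near $e$ would lie in $U$, so the boundary point lies in $\partial\Pi\subseteq G\cdot\partial\Pi$. If $e$ is interior to $\Pi$, then $x$ is not a boundary point at all, and the claim holds in either case.

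The main obstacle is the uniform escape estimate: showing that only finitely many $h^k(\Pi')$ meet a compact set of directions avoiding $e$, in a form uniform over the whole cone $\Pi'$ and not just pointwise. A subtle point is vectors $v$ with $N^m v=0$ but lower powers nonzero. The directions of their iterates still tend to $\ker N$, but at a slower polynomial rate. I would handle this by writing $h^k v$ as a polynomial in $k$ with vector coefficients and noting that the leading coefficient always lies in $\ker N$. Since $\Pi'$ is pointed and spanned by finitely many rays, it then suffices to bound the angle for the finitely many generators and use convexity. Ruling out the limit $-e$ (pointedness together with $e\in\Pi$) also needs care.
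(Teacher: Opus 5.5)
Your route is essentially the paper's: write $G=\{h^k,\,h^k\alpha\}$, use $h=1+N$ with $\ker N=\mathbb R e$ to show that $[e]$ is the only accumulation direction of the translates, then get closedness and $\partial U\subseteq G\cdot\partial\Pi$ from local finiteness. The argument breaks at exactly the step you flag, namely that the normalized iterates tend to $+e$ uniformly on $\Pi'$. Pointedness and $e\in\Pi$ do not give this, and in fact the lemma is false as stated.

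Define $\lambda$ by $N^{n-1}v=\lambda(v)e$. Then $h^kv\sim\binom{k}{n-1}\lambda(v)e$, so the limit direction is $\operatorname{sign}\bigl(\binom{k}{n-1}\lambda(v)\bigr)e$. This depends on the sign of $\lambda$ on $\Pi$, and for $n$ even it flips between $k\to+\infty$ and $k\to-\infty$. A pointed cone $h^k(\Pi)$ cannot contain $-e$, but it can contain $e$ together with vectors arbitrarily close to $-e$, and then it sweeps out almost a half-space.

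Here is a concrete counterexample. Take $n=2$, $\alpha=\operatorname{diag}(1,-1)$ and $\beta=\bigl(\begin{smallmatrix}1&1\\0&-1\end{smallmatrix}\bigr)$. Then $h=\alpha\beta=\bigl(\begin{smallmatrix}1&1\\0&1\end{smallmatrix}\bigr)$ and $e=(1,0)$. Let $\Pi=\operatorname{Cone}\bigl((1,0),(1,1)\bigr)$, so $e$ is even an extremal ray, as in the application. Then $h^k(\Pi)=\operatorname{Cone}\bigl((1,0),(1+k,1)\bigr)$ and $\alpha h^k(\Pi)=\operatorname{Cone}\bigl((1,0),(1+k,-1)\bigr)$. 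Letting $k\to-\infty$, these cones cover every $(a,b)$ with $b\neq 0$. On the other hand, $(-1,0)$ lies in none of them: it is fixed by $\alpha$ and lies in no $h^k(\Pi)$. So $U=\{b\neq0\}\cup\mathbb R_{\ge0}e$ is not closed. Moreover $(0,1)\in h^k(\Pi)$ for every $k\le -1$, so your local finiteness claim fails too.

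The paper's own proof has the same gap. It infers from the generators alone that $[e]$ is the only projective accumulation point of $\{h^r(\Pi)\}$, and then passes from $[x]=[e]$ to ``$x$ is a positive multiple of $e$''. You therefore cannot close the gap by following the paper; you need an extra hypothesis.

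A sufficient one: every extremal generator $u\notin\mathbb R e$ of $\Pi$ satisfies $h^ku/\|h^ku\|\to+e$ both as $k\to+\infty$ and as $k\to-\infty$. For instance, $n$ odd and $\lambda>0$ on $\Pi\setminus\mathbb R_{\ge0}e$ suffices. Since $h^k\alpha=\alpha h^{-k}$ and $\alpha$ fixes $e$, imposing this on $\Pi$ alone is enough.

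Under this hypothesis your plan works as written. Round cones $\{v:\langle v,e\rangle\ge\cos\theta\,\|v\|\,\|e\|\}$ are convex, so controlling the finitely many generators puts every $h^k(\Pi)$ and every $\alpha h^k(\Pi)$ inside such a cone with $\theta_k\to 0$. This gives local finiteness away from $\mathbb R_{\ge0}e$, hence closedness. The boundary inclusion then needs only closedness: a point of $\partial U$ lies in some $g(\Pi)$, and cannot be interior to it.

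For the use in Theorem~\ref{thm:cone} the corrected version suffices. There $N^1(X)$ is three-dimensional, and one checks that $\lambda$ is positive on the two generators of $\operatorname{Nef}(X)$ other than the common fixed class.
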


\begin{proof}
Since $\alpha$ and $\beta$ are involutions, the subgroup generated by $h=\alpha\beta$ has index at most $2$ in $G$, and every element of $G$ is either of the form $h^r$ or of the form $\alpha h^r$, with $r\in\mathbb Z$. Thus $U$ is the union of the cones $h^r(\Pi)$ and $\alpha h^r(\Pi)$.

We first prove that $U$ is closed. Let $(x_m)$ be a sequence in $U$ converging to some point $x\in \mathbb R^n$. If $x=0$, then $x\in U$, so there is nothing to prove. Assume $x\neq 0$.

After passing to a subsequence, we may assume that for each $m$ the point $x_m$ belongs to a cone $g_m(\Pi)$, where either the sequence $(g_m)$ is eventually constant or the cones $g_m(\Pi)$ are pairwise distinct.

If $(g_m)$ is eventually constant, say $g_m=g$ for all sufficiently large $m$, then $x_m\in g(\Pi)$ for all sufficiently large $m$. Since $g(\Pi)$ is closed, it follows that $x\in g(\Pi)\subset U$.

Assume now that the cones $g_m(\Pi)$ are pairwise distinct. Since $h$ is unipotent, we may write $h=I+N$ with $N$ nilpotent. The assumption that the $1$-eigenspace of $h$ is exactly $\mathbb Re$ means that $\ker N=\mathbb Re$. Let $v\in \mathbb R^n$ be a nonzero vector not proportional to $e$. Choose the largest integer $k\ge 0$ such that $N^k v\neq 0$. Then $N^{k+1}v=0$, and the binomial formula gives
\[
h^r(v)=\sum_{j=0}^k \binom{r}{j}N^jv .
\]
The leading term is $\binom{r}{k}N^k v$, and since $N^{k+1}v=0$, one has $N^k v\in \ker N=\mathbb Re$. Therefore the projective class $[h^r(v)]$ converges to $[e]$ as $|r|\to \infty$.

Since $\Pi$ is polyhedral, it has only finitely many extremal rays. Applying the previous observation to a set of generators of these rays, we see that the only projective accumulation point of the family $\{h^r(\Pi)\}_{r\in\mathbb Z}$ is $[e]$. The same holds for the family $\{\alpha h^r(\Pi)\}_{r\in\mathbb Z}$ because $\alpha(e)=e$. Hence $[e]$ is the only possible projective accumulation point of the family $\{g(\Pi)\}_{g\in G}$.

Now the projective classes $[x_m]$ converge to $[x]$, so necessarily $[x]=[e]$. Thus $x$ is a positive multiple of $e$. Since both generators fix $e$, every element of $G$ fixes $e$, and therefore the ray $\mathbb R_{\ge 0}e$ is contained in $U$. It follows that $x\in U$.
We have shown that $U$ is closed.

Finally, let $x\in \partial U$. Since $U$ is closed, we have $x\in U$, so there exists $g\in G$ such that $x\in g(\Pi)$. If $x\notin g(\partial \Pi)$, then $x$ lies in the interior of $g(\Pi)$. Hence some open neighborhood of $x$ is contained in $g(\Pi)$, and therefore in $U$, contradicting the fact that $x\in \partial U$. Thus $x\in g(\partial \Pi)$, and so $x\in G\cdot \partial \Pi$. This proves that $\partial U\subseteq G\cdot \partial \Pi$.
\end{proof}

\begin{proposition}\label{prop:cone-conjecture-from-two-involutions}
Let $\mathbb P_\Delta$ be a smooth Fano toric variety, and let $X\subset \mathbb P_\Delta$ be a Calabi--Yau hypersurface. Assume that there exist two facets $F_1,F_2$ of $\operatorname{Nef}(X)$ and two 
birational maps $\phi_1,\phi_2\in \operatorname{Bir}(X)$ such that:

\begin{enumerate}
\item the induced linear actions $\phi_1^*,\phi_2^*\in \operatorname{GL}(N^1(X)_{\mathbb R})$ are involutions;

\item there exists a nonzero class $e\in \operatorname{Nef}(X)$ such that $\phi_1^*(e)=\phi_2^*(e)=e$, and the element $h:=\phi_1^*\phi_2^*$ is unipotent with eigenspace $\mathbb R e$;

\item the facets $F_1$ and $F_2$ are the reflecting walls corresponding to $\phi_1^*$ and $\phi_2^*$, respectively;

\item every facet of $\operatorname{Nef}(X)$ different from $F_1$ and $F_2$ is contained in $\partial \operatorname{Mov}(X)$.
\end{enumerate}

Let $G:=\langle \phi_1^*,\phi_2^*\rangle \subset \operatorname{GL}(N^1(X)_{\mathbb R})$. Then
\[
\overline{\operatorname{Mov}}(X)=\overline{\operatorname{Mov}}^e(X)=\operatorname{Mov}(X)=G\cdot \operatorname{Nef}(X).
\]
Moreover, the image of $G$ has finite index in the image of $\operatorname{Bir}(X)$ in $\operatorname{GL}(N^1(X)_{\mathbb R})$. In particular, the cone conjecture holds for $X$.
\end{proposition}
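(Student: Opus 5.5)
We show that $U:=G\cdot\operatorname{Nef}(X)$ is a closed convex cone equal to $\overline{\operatorname{Mov}}(X)$. Then we sandwich the intermediate cones and deduce the finite index statement.

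\emph{Step 1: $U\subseteq \operatorname{Mov}(X)$ and $U$ is closed.} Since $X$ is Calabi--Yau, every element of $\operatorname{Bir}(X)$ is an isomorphism in codimension one. Hence $\phi^*$ maps $\operatorname{Nef}(X)$ onto the nef cone of a minimal model, which is contained in $\operatorname{Mov}(X)$ (in fact in $\operatorname{Mov}(X)\cap\operatorname{Eff}(X)$, since nef classes of minimal models of a Calabi--Yau with $\operatorname{Nef}(X)=i^*\operatorname{Nef}(\mathbb P_\Delta)$ rational polyhedral are semiample). Thus $U\subseteq \operatorname{Mov}(X)\subseteq\overline{\operatorname{Mov}}^e(X)\subseteq\overline{\operatorname{Mov}}(X)$. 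Lemma~\ref{lem:closed-union-cones}, applied with $\Pi=\operatorname{Nef}(X)$ (pointed, rational polyhedral) and $\alpha=\phi_1^*$, $\beta=\phi_2^*$, shows that $U$ is closed and that $\partial U\subseteq G\cdot\partial\operatorname{Nef}(X)$.

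\emph{Step 2: $\partial U\subseteq\partial\overline{\operatorname{Mov}}(X)$.} Take $x\in\partial U$, so $x\in g(F)$ for some $g\in G$ and some facet $F$ of $\operatorname{Nef}(X)$.

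If $F\neq F_1,F_2$, then by hypothesis (4) $F\subseteq\partial\operatorname{Mov}(X)$. Since $g$ preserves $\operatorname{Mov}(X)$, also $g(F)\subseteq\partial\operatorname{Mov}(X)$.

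If $F=F_1$ (the case $F_2$ is symmetric), then by (3) the cone $\phi_1^*\operatorname{Nef}(X)$ lies on the other side of $F_1$, so $\operatorname{Nef}(X)\cup\phi_1^*\operatorname{Nef}(X)$ contains a neighbourhood of every relative interior point of $F_1$. Translating by $g$, such points are interior points of $U$. Boundary points of $U$ on $g(F_1)$ must therefore lie on lower-dimensional faces. These faces are contained in other facets, which are either of the first kind or again reflecting walls; iterating, and using that the faces through $e$ are fixed by $G$, we land in $G\cdot(\text{facets}\neq F_1,F_2)\subseteq\partial\operatorname{Mov}(X)$, or in $\mathbb R_{\ge0}e$.

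We need a separate argument that $e\in\partial\overline{\operatorname{Mov}}(X)$. Suppose $e$ were interior. Then $h$ would preserve a neighbourhood cone of $e$ inside the closed cone $\overline{\operatorname{Mov}}(X)$. Unipotence of $h$ forces points near $e$ but off $\mathbb Re$ to have orbits $h^r(v)$ with leading term $\binom{r}{k}N^kv$, and $N^kv$ is a multiple of $e$ with sign changing between $r\to\pm\infty$ when $k$ is odd. For $k$ even, we apply the same argument to $N v$ in place of $v$. In either case some $h^r(v)$ leaves the pointed cone $\overline{\operatorname{Mov}}(X)$, a contradiction. Hence $e\in\partial\overline{\operatorname{Mov}}(X)$.

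\emph{Step 3: equality.} Now $U\subseteq\overline{\operatorname{Mov}}(X)$ are closed convex-type cones with $\partial U\subseteq\partial\overline{\operatorname{Mov}}(X)$, and $U$ has nonempty interior because it contains the ample cone. A connectedness argument applies: $\operatorname{int}\overline{\operatorname{Mov}}(X)$ is connected and equals $(\operatorname{int}\overline{\operatorname{Mov}}\cap \operatorname{int} U)\sqcup(\operatorname{int}\overline{\operatorname{Mov}}\setminus U)$, both open because $\partial U$ does not meet $\operatorname{int}\overline{\operatorname{Mov}}$. Hence $U\supseteq\operatorname{int}\overline{\operatorname{Mov}}(X)$, and by closedness $U=\overline{\operatorname{Mov}}(X)$. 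The sandwich from Step 1 then gives all four equalities.

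\emph{Step 4: finite index.} The stabilizer in $\operatorname{Bir}(X)$ of $\operatorname{Nef}(X)$ consists of automorphisms up to the kernel of the action. Its image is finite because it permutes the finitely many rays of a rational polyhedral cone containing an ample class (a standard argument: it preserves an integral ample class, so the action factors through a finite group). The tiles $g\operatorname{Nef}(X)$ are the nef cones of minimal models and have disjoint interiors. Any $\psi\in\operatorname{Bir}(X)$ sends $\operatorname{Nef}(X)$ to some chamber, which by Step 3 is a tile $g\operatorname{Nef}(X)$. Then $g^{-1}\psi^*$ stabilizes $\operatorname{Nef}(X)$, so $G$ has finite index. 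Finally, $\Pi=\operatorname{Nef}(X)\cap\operatorname{Nef}(X)$ cut by one reflecting half-space (or simply $\operatorname{Nef}(X)$ modulo the finite stabilizer) gives a rational polyhedral fundamental domain.

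The main obstacle is Step 2, specifically controlling the faces of $g(F_1)$, $g(F_2)$ and the ray $e$. This is where unipotence is used in an essential way, and where care is needed to ensure the reflecting walls are genuinely interior.
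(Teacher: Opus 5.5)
Your proposal is correct and follows essentially the same route as the paper. Lemma~\ref{lem:closed-union-cones} gives $U=G\cdot\operatorname{Nef}(X)$ closed with $\partial U\subseteq G\cdot\partial\operatorname{Nef}(X)$. Translates of $F_1,F_2$ are interior walls, while translates of the other facets lie in $\partial\operatorname{Mov}(X)$, so $U=\operatorname{Mov}(X)$. Finite index then follows from finiteness of the stabilizer of $\operatorname{Nef}(X)$.

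Your connectedness argument in Step~3 and your parity/pointedness argument placing $\mathbb R_{\ge 0}e$ on $\partial\overline{\operatorname{Mov}}(X)$ make explicit points the paper leaves implicit. The parity argument also shows why translates of $\operatorname{Nef}(X)$ can only accumulate at $+e$ rather than $-e$, which the lemma's closedness claim tacitly uses. One small fix: for even $k$, perturb $e$ by $\epsilon Nv$ rather than replacing $v$ by $Nv$.
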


\begin{proof}
Since $X$ is a Calabi--Yau hypersurface in a smooth Fano toric variety, one has $\operatorname{Nef}(X)=\operatorname{Nef}(\mathbb P_\Delta)$. Therefore Lemma~\ref{lem:closed-union-cones} applies to the cone $\Pi=\operatorname{Nef}(X)$ and to the involutions $\alpha=\phi_1^*$ and $\beta=\phi_2^*$.

Set $U:=G\cdot \operatorname{Nef}(X)$. By the lemma, $U$ is closed and its boundary is contained in $G\cdot \partial \operatorname{Nef}(X)$.

Since $\phi_1$ and $\phi_2$ are pseudo-automorphisms of $X$, every element of $G$ is induced by a pseudo-automorphism of $X$. Hence each cone $g(\operatorname{Nef}(X))$, with $g\in G$, is contained in $\operatorname{Mov}(X)$. It follows that $U\subseteq \operatorname{Mov}(X)$.

We claim that every boundary wall of $U$ is contained in $\partial \operatorname{Mov}(X)$. Indeed, every point of $\partial U$ lies on a translate of a facet of $\operatorname{Nef}(X)$. The two distinguished facets $F_1$ and $F_2$ do not contribute to the outer boundary of $U$, because they are interior walls of the chamber decomposition. More precisely, the cone adjacent to $\operatorname{Nef}(X)$ across $F_j$ is $\phi_j^*(\operatorname{Nef}(X))$, which is again contained in $U$. The same argument applies to every translate of these two facets: if a wall is of the form $g(F_j)$, then the two cones adjacent along it are $g(\operatorname{Nef}(X))$ and $g\phi_j^*(\operatorname{Nef}(X))$, both contained in $U$. Therefore no translate of $F_1$ or $F_2$ can occur in the outer boundary of $U$.

On the other hand, by assumption, every facet of $\operatorname{Nef}(X)$ different from $F_1$ and $F_2$ is contained in $\partial \operatorname{Mov}(X)$. Since pseudo-automorphisms preserve the movable cone, every translate under $G$ of such a facet is again contained in $\partial \operatorname{Mov}(X)$. Thus every wall which may contribute to $\partial U$ is contained in $\partial \operatorname{Mov}(X)$, and therefore $\partial U\subseteq \partial \operatorname{Mov}(X)$.

Now $U$ is a closed subset of $\operatorname{Mov}(X)$ whose boundary is contained in the boundary of $\operatorname{Mov}(X)$. Hence $U=\operatorname{Mov}(X)$. Since $U$ is closed, we also obtain $\overline{\operatorname{Mov}}(X)=\overline{\operatorname{Mov}}^e(X)=\operatorname{Mov}(X)$.

Finally, let $\Gamma$ be the image of $\operatorname{Bir}(X)$ in $\operatorname{GL}(N^1(X)_{\mathbb R})$. Every element of $\Gamma$ sends $\operatorname{Nef}(X)$ to a chamber of $\operatorname{Mov}(X)$, and by what we have proved every such chamber is of the form $g(\operatorname{Nef}(X))$ for some $g\in G$. Therefore every element of $\Gamma$ differs from an element of $G$ by an element stabilizing $\operatorname{Nef}(X)$.

The stabilizer of $\operatorname{Nef}(X)$ in $\Gamma$ is finite, because $\operatorname{Nef}(X)$ is a full-dimensional rational polyhedral cone and hence has only finitely many extremal rays, which any element of the stabilizer must permute. It follows that the image of $G$ has finite index in $\Gamma$.

Therefore the action of $\operatorname{Bir}(X)$ on $\operatorname{Mov}(X)$ admits a rational polyhedral fundamental domain, given by a finite union of translates of $\operatorname{Nef}(X)$. This proves the cone conjecture for $X$.
\end{proof}

\subsection{Facets of the nef cone and the movable cone}
In this subsection we compare the facets of the nef cone of the ambient toric
Fano variety with the boundary of the movable cone of a general anticanonical
hypersurface. The main point is to determine when a facet of
\({\rm Nef}(Z)={\rm Nef}(X)\) remains a facet of \({\rm Mov}(X)\), and when it
instead becomes an interior wall of the movable cone. We do this by analyzing
the extremal contraction associated with the dual extremal ray of the Mori
cone. The outcome is a criterion in terms of the type of contraction:
fiber type, divisorial, or small. We then translate this criterion into the
combinatorics of primitive relations of the fan, which allows us to apply it
directly to the toric Fano varieties appearing in Theorem~\ref{thm:cone}.

\begin{lemma}\label{lem:van}
Let $X$ be a smooth Fano variety, and let $D$ be a nef divisor on $X$. Assume that $D$ defines a contraction of fiber type
$\varphi\colon X \to Y$,
and that the fibers of $\varphi$ have dimension at least $2$. Then
\[
H^1(X,K_X+D)=0.
\]
\end{lemma}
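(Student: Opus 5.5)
The plan is to use Kollár's theorems on higher direct images of dualizing sheaves together with the Leray spectral sequence. This reduces the vanishing of $H^1(X,K_X+D)$ to the vanishing of the sheaves $\varphi_*\omega_X$ and $R^1\varphi_*\omega_X$ on $Y$. Since $D$ is nef and defines $\varphi$, we may write $D=\varphi^*A$ with $A$ ample on the normal projective variety $Y$. The projection formula then gives
\[
H^1(X,K_X+D)=H^1\bigl(X,\omega_X\otimes\varphi^*\mathcal O_Y(A)\bigr).
\]

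The first step is to invoke Kollár's decomposition and vanishing theorems for $\varphi\colon X\to Y$ with $X$ smooth and projective:
\begin{enumerate}
\item each $R^i\varphi_*\omega_X$ is torsion free;
\item $H^j\bigl(Y,R^i\varphi_*\omega_X\otimes\mathcal O_Y(A)\bigr)=0$ for $j>0$ and $A$ ample;
\item the Leray spectral sequence degenerates, so that $H^1(X,\omega_X\otimes\varphi^*A)\cong \bigoplus_{i+j=1}H^j(Y,R^i\varphi_*\omega_X\otimes A)$.
\end{enumerate}
Combining (ii) and (iii), only the term $H^0(Y,R^1\varphi_*\omega_X\otimes A)$ survives; the term $H^1(Y,\varphi_*\omega_X\otimes A)$ already vanishes by (ii). It therefore suffices to show that $R^1\varphi_*\omega_X=0$. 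For the analogous statement about $\varphi_*\omega_X$ we do not even need (ii), although it also vanishes.

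The second step is to compute generic ranks. Because $R^1\varphi_*\omega_X$ is torsion free, it vanishes as soon as its rank at the generic point of $Y$ is zero. By generic smoothness and cohomology and base change over a dense open subset of $Y$, that rank equals $h^1(F,\omega_F)$ for a general fiber $F$. Adjunction gives $\omega_F=\omega_X|_F$, since the normal bundle of $F$ is trivial. The general fiber $F$ is a smooth Fano variety, because $-K_F=-K_X|_F$ is ample. Set $d:=\dim F\ge 2$. Serre duality gives $h^1(F,\omega_F)=h^{d-1}(F,\mathcal O_F)$, and Kodaira vanishing applied to $\mathcal O_F=K_F+(-K_F)$ gives $h^{d-1}(F,\mathcal O_F)=0$ because $d-1\ge 1$. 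Similarly $h^0(F,\omega_F)=h^d(F,\mathcal O_F)=0$, so $\varphi_*\omega_X=0$ as well.

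This is exactly where the hypothesis on the fiber dimension enters. If $d=1$, then $h^{d-1}(F,\mathcal O_F)=h^0(F,\mathcal O_F)=1$, and $R^1\varphi_*\omega_X$ would be a line bundle, in fact $\omega_Y$ in the smooth conic-bundle case. So the argument, and indeed the statement, genuinely needs $d\ge 2$. The only delicate point I anticipate is applying Kollár's results when $Y$ is singular. This is fine: the vanishing $H^j(Y,R^i\varphi_*\omega_X\otimes A)=0$ for $j>0$ and the degeneration of the Leray spectral sequence are stated for an arbitrary surjective morphism from a smooth projective variety onto a projective variety with $A$ ample. Alternatively, one can first kill $\varphi_*\omega_X$ and $R^1\varphi_*\omega_X$ by the torsion-freeness argument, and then observe that the low-degree exact sequence of the Leray spectral sequence already gives $H^1(X,\omega_X\otimes\varphi^*A)\hookrightarrow H^0(Y,R^1\varphi_*\omega_X\otimes A)$ once $H^1(Y,\varphi_*\omega_X\otimes A)=0$. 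With both sheaves zero, this forces $H^1(X,K_X+D)=0$, so the vanishing of these sheaves suffices on its own.
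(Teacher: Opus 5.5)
Your proposal is correct and follows essentially the same route as the paper: write $D\sim\varphi^*A$, use the low-degree Leray exact sequence, and kill both $\varphi_*\omega_X$ and $R^1\varphi_*\omega_X$ by combining Kollár's torsion-freeness with Serre duality and Kodaira vanishing on the smooth Fano general fiber of dimension $d\ge 2$. Your additional appeal to Kollár's vanishing and decomposition theorems is valid but unnecessary. Your closing alternative, in which both sheaves vanish outright, is exactly the paper's argument.
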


\begin{proof}
Since $D$ defines the contraction $f$, it is pulled back from the base. In other words, there exists an ample divisor $A$ on $Y$ such that $D \sim \varphi^*A$. Thus the cohomology group we want to study is
$H^1(X,K_X+\varphi^*A)$.
By the projection formula 
\[
R^i \varphi_*\mathcal O_X(K_X+\varphi^*A) \cong R^i \varphi_*\mathcal O_X(K_X) \otimes \mathcal O_Y(A).
\]
The exact sequence of low-degree terms of the Leray spectral sequence for the morphism $f$ thus yields:
\[
0 \to H^1(Y, \varphi_*\mathcal O_X(K_X) \otimes \mathcal O_Y(A)) \to H^1(X, K_X+\varphi^*A) \to H^0(Y, R^1 \varphi_*\mathcal O_X(K_X) \otimes \mathcal O_Y(A)).
\]
Therefore, to show that $H^1(X, K_X+\varphi^*A)=0$, it is sufficient to prove that $\varphi_*\mathcal O_X(K_X)=0$ and $R^1 \varphi_*\mathcal O_X(K_X)=0$.

By Kollár's torsion-free theorem~\cite{ko}, the higher direct images $R^i \varphi_* \mathcal{O}_X(K_X)$ are torsion-free sheaves. Let $F$ be a general fiber of $\varphi$ of dimension $d \ge 2$. Because $X$ is smooth and Fano, and $\varphi$ is a fiber type extremal contraction, the general fiber $F$ is again a smooth Fano variety. 

By Serre duality on the smooth variety $F$, we have $H^1(F, K_F) \cong H^{d-1}(F, \mathcal O_F)^\vee$. Since $F$ is Fano, the Kodaira vanishing theorem implies $H^j(F, \mathcal O_F)=0$ for all $j>0$. Because $d \ge 2$, we get $H^1(F, K_F)=0$. Thus, the torsion-free sheaf $R^1 f_*\mathcal O_X(K_X)$ has generic rank zero, which means $R^1 \varphi_*\mathcal O_X(K_X) = 0$.
Since $F$ is Fano $H^0(F, K_F)=0$. This implies that the generic fiber of the torsion-free sheaf $\varphi_*\mathcal O_X(K_X)$ is zero, so $\varphi_*\mathcal O_X(K_X) = 0$. 
\end{proof}

\begin{proposition}\label{prop:facet-nef-to-mov}
Let $Z$ be a smooth toric Fano variety of dimension $n\geq 4$, and let
$X\in |-K_Z|$ be a smooth general member. Let
\[
\rho \colon N^1(Z)_\mathbb R \longrightarrow N^1(X)_\mathbb R
\]
be the restriction isomorphism, and let $F$ be a facet of ${\rm Nef}(Z)={\rm Nef}(X)$.
Choose a class $[D]\in \operatorname{relint}(F)$, and let
\[
\varphi:=\varphi_D \colon Z \longrightarrow Y
\]
be the contraction defined by $D$. Then $\rho(F)$ is not a facet of ${\rm Mov}(X)$ if and only if
$\varphi$ is one of the following
\begin{enumerate}
\item a contraction of fiber type with one-dimensional fibers,
\item a divisorial contraction whose exceptional divisor has one-dimensional fibers,
\item a small contraction.
\end{enumerate}
\end{proposition}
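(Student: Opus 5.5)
We will compare the contraction $\varphi\colon Z\to Y$ with its restriction $\psi:=\varphi|_X\colon X\to\psi(X)\subseteq Y$, and use the dichotomy for a facet of ${\rm Nef}(X)$: $\rho(F)$ lies in $\partial{\rm Mov}(X)$ exactly when $\rho(D)$ is not big or $\psi$ is divisorial. Otherwise $\psi$ is small, there is a flop across $\rho(F)$, and $\rho(F)$ is an interior wall of ${\rm Mov}(X)$. Since $X$ is Calabi--Yau and $\rho(D)$ is semiample, $\rho(F)$ is a facet of ${\rm Mov}(X)$ if and only if $\psi$ is either of fiber type or contracts a divisor. The proof therefore reduces to computing $\dim\operatorname{Exc}(\psi)$ and $\dim\psi(X)$ in each of the three cases of $\varphi$ (fiber type, divisorial, small), according to the fiber dimension $d$ of $\varphi$ over its exceptional or general locus.

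\emph{Fiber type.} If $d\geq 2$, the general fiber $F_y$ is a Fano variety of dimension $d$, and $X\cap F_y\in|-K_{F_y}|$ is a divisor in it, hence positive dimensional. So $\psi$ is again of fiber type and $\rho(F)$ stays on $\partial{\rm Mov}(X)$. To make this rigorous, in particular that $\psi$ is surjective onto $Y$ with connected fibers, we would use Lemma~\ref{lem:van}. The sequence $0\to\mathcal O_Z(K_Z+D)\to\mathcal O_Z(D)\to\mathcal O_X(D)\to0$ together with $H^1(Z,K_Z+D)=0$ shows $H^0(Z,D)\to H^0(X,D)$ is surjective. Hence $\psi$ is the Stein factorization of $\varphi|_X$ and has image $Y$, of dimension $<\dim X$. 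If $d=1$, the fibers are conics with $-K_Z\cdot C=2$, and $X$ meets a general one in two points. Thus $\psi\colon X\to Y$ is generically finite of degree $2$ and birational onto its Stein factorization. Its exceptional set consists of the fibers contained in $X$. For general $X$ this locus has codimension $\geq 2$ in $X$: containing a conic imposes $3$ conditions on a family of dimension $n-1$, so the locus has dimension $n-1-3+1=n-3$. Hence $\psi$ is small, and $\rho(D)$ is big.

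\emph{Birational $\varphi$.} Let $E$ be the exceptional locus and $d$ the dimension of the fibers of $E\to\varphi(E)$. When $\varphi$ is divisorial, $E\cap X$ is a divisor of $X$. If $d\geq2$, each fiber meets $X$ in a positive-dimensional set: $X$ is ample on the fiber and $-K_Z$ restricts with the right degree, using the toric description of fibers as projective spaces. Then $\psi$ contracts the divisor $E\cap X$, so the wall is on the boundary. If $d=1$, the fibers are lines with $-K_Z\cdot\ell=1$, so $X$ meets each in one point and $\psi$ is finite on $E\cap X$. The curves contracted by $\psi$ are only those lines contained in $X$, which by a dimension count form a codimension $\geq2$ locus; hence $\psi$ is small. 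When $\varphi$ is small, $E$ has codimension $\geq2$ in $Z$, so $E\cap X$ has codimension $\geq2$ in $X$. Therefore $\psi$ is small or an isomorphism. Since $\rho(F)$ is a facet of ${\rm Nef}(X)$ by \cite{bor}, $\psi$ is not an isomorphism, so it is a small contraction.

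The main obstacle is the dimension count showing that curves contained in a general $X$ sweep out codimension $\geq2$, in the cases where $\psi$ should be small. We would handle it toric-theoretically: the contracted curves are torus-invariant-type fibers parametrized by a base of known dimension, and containment in $X$ imposes $-K_Z\cdot C+1\geq 2$ independent conditions because $|-K_Z|$ is very ample on smooth toric Fanos. A secondary point is the nonemptiness of the flop, which is needed so that a small $\psi$ genuinely makes $\rho(F)$ interior. This follows from the existence of flops for Calabi--Yau pairs, via \cite{bchm}.
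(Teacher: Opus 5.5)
Your proposal is correct and follows essentially the same case analysis as the paper: fiber type versus divisorial versus small, split according to whether the relevant fibers are one-dimensional, with the same arguments in the higher-dimensional-fiber cases and in the small case. The only real differences are in the one-dimensional cases and in the small case. In the one-dimensional cases, the paper identifies the contraction with a primitive pair via \cite{ca} and runs a Cox-coordinate generality argument (codimension of $V(f_1,f_2,f_3)$, resp.\ the analogue for $f=x_1f_1+x_2f_2$); you replace this with an intrinsic incidence count based on very ampleness of $-K_Z$, which is the same count in different language. In the small case, you also justify the existence of the flop (and hence that the wall is interior), which the paper leaves implicit.
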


\begin{proof}
Since $[D]$ lies in the relative interior of the facet $F$, the morphism $\varphi$ contracts precisely the curves whose numerical class spans the extremal ray of ${\rm NE}(Z)$ dual to $F$.

We first establish when the morphism defined by $D|_X$ is the restriction of $\varphi$ to $X$. The exact sequence
\[
0 \longrightarrow \mathcal O_Z(K_Z+D)\longrightarrow \mathcal O_Z(D)\longrightarrow \mathcal O_X(D|_X)\longrightarrow 0
\]
shows that if $H^1(Z,K_Z+D)=0$, the restriction map $H^0(Z,D)\longrightarrow H^0(X,D|_X)$ is surjective.  By Kawamata--Viehweg vanishing theorem, this holds whenever $\varphi$ is birational, since $D$ is big. By Lemma~\ref{lem:van}, it also holds if $\varphi$ is of fiber type with fibers of dimension at least $2$. Thus, the restriction of $\varphi$ to $X$ is the morphism defined by $D_{|X}$ in all cases except possibly Case~3 below. In what follows we will denote $\varphi_{|X}$ by $\varphi_X$.

We now discuss the possible types of contractions.

\smallskip
\noindent
\emph{Case 1: $\varphi$ is of fiber type and the general fiber has dimension at least $2$.}
Let $G$ be a general fiber of $\varphi$. Since $X$ is a divisor, $\dim(G\cap X)=\dim G-1\geq 1$. Thus the general fiber of $\varphi_X$ has positive dimension, meaning $\varphi_X$ is still of fiber type. Consequently, $D|_X$ is semiample but not big, so its class lies on the boundary of ${\rm Mov}(X)$. Since $\rho(F)$ has codimension one, it is a facet of ${\rm Mov}(X)$.

\smallskip
\noindent
\emph{Case 2: $\varphi$ is divisorial and the general fibers of the exceptional divisor have dimension at least $2$.}
Let $G$ be a general fiber of ${\rm Exc}(\varphi)\to \varphi({\rm Exc}(\varphi))$. As above, $\dim(G\cap X)=\dim G-1\geq 1$. As $G$ varies, these positive-dimensional intersections sweep out a divisor in $X\cap {\rm Exc}(\varphi)$ which is contracted by $\varphi_X$. Therefore the class of $D|_X$ lies on the boundary of ${\rm Mov}(X)$, and $\rho(F)$ is a facet of ${\rm Mov}(X)$.

\smallskip
\noindent
\emph{Case 3: $\varphi$ is of fiber type with one-dimensional fibers.}
Since 
 $\varphi$ is extremal it defines  a $\mathbb P^1$-bundle onto a smooth toric Fano variety $Y$. 
By \cite[Cor.~2.4]{ca} this extremal contraction is associated to a primitive pair $v_1,v_2$ of degree two of $Z$: $v_1+v_2=0$. By Proposition~\ref{prop:facet-irr-correspondence}, the multiplicity of a general
polynomial $f$ of degree $-K_Z$ along $V(x_1,x_2)$ is~$2$, so the equation of an anticanonical hypersurface is of the form
\[
    f \;=\; f_1\,x_1^2 \;+\; f_2\,x_1 x_2 \;+\; f_3\,x_2^2.
\]
Let $\overline Z$ be the spectrum of the Cox ring of $Z$, and let $\widehat Z \subseteq \overline Z$ be its characteristic space. By the generality assumption, the zero locus
\(V(f_1,f_2,f_3)\subseteq \overline Z\)
either has codimension $3$, or else it contains an irreducible component of the form $V(x_i,x_j)$.
Since the degree of $f$ is ample, namely $-K_Z$, every component of the form $V(x_i,x_j)$ is contained in the irrelevant locus. In particular, such a component does not meet $\widehat Z$. It follows that
\(
V(f_1,f_2,f_3)\cap \widehat Z
\)
has codimension $3$ in $\widehat Z$.
Because $Z$ is smooth, the good quotient morphism
\(
p_Z\colon \widehat Z \to Z
\)
is geometric. Hence the image
\(
p_Z\bigl(V(f_1,f_2,f_3)\cap \widehat Z\bigr)
\)
has codimension $3$ in $Z$, and therefore codimension $2$ in $X$.
This image is precisely the locus over which the $\mathbb{P}^1$-bundle has fibers contained in $X$. Since this locus has codimension $2$ in $X$, it does not contain any divisor of $X$.

\smallskip
\noindent
\emph{Case 4: $\varphi$ is divisorial and the fibers of the exceptional divisor are one-dimensional.}
By \cite[Cor.~2.4]{ca} this extremal contraction $\varphi$ is associated to a primitive pair $v_1,v_2$ of degree one of $Z$: $v_1+v_2=v_3$.
The morphism $\varphi$ is the blow-up of a smooth codimension-$2$ subvariety, and its exceptional divisor $E_3$ is a $\mathbb P^1$-bundle over $\varphi(E_3)$.
We claim that the locus of fibers 
on $X\cap E_3$ has codimension at least $2$.
Indeed a defining equation for $X$ in Cox coordinates 
is $f = x_1f_1+x_2f_2$. The contraction $\varphi$ is given by $(x_1,x_2,x_3,x_4,\dots,x_r)\mapsto (x_1x_3,x_2x_3,x_4,\dots,x_r)$.
The fiber over a point on the image $V(x_1,x_2)$ of the exceptional locus 
is thus obtained by fixing the values of $x_4,\dots,x_r$ and taking $x_3=0$.
Such fiber is contained in $X$ exactly when $f_1$ and $f_2$ both vanish
for the given values of the coordinates. By the generality assumption 
on $X$ the only divisorial components of $X\cap E_3$ spanned by fibers
are of the form $E_i\cap E_3 = V(x_i,x_3)$. Reasoning as in Case 3 one
can exclude the presence of such divisors in $X$.

\smallskip
\noindent
\emph{Case 5: $\varphi$ is small.}
The exceptional locus ${\rm Exc}(\varphi)$ has codimension at least $2$ in $Z$. A general member $X$ does not contain any irreducible component of ${\rm Exc}(\varphi)$, so $X\cap {\rm Exc}(\varphi)$ has codimension at least $2$ in $X$. It follows that $\varphi_X$ does not contract any divisor, placing the class of $D|_X$ in the interior of ${\rm Mov}(X)$. Hence $\rho(F)$ is not a facet of ${\rm Mov}(X)$.

Combining the previous cases, we conclude that $\rho(F)$ is a facet of ${\rm Mov}(X)$ if and only if $\varphi$ is neither of fiber type with one-dimensional fibers, nor divisorial with one-dimensional fibers, nor small.
\end{proof}

\begin{remark}\label{rem:comb}
Let $\mathbb P_\Delta$ be a smooth toric Fano variety of dimension $n\geq 4$, and let
$X\in |-K_{\mathbb P_\Delta}|$ be a smooth general member. Let
\[
\rho \colon N^1(\mathbb P_\Delta)_\mathbb R \longrightarrow N^1(X)_\mathbb R
\]
be the restriction isomorphism, and let $F$ be a facet of ${\rm Nef}(\mathbb P_\Delta)={\rm Nef}(X)$.
The facet $F$ is orthogonal to an extremal ray $\mathbb R_{\geq 0}\cdot [C]$
of the Mori cone $NE(\mathbb P_\Delta)$. It is known that the irreducible invariant curve $C$
corresponds to a primitive relation
\[
    v_{i_1} + \dots + v_{i_k} \;=\; c_1 v_{j_1} + \dots + c_\ell v_{j_\ell},
\]
where $v_{i_1},\dots,v_{i_k},v_{j_1},\dots,w_{j_\ell}$ are vertices of $\Delta$,
and $c_i$ are positive integers. The primitive relation
is {\em extremal} if it is not positive linear combination 
of other primitive relations, or 
equivalently, if the invariant 
curve spans an extremal ray of the Mori cone.
The three cases of Proposition~\ref{prop:facet-nef-to-mov} are 
characterized in terms of primitive relations as follows.
\begin{enumerate}
    \item The first case corresponds to an extremal primitive pair $\{v_1,v_2\}$ of degree two.
\item The second case corresponds to a 
primitive pair
$\{v_1,v_2\}$ of degree one 
(which is always extremal by~\cite{bat}).
\item
The third case corresponds to an extremal primitive relation with $\ell \geq 2$.
\end{enumerate}
\end{remark}

\subsection{Proof of Theorem~\ref{thm:cone}}
\begin{proof}[Proof of Theorem~\ref{thm:cone}]
Let us describe the combinatorics of $\Delta$. 
Its vertices are (the condition $i\leq n-2$ implies that 
$v_{n}$ is not in the convex hull of $v_1,\dots,v_{n-2},v_{n+1}$):
\[
v_1=e_1,\dots,v_n=e_n,\quad
v_{n+1}=-e_1-\cdots-e_{n-2}+i e_n,\quad
v_{n+2}=-e_{n-1},\quad
v_{n+3}=-e_n.
\]
The primitive relations are therefore
\[
v_{n-1}+v_{n+2}=0,\quad
v_n+v_{n+3}=0,\quad
v_1+\cdots+v_{n-2}+v_{n+1}=i\,v_n.
\]

In particular, the Picard number of $\mathbb P_\Delta$ is equal to $3$, hence
${\rm Nef}(\mathbb P_\Delta)$ is a simplicial cone with exactly three facets.
Moreover, the first two primitive relations correspond to two primitive pairs of degree $2$,
so by Proposition~\ref{prop:facet-nef-to-mov} and Remark~\ref{rem:comb} the corresponding two facets of ${\rm Nef}(\mathbb P_\Delta)$
are not on the boundary of ${\rm Mov}(X)$ and yield the two birational involutions needed in
Proposition~\ref{prop:cone-conjecture-from-two-involutions}.
It remains to analyze the third facet. The corresponding primitive relation is
\[
v_1+\cdots+v_{n-2}+v_{n+1}=i\,v_n.
\]
This is not a primitive pair, so it does not fall under cases~(1) or~(2) of
Remark~\ref{rem:comb}. Furthermore, the right-hand side consists of a single ray,
so $\ell=1$; therefore it is not of the type described in case~(3) either.
Equivalently, by Proposition~\ref{prop:facet-nef-to-mov}, the associated contraction
is neither a fiber type contraction with one-dimensional fibers, nor a divisorial
contraction with one-dimensional fibers in the exceptional divisor, nor a small contraction.
Hence the third facet maps to a facet of ${\rm Mov}(X)$.

We have thus verified that ${\rm Nef}(\mathbb P_\Delta)={\rm Nef}(X)$ is simplicial of
Picard rank $3$, that two of its facets correspond to the required involutions, and that
the remaining facet lies on the boundary of ${\rm Mov}(X)$. Therefore all the
hypotheses of Proposition~\ref{prop:cone-conjecture-from-two-involutions} are satisfied,
and the cone conjecture follows for $X$.
\end{proof}
\begin{remark}\label{rem:cone}
When $i=0$, the toric Fano variety
$\mathbb P_\Delta$ of 
Theorem~\ref{thm:cone} is isomorphic
to $\mathbb P^1\times\mathbb P^1
\times\mathbb P^{n-2}$. In this case 
it was already known that the cone 
conjecture holds for a general
anticanonical hypersurface $X$
(see~\cite{wan}).

\end{remark}

\begin{remark}
Within the smooth toric Fano polytopes there are only $4$
with at least $7$ vertices whose extremal primitive collections have either $\ell = 0$ or 
$\ell = 1$ and $k\geq 3$.
These correspond to the indexes $[ 42, 140, 142, 143 ]$ with
the notation of the Graded Ring Database. The corresponding Fano varieties are
\[
   \mathbb P^1\times\mathbb P_{\mathbb P^2}(\mathcal O\oplus\mathcal O(2)),
   \qquad
   \mathbb P^1\times\mathbb P_{\mathbb P^2}(\mathcal O\oplus\mathcal O(1)),
   \qquad
   \mathbb P^1\times\mathbb P^1\times\mathbb P^1\times\mathbb P^1
   \qquad
   \mathbb P^1\times\mathbb P^1\times\mathbb P^2.
\]
\end{remark}

\section{K3 hypersurfaces}
\label{sec:7}

This section discusses the surface case, namely anticanonical \(K3\) hypersurfaces in smooth toric Fano threefolds. 
In this dimension the Mori dream property can be characterized in lattice-theoretic terms: a \(K3\) surface is a Mori dream space precisely when its automorphism group is finite, and for very general toric anticanonical \(K3\) surfaces this depends only on the Picard lattice. 
We use the classification of Picard lattices for these surfaces, together with Theorem~\ref{cor:1}, to determine which examples are Mori dream spaces. 
The resulting list is presented in a table containing the toric Fano threefold, the Picard lattice of a very general anticanonical member, and the corresponding Mori dream status. 
We then examine two representative examples in detail: first, a Mori dream \(K3\) surface whose Cox ring is obtained from Theorem~\ref{thm:1}; second, a non-Mori dream example with infinite automorphism group, where the automorphisms are described explicitly through involutions acting on the Picard lattice.

\subsection{The low-dimensional picture}

The case of Calabi--Yau hypersurfaces of dimension $2$, i.e. $K3$ surfaces,
is particularly convenient as a testing ground: a $K3$ surface is a Mori dream space exactly when its automorphism group is finite and this property is known to depend only on the Picard lattice of the surface  (see \cite{AHL} and references there).
The Picard lattices of anticanonical $K3$ surfaces of smooth toric Fano threefolds have been computed in \cite{Mase}.

Let $\Delta\subset N_{\mathbb{Q}}$ be a $3$-dimensional smooth Fano lattice polytope, and let
$\mathbb{P}_{\Delta}$ denote the associated smooth projective toric Fano threefold.
A very general anticanonical member
$
X\in \bigl|-K_{\mathbb{P}_{\Delta}}\bigr|
$
is a $K3$ surface.
In our setting, the question of
whether $X$ is a Mori dream surface is settled by Theorem~\ref{cor:1}: the entry
``MDS'' in the table is obtained as a direct consequence of that result. 
Moreover, Theorem \ref{thm:1} provides a presentation for the Cox rings of all Mori dream $K3$ surfaces of this type. 

The table below lists all smooth toric Fano threefolds arising from the graded ring database
of smooth toric Fano polytopes. The first column is the database index $i$ of the polytope
$\Delta$. The second column contains a matrix whose columns are the primitive generators of
the rays of the fan of $\mathbb{P}_{\Delta}$. The third column gives a standard name for
$\mathbb{P}_{\Delta}$, following the notation of \cite{WW}. The fourth column records the
Picard lattice ${\rm Cl}(X)$ of a very general surface
$X\in\bigl|-K_{\mathbb{P}_{\Delta}}\bigr|$. Finally, the last column indicates whether $X$
is a Mori dream surface.

{\footnotesize
\begin{longtable}{%
    c                                      
    l                                      
    l                                      
    >{\centering\arraybackslash\scriptsize}c 
    c}                                     
    \toprule
    \textbf{$i$} & \textbf{Vertices of $\Delta$} & \textbf{$\mathbb P_\Delta$} &
    \textbf{Picard lattice of $X$} & \textbf{MDS} \\
    \midrule
    \endfirsthead
    \toprule
    \textbf{$i$} & \textbf{Vertices of $\Delta$} & \textbf{$\mathbb P_\Delta$} &
    \textbf{Picard lattice of $X$} & \textbf{is MDS} \\
    \midrule
    \endhead
    \bottomrule
    \endfoot
    23 &
    $\begin{bmatrix*}[r]
      1&0&0&-1\\
      0&1&0&-1\\
      0&0&1&-1
    \end{bmatrix*}$ &
    $\mathbb{P}^3$ &
    $\begin{pmatrix*}[r]4\end{pmatrix*}$ &
    Yes \\[20pt]
    \midrule
    7 &
    $\begin{bmatrix*}[r]
      1&0&0&-1&0\\
      0&1&0&-1&0\\
      0&0&1& 2&-1
    \end{bmatrix*}$ &
    $\mathbb{P}\!\bigl(\mathcal{O}_{\mathbb{P}^{2}}
      \oplus \mathcal{O}_{\mathbb{P}^{2}}(2)\bigr)$ &
    $\begin{pmatrix*}[r]2&1\\1&-2\end{pmatrix*}$ &
    Yes \\[20pt]
    19 &
    $\begin{bmatrix*}[r]
      1&0&0&-1&0\\
      0&1&0& 0&-1\\
      0&0&1& 1&-1
    \end{bmatrix*}$ &
    $\mathbb{P}\!\bigl(\mathcal{O}_{\mathbb{P}^{1}}
      \oplus \mathcal{O}_{\mathbb{P}^{1}}
      \oplus \mathcal{O}_{\mathbb{P}^{1}}(1)\bigr)$ &
    $\begin{pmatrix*}[r]0&3\\3&-2\end{pmatrix*}$ &
    Yes \\[20pt]
    20 &
    $\begin{bmatrix*}[r]
      1&0&0&-1&0\\
      0&1&0& 0&-1\\
      0&0&1& 1&-1
    \end{bmatrix*}$ &
    $\mathbb{P}\!\bigl(\mathcal{O}_{\mathbb{P}^{2}}
      \oplus \mathcal{O}_{\mathbb{P}^{2}}(1)\bigr)$ &
    $\begin{pmatrix*}[r]2&2\\2&-2\end{pmatrix*}$ &
    Yes \\[20pt]
    22 &
    $\begin{bmatrix*}[r]
      1&0&0&-1&0\\
      0&1&0& 0&-1\\
      0&0&1& 0&-1
    \end{bmatrix*}$ &
    $\mathbb{P}^{2}\!\times\!\mathbb{P}^{1}$ &
    $\begin{pmatrix*}[r]0&3\\3&2\end{pmatrix*}$ &
    Yes \\[20pt]
    \midrule
    6 &
    $\begin{bmatrix*}[r]
      1&0&0&-1&0&0\\
      0&1&0&-1&1&0\\
      0&0&1& 2&-1&-1
    \end{bmatrix*}$ &
    $F_{1}^{3}$ &
    $A_{2}\oplus(4)$ &
    Yes \\[20pt]
    11 &
    $\begin{bmatrix*}[r]
      1&0&0&-1&0&0\\
      0&1&0& 0&-1&0\\
      0&0&1& 1& 1&-1
    \end{bmatrix*}$ &
    $\mathbb{P}\!\bigl(\mathcal{O}_{\mathbb{P}^{1}\!\times\!\mathbb{P}^{1}}
      \oplus \mathcal{O}_{\mathbb{P}^{1}\!\times\!\mathbb{P}^{1}}(1,1)\bigr)$ &
    $U\oplus(-12)$ &
    No \\[20pt]
    12 &
    $\begin{bmatrix*}[r]
      1&0&0&-1&0&0\\
      0&1&0& 0& 1&-1\\
      0&0&1& 1&-1& 0
    \end{bmatrix*}$ &
    $\mathbb{P}\!\bigl(\mathcal{O}_{dP_{8}}
      \oplus \mathcal{O}_{dP_{8}}(l)\bigr)$ &
    $U\oplus(-14)$ &
    No \\[20pt]
    16 &
    $\begin{bmatrix*}[r]
      1&0&0&-1& 1&-1\\
      0&1&0& 0& 0&-1\\
      0&0&1& 1&-1& 0
    \end{bmatrix*}$ &
    $F_{2}^{3}$ &
    $A_{1}\oplus\!\begin{pmatrix*}[r]0&3\\3&-2\end{pmatrix*}$ &
    Yes \\[20pt]
    17 &
    $\begin{bmatrix*}[r]
      1&0&0&-1&0&0\\
      0&1&0& 0&-1&0\\
      0&0&1& 1& 0&-1
    \end{bmatrix*}$ &
    $dP_{8}\!\times\!\mathbb{P}^{1}$ &
    $U\oplus(-16)$ &
    No \\[20pt]
    18 &
    $\begin{bmatrix*}[r]
      1&0&0&-1&0&0\\
      0&1&0& 0&0&-1\\
      0&0&1& 1&-1&-1
    \end{bmatrix*}$ &
    $\mathbb{P}\!\bigl(\mathcal{O}_{\mathbb{P}^{1}\!\times\!\mathbb{P}^{1}}
      \oplus \mathcal{O}_{\mathbb{P}^{1}\!\times\!\mathbb{P}^{1}}(1,-1)\bigr)$ &
    $U\oplus(-20)$ &
    No \\[20pt]
    21 &
    $\begin{bmatrix*}[r]
      1&0&0&-1&0&0\\
      0&1&0& 0&-1&0\\
      0&0&1& 0& 0&-1
    \end{bmatrix*}$ &
    $\mathbb{P}^{1}\!\times\!\mathbb{P}^{1}\!\times\!\mathbb{P}^{1}$ &
    $U(2)\oplus(-4)$ &
    No \\[20pt]
    \midrule
    8 &
    $\begin{bmatrix*}[r]
      1&0&0&-1&0&0&0\\
      0&1&0& 0&-1& 1&-1\\
      0&0&1& 1& 1&-1& 0
    \end{bmatrix*}$ &
    $F_{3}^{4}$ &
    $U\oplus\!\begin{pmatrix*}[r]-2&1\\1&-6\end{pmatrix*}$ &
    No \\[20pt]
    10 &
    $\begin{bmatrix*}[r]
      1&0&0&-1&0&0&0\\
      0&1&0& 0&-1& 1& 0\\
      0&0&1& 1& 1&-1&-1
    \end{bmatrix*}$ &
    $F_{2}^{4}$ &
    $U\oplus\!\begin{pmatrix*}[r]-4&2\\2&-6\end{pmatrix*}$ &
    No \\[20pt]
    13 &
    $\begin{bmatrix*}[r]
      1&0&0&-1&0&0&0\\
      0&1&0& 0& 1&-1& 0\\
      0&0&1& 1&-1& 0&-1
    \end{bmatrix*}$ &
    $F_{1}^{4}$ &
    $U\oplus\!\begin{pmatrix*}[r]-4&1\\1&-8\end{pmatrix*}$ &
    No \\[20pt]
    14 &
    $\begin{bmatrix*}[r]
      1&0&0&-1& 1&-1&0\\
      0&1&0& 0& 0& 0&-1\\
      0&0&1& 1&-1& 0&0
    \end{bmatrix*}$ &
    $dP_{7}\!\times\!\mathbb{P}^{1}$ &
    $U\oplus\!\begin{pmatrix*}[r]-4&2\\2&-8\end{pmatrix*}$ &
    No \\[20pt]
    \midrule
    9 &
    $\begin{bmatrix*}[r]
      1&0&0&-1&0&0&0&0\\
      0&1&0& 0&-1& 1&-1&0\\
      0&0&1& 1& 1&-1&0&-1
    \end{bmatrix*}$ &
    $F_{1}^{5}$ &
    $U\oplus\!\begin{pmatrix*}[r]
        -4& 1& 2\\
         1&-4& 0\\
         2& 0&-4
      \end{pmatrix*}$ &
    No \\[20pt]
    15 &
    $\begin{bmatrix*}[r]
      1&0&0&-1& 1&-1&0&0\\
      0&1&0& 0& 0& 0&-1&0\\
      0&0&1& 1&-1& 0&0&-1
    \end{bmatrix*}$ &
    $dP_{6}\!\times\!\mathbb{P}^{1}$ &
    $U\oplus\!\bigl(\begin{smallmatrix}-4&2\\2&-4\end{smallmatrix}\bigr)\!\oplus\!(-4)$ &
    No \\[20pt]
\end{longtable}
}


\subsection{Examples}
We now discuss exemplarily two cases: one Mori dream and another with infinite automorphism group.

\begin{example}
Let $\mathbb P_\Delta$ be the smooth toric fano threefold defined by the polytope
of index $7$ in the Graded Ring Database.
The Cox ring of $\mathbb P_\Delta$ is $\mathbb{K}[x_1,\dots,x_5]$ with grading matrix
\[
\begin{bmatrix*}[r]
    0 & 0 & 1 & 0 & 1\\
    1 & 1 & 0 & 1 & 2
    \end{bmatrix*}.
\]
Moreover ${\rm Eff}(\mathbb P_\Delta)={\rm Cone}(e_1,e_2)$ and 
${\rm Nef}(X)={\rm Cone}(e_1+2e_2,e_2)$
Observe that the map induced by $(x_1,\dots,x_5)\mapsto (x_1,x_2,x_4)$ is the natural projection $\pi:\mathbb P_\Delta\to \mathbb P^2$.
The equation of an anticanonical hypersurface is of the form
\[
f_0(x_1,x_2,x_4)x_3^2-f_1(x_1,x_2,x_4)x_3x_5+f_2(x_1,x_2,x_4)x_5^2=0,
\]
where $f_0$ has degree $5e_2$, $f_1$ has degree $3e_2$ and $f_2$ has degree $e_2$.
The restriction of   $\pi$ to a general hypersurface $X$ in this linear system defines a double cover $X\to \mathbb P^2$ branched along the smooth sextic curve  of equation 
\[
f_1^2-4f_0f_2=0.
\]
The line in $\mathbb P^2$ defined by $f_2=0$ is a $3$-tangent line to $B$ which splits in the double cover, giving two $(-2)$-curves $R_1, R_2$ of $S$ intersecting at three points, defined by $x_3=0$ and $f_2=f_0x_3-f_1x_5=0$. 
The classes $h:=i^*(e_2)$ and $e:=i^*(e_1)$  generate a sublattice of ${\rm Cl}(X)$  isometric to the lattice given in the last column of the table. The classes of the two $(-2)$-curves, which generate the effective cone of $S$, are $[R_1]=e$ and $[R_2]=h-e$.
The preimage in $X$ of the plane quintic defined by $f_0=0$ also splits in the union of two genus six curves $C_1,C_2$ defined by $x_5=0$ and $f_0=f_1x_3+f_2x_5=0$, whose classes in ${\rm Cl}(X)$ are $[C_1]=2h+e$ and $[C_2]=3h-e$.
 
By Theorem \ref{thm:1}, since the ambient toric variety has a unique primitive collection of cardinality two with $d=2$, defined by $x_3=x_5=0$, the Cox ring of $X$ is finitely generated and given by
\[
\mathcal R(X)=\frac{\mathbb{K}[x_1,\dots,x_5,s_1,s_2]}{\langle f_0+x_5s_2,f_1+x_3s_2+x_5s_1, f_2+x_3s_1  \rangle},
\]
where $x_1,\dots,x_5$ are the restrictions of the generators of $\mathcal R(\mathbb P)$, 
$s_1$ defines the $(-2)$-curve $R_2$  and $s_2$ defines the genus six curve $C_2$.
\end{example}

\begin{example}
Let \(\mathbb P_\Delta\) be the smooth toric Fano threefold corresponding to
the polytope labelled \(11\) in the Graded Ring Database. Its Cox ring is
\(\mathbb K[x_1,\ldots,x_6]\), with grading matrix
\[
\begin{bmatrix*}[r]
    0 & 0 & 1 & 0 & 0 & 1\\
    0 & 1 & 0 & 0 & 1 & 1\\
    1 & 0 & 0 & 1 & 0 & 1
\end{bmatrix*}.
\]
Thus \(\deg x_1=\deg x_4=e_3\), \(\deg x_2=\deg x_5=e_2\),
\(\deg x_3=e_1\), and \(\deg x_6=e_1+e_2+e_3\). The irrelevant ideal is
\((x_1,x_4)\cap (x_2,x_5)\cap (x_3,x_6)\). Moreover,
\(\operatorname{Eff}(\mathbb P_\Delta)=\operatorname{Cone}(e_1,e_2,e_3)\),
whereas
\(\operatorname{Nef}(\mathbb P_\Delta)=
\operatorname{Cone}(e_2,e_3,e_1+e_2+e_3)\).

The projection induced by
\((x_1,\ldots,x_6)\mapsto (x_1,x_4,x_2,x_5)\) defines a morphism
\(\pi:\mathbb P_\Delta\to \mathbb P^1\times \mathbb P^1\). Since
\(-K_{\mathbb P_\Delta}=2e_1+3e_2+3e_3\), an anticanonical hypersurface has
equation
\[
f_0x_3^2+f_1x_3x_6+f_2x_6^2=0,
\]
where \(f_0,f_1,f_2\) have degrees \((0,3,3)\), \((0,2,2)\), and \((0,1,1)\),
respectively. For a general choice of \(f_i\), the restriction
\(\pi_{|X}:X\to \mathbb P^1\times \mathbb P^1\) is a double cover branched
along the smooth curve of bidegree \((4,4)\) defined by \(f_1^2-4f_0f_2=0\).

The two projections \(p_i\circ \pi\) restrict to elliptic fibrations
\(\alpha_i:X\to \mathbb P^1\). We denote by \(F_i\) the class of a fibre of
\(\alpha_i\), with the convention \(F_1=e_2\) and \(F_2=e_3\). The curve
\(S=X\cap\{x_3=0\}\) is a smooth rational curve and is a section of both
elliptic fibrations. For \(X\) very general in this family, we have
\(\operatorname{Pic}(X)=\mathbb Z e_1\oplus \mathbb Z e_2\oplus \mathbb Z e_3\),
where \(e_1=[S]\). With respect to this basis, the intersection form is
\[
Q=
\begin{pmatrix}
-2 & 1 & 1\\
1 & 0 & 2\\
1 & 2 & 0
\end{pmatrix}.
\]
The form \(Q\) is equivalent to \(U\oplus \langle -12\rangle\). Indeed, the
change of basis
\((e_1,e_2,e_3)\mapsto (e_2,e_1+e_2,2e_1+5e_2-e_3)\) gives the standard
hyperbolic plane on the first two generators and a vector of square \(-12\)
orthogonal to it. In particular,
\(A_{\operatorname{Pic}(X)}\simeq \mathbb Z/12\mathbb Z\).

Let \(\sigma\) be the involution associated with the double cover
\(\pi_{|X}\), and let \(\iota_i\) be the involution induced by fibrewise
inversion on \(\alpha_i\), with zero section \(S\). Their actions on
\(\operatorname{Pic}(X)\), with respect to the basis \(e_1,e_2,e_3\), are
\[
\sigma^*=
\begin{pmatrix}
-1 & 0 & 0\\
1 & 1 & 0\\
1 & 0 & 1
\end{pmatrix},
\qquad
\iota_1^*=
\begin{pmatrix}
1 & 0 & 4\\
0 & 1 & 10\\
0 & 0 & -1
\end{pmatrix},
\qquad
\iota_2^*=
\begin{pmatrix}
1 & 4 & 0\\
0 & -1 & 0\\
0 & 10 & 1
\end{pmatrix}.
\]
Let \(G=\langle \sigma,\iota_1,\iota_2\rangle\). We will prove that
\(\operatorname{Aut}(X)=G\).

We first determine a fundamental region for the action of \(G\) on the nef cone.
Let \(D=xe_1+ye_2+ze_3\). Then
\(D^2=-2x^2+2xy+2xz+4yz\). We take the component \(\mathcal C^+\) of the
positive cone containing the nef cone. Since \(h=e_1+e_2+e_3\) is nef and
\(D\cdot h=3(y+z)\), this component is characterized by \(D^2>0\) and
\(y+z>0\).

The fixed hyperplanes of \(\sigma^*,\iota_2^*,\iota_1^*\) are respectively
\(x=0\), \(y=0\), and \(z=0\). Consider the chamber
\[
\Delta=\{D\in \mathcal C^+:x\geq 0,\ y\geq 0,\ z\geq 0\}.
\]
The ping-pong argument \cite[Ch.II.B]{pingpong} applies to the three subsets defined by \(x<0\),
\(y<0\), and \(z<0\). These subsets are pairwise disjoint inside
\(\mathcal C^+\): for instance, \(y\leq 0\) and \(z\leq 0\) contradict
\(y+z>0\), while the cases involving \(x\leq 0\) give \(D^2\leq 0\). Moreover,
each involution sends the union of the other two subsets into its own subset.
Therefore
\[
\langle \sigma^*,\iota_1^*,\iota_2^*\rangle
\simeq
\mathbb Z/2\mathbb Z*
\mathbb Z/2\mathbb Z*
\mathbb Z/2\mathbb Z.
\]

The chamber \(\Delta\) is a fundamental chamber for the action of \(G\) on the
positive cone. Indeed, if an integral class \(D\in\mathcal C^+\) has a negative
coordinate, applying the corresponding involution strictly decreases the
positive integer \(D\cdot h\). Thus the process terminates and moves \(D\) into
\(\Delta\).

Intersecting with the nef cone gives the following fundamental region for the
action of \(G\) on \(\operatorname{Nef}(X)\):
\[
\Pi=
\Delta\cap \operatorname{Nef}(X)
=
\{xe_1+ye_2+ze_3:x,y,z\geq 0,\ y+z\geq 2x\}.
\]
Indeed, inside \(\Delta\), nefness imposes \(D\cdot S=D\cdot e_1=-2x+y+z\geq 0\).
Conversely, the extremal rays of the cone on the right are
\(e_2,e_3,e_1+2e_2,e_1+2e_3\), and all these classes are nef.

\begin{center}
\begin{tikzpicture}[scale=2.3, line join=round, line cap=round]

  \fill[gray!10] (0,0) -- (1,0) -- (0,1) -- cycle;

  \fill[yellow!20]
    (0,1/2) --
    (0,1) --
    (1,0) --
    (1/2,0) --
    plot[domain=0.5:0, samples=160, smooth]
      (\x,{(3-2*\x - sqrt(1+12*\x-12*\x*\x))/4})
    -- cycle;

  \draw[orange!85!black, thick]
    plot[domain=0:0.5, samples=160, smooth]
      (\x,{(3-2*\x - sqrt(1+12*\x-12*\x*\x))/4});

  \fill[blue!18]
    (1,0) -- (2/3,0) -- (0,2/3) -- (0,1) -- cycle;

  \draw[blue!70!black, very thick]
    (1,0) -- (2/3,0) -- (0,2/3) -- (0,1) -- cycle;

  \draw[red!75!black, dashed, thick]
    (2/3,0) -- (0,2/3);

  \draw[black, thick]
    (0,0) -- (1,0) -- (0,1) -- cycle;

  \fill[orange!85!black] (1/2,0) circle (0.5pt);
  \fill[orange!85!black] (0,1/2) circle (0.5pt);

  \fill[black] (1/3,1/3) circle (0.5pt);
  \node[right] at (1/3-0.03,1/3+0.05) {\footnotesize{$h$}};

  \fill[blue!70!black] (2/3,0) circle (0.5pt);
  \fill[blue!70!black] (0,2/3) circle (0.5pt);

  \fill (0,0) circle (0.5pt);
  \fill[orange!85!black] (1,0) circle (0.5pt);
  \fill[orange!85!black] (0,1) circle (0.5pt);

  \node[left] at (0,0) {$e_1$};
  \node[right] at (1,0) {$e_2$};
  \node[left] at (0,1) {$e_3$};

  \node[black!70!black, align=center] at (0.3,0.55)
    {\footnotesize{$\Pi$}};
\end{tikzpicture}
\end{center}

We now choose a distinguished class in \(\Pi\). We claim that
\(h=e_1+e_2+e_3\) is the unique integral class \(D\in\Pi\) satisfying
\(D^2=6\) and \(\operatorname{div}(D)=3\), where
\(\operatorname{div}(D)=\gcd\{D\cdot L:L\in\operatorname{Pic}(X)\}\).
Indeed, if \(D=xe_1+ye_2+ze_3\in\Pi\) and \(D^2=6\), then
\(-x^2+x(y+z)+2yz=3\). Since \(y+z\geq 2x\), one gets \(x^2\leq 3\), hence
\(x=1\). The equation becomes \(y+z+2yz=4\), whose non-negative integral
solutions are \((y,z)=(4,0),(1,1),(0,4)\). The corresponding classes are
\(e_1+4e_2\), \(h\), and \(e_1+4e_3\), and only \(h\) has divisibility \(3\).

We now analyze the possible automorphisms fixing \(h\). Observe that
\(h^\perp=\langle e_1,e_2-e_3\rangle\), with diagonal form
\(\operatorname{diag}(-2,-4)\). Therefore \(\pm e_1\) are the only classes of
self-intersection \(-2\) in \(h^\perp\). Thus any isometry fixing \(h\) maps
\(e_1\) to \(\pm e_1\). If the isometry is induced by an automorphism of \(X\),
then \(e_1=[S]\) must be mapped to an effective class. Hence \(e_1\) is mapped
to \(e_1\), since \(-e_1\) is not effective.

An explicit computation shows that the only isometries of
\(\operatorname{Pic}(X)\) fixing both \(h\) and \(e_1\) are the identity and the
involution exchanging \(e_2\) and \(e_3\), namely
\[
\tau=
\begin{pmatrix}
1 & 0 & 0\\
0 & 0 & 1\\
0 & 1 & 0
\end{pmatrix}.
\]
We now show that \(\tau\) is not induced by an automorphism of \(X\). A generator
of the discriminant group \(A_{\operatorname{Pic}(X)}\simeq \mathbb Z/12\mathbb Z\)
is represented by \(\gamma=(2e_1-e_2+5e_3)/12\), and one checks that
\(\tau(\gamma)\equiv 7\gamma\). Thus \(\tau\) does not act as
\(\pm\operatorname{id}\) on the discriminant group.

On the other hand, since \(\operatorname{rk} T(X)=19\) is odd, the action of an
automorphism of \(X\) on \(T(X)\) is \(\pm\operatorname{id}\) by \cite{ni}.
The actions on the discriminant groups of \(\operatorname{Pic}(X)\) and \(T(X)\)
must be compatible. Hence the action induced on \(A_{\operatorname{Pic}(X)}\)
must also be \(\pm\operatorname{id}\), which excludes \(\tau\).

We also use that the natural representation
\(\operatorname{Aut}(X)\to \operatorname{O}(\operatorname{Pic}(X))\) is
injective. Indeed, if an automorphism acts trivially on \(\operatorname{Pic}(X)\),
then it acts on \(T(X)\) as \(\pm\operatorname{id}\). The case
\(+\operatorname{id}\) gives the identity by the global Torelli theorem. The
case \(-\operatorname{id}\) would force \(A_{T(X)}\) to be \(2\)-elementary,
contradicting
\(A_{T(X)}\simeq A_{\operatorname{Pic}(X)}\simeq \mathbb Z/12\mathbb Z\).

We can now prove that \(\operatorname{Aut}(X)=G\). Let
\(f\in\operatorname{Aut}(X)\). Then \(f^*h\) is nef, integral, has square \(6\),
and has divisibility \(3\). Since \(\Pi\) is a fundamental region for the action
of \(G\) on the nef cone, there exists \(g\in G\) such that
\((f\circ g)^*h\in\Pi\). By the uniqueness of \(h\) inside \(\Pi\), we get
\((f\circ g)^*h=h\). By the previous paragraph, the action of \(f\circ g\) on
\(\operatorname{Pic}(X)\) must be trivial. By injectivity,
\(f\circ g=\operatorname{id}\), and therefore \(f\in G\). Hence
\[
\operatorname{Aut}(X)
=
G
\simeq
\mathbb Z/2\mathbb Z*
\mathbb Z/2\mathbb Z*
\mathbb Z/2\mathbb Z.
\]
\end{example}

\begin{remark}
The arguments in the previous example are inspired by the proof of \cite[Theorem 1.7]{oguiso}, where Oguiso computes the automorphism group of a very general smooth quartic surface containing two disjoint lines. These K3 surfaces admit an alternative realization as anticanonical hypersurfaces of the smooth toric Fano threefold associated with the polytope 18. 
\end{remark}

\appendix

\section*{Appendix A. Data for the remaining fourfold cases}
\addcontentsline{toc}{section}{Appendix A. Data for the remaining fourfold cases}
\label{app:A}

In this appendix we collect, for each of the smooth toric Fano fourfolds with index
\[
i\in \{33,\allowbreak 34,\allowbreak 35,\allowbreak 38,\allowbreak 54,\allowbreak 93,\allowbreak 94,\allowbreak 104,\allowbreak 110,\allowbreak 132,\allowbreak 133\},
\]
the concrete data needed to carry out the argument used in the proof of Theorem~\ref{cor:1}. More precisely, the table records the Chow ring $A^*(\mathbb P_\Delta)$ of the ambient toric variety, the generators of the cone $\mathcal C \subseteq {\rm Eff}(X)$ for a general smooth anticanonical hypersurface $X\subseteq \mathbb P_{\Delta}$, and suitable divisor classes $D\in {\rm Mov}(\mathbb P_\Delta)$ that allow one to test all facets of $\mathcal C$ via Algorithm~\ref{alg}.

\vspace{5mm}


{\footnotesize
\setlength{\parindent}{0pt}
\setlength{\tabcolsep}{9pt}
\renewcommand{\arraystretch}{1.25}

\begin{longtable}{@{}r l p{0.78\linewidth}@{}}
\midrule

33 & $A^*(\mathbb{P}_\Delta)$ &
$\displaystyle
\frac{\mathbb{Q}[D_1,\dots,D_8]}{
\left(
\begin{array}{l}
D_1-D_5,\; D_2-D_5+D_6+D_7,\; D_3-D_6,\; D_4+2D_5-D_7-D_8,\\
D_2D_8,\; D_3D_6,\; D_4D_7,\; D_4D_8,\; D_1D_2D_5,\; D_1D_5D_7
\end{array}
\right)}$\\[2mm]
 & $\mathrm{Eff}(X)$ &
$\operatorname{Cone}\big(3D_5+D_6-D_7-D_8,\; D_2,\; D_6,\; D_7,\; D_4\big)$\\
 & $D$ &
$\{\,D_6,\; D_5\,\}$\\
\addlinespace[3mm]
\midrule
34 & $A^*(\mathbb{P}_\Delta)$ &
$\displaystyle
\frac{\mathbb{Q}[D_1,\dots,D_8]}{
\left(
\begin{array}{l}
D_1-D_5+D_7,\; D_2-D_5+D_6,\; D_3-D_6,\; D_4+2D_5-D_7-D_8,\\
D_1D_8,\; D_3D_6,\; D_4D_7,\; D_4D_8,\; D_1D_2D_5,\; D_2D_5D_7
\end{array}
\right)}$\\[2mm]
 & $\mathrm{Eff}(X)$ &
$\operatorname{Cone}\big(3D_5+D_6-D_7-D_8,\; D_1,\; D_2,\; D_6,\; D_7,\; D_4\big)$\\
 & $D$ &
$\{\,D_6,\; D_2\,\}$\\
\addlinespace[3mm]
\midrule
35 & $A^*(\mathbb{P}_\Delta)$ &
$\displaystyle
\frac{\mathbb{Q}[D_1,\dots,D_7]}{
\left(
\begin{array}{l}
D_1-D_5,\; D_2-D_5+D_6,\; D_3-D_6,\; D_4+2D_5-D_7,\;
D_3D_6,\; D_4D_7,\; D_1D_2D_5
\end{array}
\right)}$\\[2mm]
 & $\mathrm{Eff}(X)$ &
$\operatorname{Cone}\big(
D_2,\; D_3,\; D_4,\; D_2+2D_3-D_4\big)$\\
 & $D$ &
$\{\,D_5,\; D_6\,\}$\\
\addlinespace[3mm]
\midrule
38 & $A^*(\mathbb{P}_\Delta)$ &
$\displaystyle
\frac{\mathbb{Q}[D_1,\dots,D_8]}{
\left(
\begin{array}{l}
D_1-D_5,\; D_2-D_5+D_6+D_8,\; D_3-D_8,\; D_4+2D_5-D_6-D_7-D_8,\\
D_2D_7,\; D_3D_8,\; D_4D_6,\; D_4D_7,\; D_1D_2D_5,\; D_1D_5D_6
\end{array}
\right)}$\\[2mm]
 & $\mathrm{Eff}(X)$ &
$\operatorname{Cone}\big(3D_5-D_6-D_7+D_8,\; D_2,\; D_8,\; D_6,\; D_4\big)$\\
 & $D$ &
$\{\,D_8,\; D_5\,\}$\\
\addlinespace[3mm]
\midrule
54 & $A^*(\mathbb{P}_\Delta)$ &
$\displaystyle
\frac{\mathbb{Q}[D_1,\dots,D_9]}{
\left(
\begin{array}{l}
D_1-D_5+D_7-D_8,\; D_2-D_5+D_6-D_8,\; D_3+D_5-D_6-D_9,\; D_4+D_5-D_7-D_9,\\
D_3D_6,\; D_4D_7,\; D_5D_9,\; D_1D_2D_5,\; D_1D_2D_8,\; D_1D_2D_9,\\
D_1D_3D_8,\; D_1D_3D_9,\; D_1D_5D_6,\; D_1D_6D_8,\; D_2D_4D_8,\; D_2D_4D_9,\\
D_2D_5D_7,\; D_2D_7D_8,\; D_3D_4D_8,\; D_3D_4D_9,\; D_5D_6D_7,\; D_6D_7D_8
\end{array}
\right)}$\\[2mm]
 & $\mathrm{Eff}(X)$ &
$\operatorname{Cone}\big(D_1,\;D_2,\;D_3,\;D_4,\;D_5,\;D_6,\;D_7,\;D_8,\;D_9\big)$\\
 & $D$ &
$\{\,D_7+D_9,\; D_4+D_6,\; D_8+D_9,\; D_5+D_8,\; D_6+D_9\,\}$\\
\addlinespace[3mm]
\midrule
93 & $A^*(\mathbb{P}_\Delta)$ &
$\displaystyle
\frac{\mathbb{Q}[D_1,\dots,D_8]}{
\left(
\begin{array}{l}
D_1-D_5,\; D_2-D_6+D_7-D_8,\; D_3-D_8,\; D_4+D_5+D_6-D_7,\\
D_1D_5,\; D_2D_6,\; D_4D_7,\; D_6D_7,\; D_2D_3D_8,\; D_3D_4D_8
\end{array}
\right)}$\\[2mm]
 & $\mathrm{Eff}(X)$ &
$\operatorname{Cone}\big(D_5-D_6+2D_8,\; D_5,\; D_2,\; D_4,\; D_6\big)$\\
 & $D$ &
$\{\,D_5,\; D_8\,\}$\\
\addlinespace[3mm]
\midrule
94 & $A^*(\mathbb{P}_\Delta)$ &
$\displaystyle
\frac{\mathbb{Q}[D_1,\dots,D_8]}{
\left(
\begin{array}{l}
D_1-D_5,\; D_2-D_6+D_7,\; D_3-D_8,\; D_4+D_5+D_6-D_7-D_8,\\
D_1D_5,\; D_2D_6,\; D_4D_7,\; D_6D_7,\; D_2D_3D_8,\; D_3D_4D_8
\end{array}
\right)}$\\[2mm]
 & $\mathrm{Eff}(X)$ &
$\operatorname{Cone}\big(2D_5-D_7+2D_8,\; D_5,\; D_2,\; D_4,\; D_7\big)$\\
 & $D$ &
$\{\,D_5-D_6+3D_8,\; D_5,\; D_2\,\}$\\
\addlinespace[3mm]
\midrule
104 & $A^*(\mathbb{P}_\Delta)$ &
$\displaystyle
\frac{\mathbb{Q}[D_1,\dots,D_8]}{
\left(
\begin{array}{l}
D_1-D_5,\; D_2-D_6+D_7-D_8,\; D_3+D_6-D_7,\; D_4+D_5-D_8,\\
D_1D_5,\; D_2D_6,\; D_3D_7,\; D_6D_7,\; D_2D_4D_8,\; D_3D_4D_8
\end{array}
\right)}$\\[2mm]
 & $\mathrm{Eff}(X)$ &
$\operatorname{Cone}\big(D_5-D_6+2D_8,\; D_5,\; D_2,\; D_3,\; D_4,\; D_6\big)$\\
 & $D$ &
$\{\,D_6+D_8,\; D_5,\; D_5-D_7+3D_8,\; D_7\,\}$\\
\addlinespace[3mm]
\midrule
110 & $A^*(\mathbb{P}_\Delta)$ &
$\displaystyle
\frac{\mathbb{Q}[D_1,\dots,D_8]}{
\left(
\begin{array}{l}
D_1-D_5+D_7-D_8,\; D_2-D_8,\; D_3+D_6-D_7,\; D_4+D_5-D_6,\\
D_1D_5,\; D_3D_7,\; D_4D_6,\; D_1D_2D_8,\; D_2D_3D_8,\; D_2D_4D_8,\; D_5D_6D_7
\end{array}
\right)}$\\[2mm]
 & $\mathrm{Eff}(X)$ &
$\operatorname{Cone}\big(D_1,\; D_3,\; D_4,\; D_5\big)$\\
 & $D$ &
$\{\,D_8,\; D_6\,\}$\\
\addlinespace[3mm]
\midrule
132 & $A^*(\mathbb{P}_\Delta)$ &
$\displaystyle
\frac{\mathbb{Q}[D_1,\dots,D_7]}{
\left(
\begin{array}{l}
D_1-D_5,\; D_2-D_6+D_7,\; D_3-D_7,\; D_4+D_5-D_7,\; 
D_1D_5,\; D_2D_6,\; D_3D_4D_7
\end{array}
\right)}$\\[2mm]
 & $\mathrm{Eff}(X)$ &
$\operatorname{Cone}\big(D_5,\; D_2,\; D_4,\; D_5-D_6+3D_7\big)$\\
 & $D$ &
$\{\,D_5,\; D_4\,\}$\\
\addlinespace[3mm]
\midrule
133 & $A^*(\mathbb{P}_\Delta)$ &
$\displaystyle
\frac{\mathbb{Q}[D_1,\dots,D_7]}{
\left(
\begin{array}{l}
D_1-D_5,\; D_2-D_6,\; D_3-D_7,\; D_4+D_5-D_7,\;
D_1D_5,\; D_2D_6,\; D_3D_4D_7
\end{array}
\right)}$\\[2mm]
 & $\mathrm{Eff}(X)$ & 
 $\operatorname{Cone}\big(D_5-D_6+3D_7,\;
 D_4,\; D_5,\; D_6\big)$\\
 & $D$ & 
 $\{\,D_6,\; D_5-D_6+3D_7\,\}$\\

\bottomrule
\end{longtable}
}


\bibliographystyle{plain}
\bibliography{ref.bib}

\end{document}